\documentclass[11pt]{article}

\usepackage[a4paper,left=1.05in,right=1.05in,top=0.85in,bottom=0.9in]{geometry} 
\usepackage[T1]{fontenc}
\usepackage{lmodern}
\usepackage{amsmath,amssymb}
\usepackage{microtype}
\usepackage{setspace}
\usepackage{cite}
\usepackage{graphicx} 
\usepackage{amsmath,amsthm}
\usepackage{graphicx}
\usepackage{subcaption}
\usepackage{multirow}
\usepackage{graphicx}
\usepackage[colorlinks=true, linkcolor=blue]{hyperref}
\usepackage{cleveref}
\usepackage{float}
\usepackage{amssymb}
\newcounter{figthree}
\renewcommand{\thefigthree}{3.\arabic{figthree}}
\newcounter{figfour}
\renewcommand{\thefigfour}{4.\arabic{figfour}}
\newcounter{figseven}
\renewcommand{\thefigseven}{7.\arabic{figseven}}
\newcounter{fignine}
\renewcommand{\thefignine}{9.\arabic{fignine}}
\newtheorem{theorem}{Theorem}[section]
\newtheorem{lemma}{Lemma}[section]

\newcommand{\Tat}[1]{{\color{blue}#1}}

\title{ An Adaptive and Physics-Preserving Multiscale Method for Two-Phase Flow Simulations in High-Contrast Heterogeneous Porous Media}

\author{Junhao Huang \qquad Eric Chung \qquad Wing Tat Leung}
\date{\today}

\begin{document}
 \vspace{-1.2em}
 \maketitle

\begin{abstract}
In this paper, we propose an adaptive physics-preserving multiscale method for incompressible and immiscible two-phase flow in high-contrast porous media. The method couples a physics-preserving implicit-pressure explicit-saturation scheme (P-IMPES) with the mixed constraint energy minimizing generalized multiscale finite element method. The core algorithmic component is an adaptive update strategy for the saturation-dependent coefficient. Since the effective permeability \(\kappa_n=\lambda_t(S_w^n)K\) depends on the evolving saturation through the total mobility, we introduce an adaptive update algorithm that monitors the variation of the mobility-weighted coefficient and regenerates the multiscale spaces only when a prescribed tolerance is exceeded. A local postprocessing step is further used to recover fine-grid mass conservation.
The analysis is a central part of the paper. We prove local conservation for both phases, the unbiased property of the phase formulation, and bounds preservation under a suitable CFL condition. For the advection-dominated case, we establish velocity and saturation error estimates, which clearly identify the contributions from the adaptive tolerance, the coarse mesh size, the spectral approximation, and the front-layer error. Numerical experiments on different high-contrast permeability fields confirm the physical properties of the method and show that smaller adaptive tolerances improve the saturation approximation while avoiding unnecessary updates of the multiscale spaces.
\end{abstract}

\section{Introduction}

Numerical simulation of incompressible and immiscible two-phase flow in porous media is an important problem in petroleum reservoir engineering, hydrology, and subsurface energy applications. The governing model is derived from mass conservation, Darcy's law, the saturation constraint, and the capillary pressure relation. In realistic geological media, the absolute permeability usually contains multiple spatial scales, high-conductivity channels, inclusions, and very high contrast. These heterogeneous features strongly affect
the pressure and velocity fields, while the saturation equation may develop sharp fronts or discontinuous profiles. Direct fine-grid simulation can resolve these effects, but it is computationally expensive when the pressure-velocity system has to be solved repeatedly during a long-time two-phase flow simulation
\cite{AzizSettari1979,BrooksCorey1964,ChenHuanMa2006,MonteagudoFiroozabadi2007}.

For the temporal discretization of two-phase flow, fully implicit methods are robust but require solving large nonlinear systems at each time step. The implicit-pressure explicit-saturation (IMPES) method is more efficient since it separates the pressure solve from the saturation update
\cite{SheldonZondekCardwell1959,Coats2001CFL,ThomasThurnau1983}.
However, standard IMPES schemes may suffer from restrictive stability conditions, loss of mass conservation for one phase, phase bias, and violation of saturation bounds. In heterogeneous media with capillary pressure, saturation discontinuities across material interfaces introduce further difficulties. Several improved IMPES, discontinuous Galerkin, and capillarity-treatment strategies have been proposed to address these issues
\cite{HoteitFiroozabadi2008A,ErnMozolevskiSchuh2010,
KouSun2014,HouChenSunChen2016}.

Recently, Chen and Sun proposed a physics-preserving IMPES scheme, hereafter referred to as P-IMPES, for incompressible and immiscible two-phase flow in heterogeneous porous media \cite{ChenSun2021PIMPES}. The main idea is to rewrite the phase Darcy velocities in terms of the total velocity and an auxiliary velocity related to the capillary potential gradient. The resulting scheme is locally mass
conservative for both phases, preserves the normal continuity of the total velocity, treats the wetting and non-wetting phases in an unbiased way, and keeps the saturation within physical bounds under a suitable CFL condition. These properties are especially important because the computed velocity field is directly used in the saturation transport equation.

Besides time discretization, the spatial approximation of the pressure--velocity system is another major computational challenge. Classical upscaling methods reduce fine-scale heterogeneity to effective coarse-scale coefficients
\cite{BarkerThibeau1997,Durlofsky1991,Durlofsky1998,ChenDurlofskyGerritsenWen2003}. Multiscale finite element and multiscale finite volume methods provide an alternative reduced-order strategy by constructing coarse-scale basis functions from local fine-scale problems
\cite{HouWu1997,EfendievHouWu2000,EfendievHou2009,JennyLeeTchelepi2003}. Since the saturation equation is very sensitive to the quality of the velocity approximation, local mass conservation is essential for multiscale simulations of porous-media flow. This motivates mixed multiscale finite element methods, multiscale mortar methods, and conservative flux postprocessing techniques
\cite{ChenHou2003,Aarnes2004,AarnesKrogstadLie2006,OdsaterWheelerKvamsdalLarson2017}.

The Generalized Multiscale Finite Element Method (GMsFEM) improves the flexibility of multiscale methods by constructing multiple basis functions through local spectral decompositions
\cite{EfendievGalvisWu2011,EfendievGalvisHou2013,ChungEfendievHou2016,ChungEfendievLee2015}. The GMsFEM framework has been further developed in adaptive enrichment, oversampling, randomized and sparse model reduction, and wave and space-time applications
\cite{ChungEfendievLi2014,EfendievGalvisLiPresho2014,ChungLeung2013,
ChungEfendievLeungLi2016,ChungEfendievGibson2011,ChungEfendievLeung2014,
ChungEfendievHou2023Book}. For high-contrast media, however, important features such as high-conductivity channels may be nonlocal and cannot be captured accurately by standard localized basis functions. Related localized and operator-adapted multiscale ideas for rough-coefficient elliptic problems have also been developed in
\cite{MalqvistPeterseim2014,Owhadi2017}. To overcome the difficulty caused by high contrast and nonlocal features, the Constraint Energy Minimizing Generalized Multiscale Finite Element Method (CEM-GMsFEM) was introduced in \cite{ChungEfendievLeung2018CEM}. The method constructs an auxiliary space from local spectral problems and then computes constraint energy minimizing
basis functions in oversampling regions. By including the eigenfunctions associated with small contrast-dependent eigenvalues, CEM-GMsFEM captures channelized features and achieves contrast-independent convergence with respect to the coarse mesh size. Its mixed formulation
was developed in \cite{ChungEfendievLeung2018MixedCEM}, where auxiliary pressure basis functions are used to construct localized velocity basis functions. The resulting method provides fine-grid mass-conservative velocity approximations and first-order convergence independent of the contrast, provided that enough oversampling layers are used.

Although P-IMPES provides a physics-preserving time discretization and mixed CEM-GMsFEM provides a contrast-robust spatial discretization, their combination for two-phase flow still faces a time-dependent coefficient issue. At time level $t_n$, the coefficient in the pressure--velocity system is
$
    \kappa_n=\lambda_t(S_w^n)K,
$
where $K$ is the absolute permeability and $\lambda_t(S_w^n)$ is the total mobility depending on the current wetting-phase saturation. Hence the coefficient changes as the saturation evolves. If the multiscale spaces are constructed only from the initial coefficient and kept fixed, the resulting basis functions may no longer be well adapted to the current coefficient field, which can deteriorate the velocity approximation and the saturation front.

Compared with the P-IMPES-MsFEM method in \cite{WangChungSun2025}, which used residual-driven enrichment and conservative postprocessing and established phasewise conservation, unbiasedness, conditional bounds preservation, and velocity–saturation error relations, the present work addresses mobility-induced coefficient obsolescence with mixed CEM-GMsFEM, whose oversampled energy-minimizing bases are designed for contrast-robust approximation. Its distinct ingredients are coefficient-variation-triggered global space reconstruction, fine-cell-residual marking for postprocessing instead of source-based marking, and an error estimate separating contributions from the update tolerance, coarse mesh, spectral truncation, and front layer. The P-IMPES properties are therefore inherited and reverified, rather than claimed as new.

In this paper, we propose an adaptive mixed CEM-GMsFEM for the physics-preserving IMPES simulation of incompressible and immiscible two-phase flow in high-contrast porous media. The method couples the P-IMPES formulation with mixed CEM-GMsFEM spaces for the total velocity, auxiliary capillary velocity, and phase pressure. To reduce the error caused by coefficient variation, we propose an error indicator $\eta_n$ to monitor the saturation-induced variation of the mobility-weighted coefficient field. When $\eta_n$ exceeds a prescribed tolerance, the multiscale spaces are regenerated using the current coefficient; otherwise, the existing spaces are reused. A local postprocessing step is also applied to recover fine-grid mass conservation of the total velocity.

The rest of the paper is organized as follows. Section~\ref{sec 2} introduces
the two-phase flow model and the P-IMPES discretization. Section~\ref{sec 3} presents the mixed CEM-GMsFEM construction and the adaptive update strategy. Section~\ref{sec 4} gives the analysis of conservation, unbiased property, bounds preservation, and error estimates. Numerical results are reported in Section~\ref{sec 5}, and conclusions are given in Section~\ref{sec 6}.

\section{Preliminaries}

\label{sec 2}

In this section, we introduce the foundational mathematical model for incompressible and immiscible two-phase flow within a porous medium. We denote the wetting and non-wetting phases by subscripts $w$ and $n$, respectively. The mathematical model is derived from the conservation of mass, Darcy's law, the capillary pressure relationship, and the saturation constraint. Following the model formulation, we introduce the P-IMPES scheme, which is unbiased with regard to the two phases and satisfies the mass conservation for both phases.

\subsection{Mathematical model}

We consider a model in porous media $\Omega \subset \mathbb{R}^2$ given as follows,
\begin{align}
    \phi\frac{\partial S_\alpha}{\partial t} + \nabla \cdot \mathbf{u}_\alpha = F_\alpha, &\quad \text{in } \Omega, \quad \alpha = w, n, \nonumber \tag{2.1}
    \label{2.1}\\
    \mathbf{u}_\alpha = -\frac{k_{r\alpha}}{\mu_\alpha}\mathbf{K}\nabla p_\alpha , &\quad \text{in } \Omega, \quad \alpha = w, n, \tag{2.2} \\
    S_n + S_w = 1, &\quad \text{in } \Omega, \nonumber \\
    p_c(S_w) = p_n - p_w, &\quad \text{in } \Omega. \nonumber
\end{align}

  Here $\phi$ is the porosity of the medium, which is a constant. Let $\mathbf{K}$, $S_\alpha$, $\mathbf{u}_\alpha$ and $p_\alpha$ be the absolute permeability tensor, saturation, Darcy's velocity and pressure corresponding to phase $\alpha$. Total velocity is denoted by $\mathbf{u}_t = \mathbf{u}_w + \mathbf{u}_n$. Define $k_{r\alpha}$, $\mu_\alpha$ and $F_\alpha$ as relative permeability, viscosity and sink/source term of phase $\alpha$. Besides, $F_t = F_w + F_n$. Phase mobility is denoted by $\lambda_\alpha = \frac{k_{r\alpha}}{\mu_\alpha}$. Total mobility is further defined by $\lambda_t = \lambda_w + \lambda_n$. Then we define fractional flow functions as $f_w = \lambda_w/\lambda_t$, $f_n = \lambda_n/\lambda_t$.
  
  We let the boundary be $\Gamma := \partial \Omega$ and it can be partitioned as $\Gamma = \Gamma_D \cup \Gamma_N$, where $\Gamma_D$ and $\Gamma_N$ are Dirichlet and Neumann boundaries to solve $\mathbf{u}_\alpha$. Moreover, to solve $S_\alpha$, we let $\Gamma = \Gamma_{\mathrm{in}} \cup \Gamma_{\mathrm{out}}$, where $\Gamma_{\mathrm{in}} = \{x \in \Gamma : \mathbf{u}_t(x) \cdot \mathbf{n}(x) < 0\}$ is the inflow boundary, $\Gamma_{\mathrm{out}} = \{x \in \Gamma : \mathbf{u}_t(x) \cdot \mathbf{n}(x) \ge 0\}$ is the outflow boundary, and $\mathbf{n}$ is the unit outer normal vector to $\Gamma$. The initial and boundary conditions are $S_\alpha = S_\alpha^0$, for $t = 0$, $p_\alpha = p_\alpha^B$ on $\Gamma_D$ and $\mathbf{u}_\alpha \cdot \mathbf{n} = g_\alpha^N$ on $\Gamma_N$, where $\alpha = w, n$.

\subsection{Physics-preserving IMPES scheme (P-IMPES)}

To compute the reference solutions, we solve Darcy flow on a fine-scale mesh $\mathcal{T}_h$. We use the lowest Raviart-Thomas vector field (RT0) denoted by $V_h$ for fine scale velocity bases. Besides, we define $Q_h$ as the pressure space spanned by piecewise constant functions and each basis corresponds to a particular fine-scale element. And we use $V_h(D)$ and $Q_h(D)$ to represent $V_h$ and $Q_h$ restricted in $D$. The P-IMPES scheme results in the following equation system. \\
For any test functions $\mathbf{v} \in V_h$ and $q \in Q_h$, the scheme satisfies:
{
\begin{align}
& (\kappa_n^{-1}\mathbf{u}_t^{h,n+1}, \mathbf{v}) - (p_w^{h,n+1}, \nabla \cdot \mathbf{v}) = (\kappa_n^{-1} f_n(S_w^{h,n})\xi_c^{h,n+1}, \mathbf{v}) - \int_{\Gamma_D} p_w^B \mathbf{v} \cdot \mathbf{n}, \tag{2.3} \label{2.3}\\
& \sum_\alpha \beta_\alpha(\mathbf{u}_t^{h,n+1}, q; S_w^{h,n}) = (F_t, q), \tag{2.4} \label{2.4}\\
& \left(\phi \frac{S_\alpha^{h,n+1} - S_\alpha^{h,n}}{t_{n+1} - t_n}, q\right) + \beta_\alpha(\mathbf{u}_t^{h,n+1}, q; S_w^{h,n}) = (F_\alpha, q) + \sigma_\alpha \beta_c(\xi_c^{h,n+1}, q; S_w^{h,n}), \tag{2.5} \label{2.5}\\
& (\kappa_n^{-1} \xi_c^{h,n+1}, \mathbf{v}) = (p_c(S_w^{h,n}), \nabla \cdot \mathbf{v}) - \int_{\Gamma_D} (p_n^B - p_w^B)\mathbf{v} \cdot \mathbf{n}, \tag{2.6} \\
& (S_n^{h,n+1} + S_w^{h,n+1}, q) = (1, q), \tag{2.7} \\
& (p_n^{h,n+1} - p_w^{h,n+1}, q) = (p_c(S_w^{h,n}), q). \tag{2.8}
\end{align}
}

Define $t_i$ as the $i$-th time step in a uniform partition of $[0, T]$, where $T$ is the final time $\Delta t = t_{n+1} - t_n$ for each $n$. Set $\sigma_w = 1$ and $\sigma_n = -1$. Let $\mathbf{u}_t^{h, n} \in V_h(\Omega)$ and $\xi_c^{h, n} \in V_h(\Omega)$ be reference velocity solutions to (2.3) and (2.6) at time $t_n$. $p_w^{h, n}$ is the reference pressure of wetting phase at time $t_n$ which is the solution to (2.3). And  $p_n^{h, n}$ is defined to be the pressure of non-wetting phase at time $t_n$ and it can be solved by (2.8).

Define $\kappa_n = \lambda_t(S_w^{h, n})\mathbf{K}$. After we get the solutions for $\mathbf{u}_t^{h,n+1}$ and $\mathbf{\xi}_c^{h,n+1}$, we can update the wetting and non-wetting velocities $\mathbf{u}_w^{h,n+1}$ and $\mathbf{u}_n^{h,n+1}$ on each element as follows:
\begin{align*}
    \mathbf{u}_w^{h,n+1} &= f_w(S_w^{h,n})\mathbf{u}_t^{h,n+1} - f_w(S_w^{h,n})f_n(S_w^{h,n}){\xi}_c^{h,n+1},  \\
    \mathbf{u}_n^{h,n+1} &= f_n(S_w^{h,n})\mathbf{u}_t^{h,n+1} + f_w(S_w^{h,n})f_n(S_w^{h,n}){\xi}_c^{h,n+1}. 
\end{align*}
For $q \in Q_h$ is piecewise constant,
    \begin{align}
        \beta_\alpha(\mathbf{v}, q; S_w^h) &= \sum_{K^h \in \mathcal{T}_h} \int_{\partial K^h} f_\alpha(S_{w,\alpha}^{*,h})\mathbf{v} \cdot \mathbf{n} q, \quad \alpha = w, n, \tag{2.9} \\
        \beta_c(\mathbf{v}, q; S_w^h) &= \sum_{K^h \in \mathcal{T}_h} \int_{\partial K^h} f_n(S_{w,n}^{*,h}) f_w(S_{w,w}^{*,h})\mathbf{v} \cdot \mathbf{n} q, \tag{2.10}
    \end{align}
    where the upwind value $S_{w,\alpha}^{*,h}$ on $e \subset \partial K^h$ in the function $f_\alpha(S_{w,\alpha}^{*,h})$ is defined as follows: 
    \[
        S_{\alpha}^{*,h}|_e = 
        \begin{cases} 
            S_{\alpha}^h|_{K^h}, & \text{if } \{\mathbf{u}_{\alpha}^h \cdot \mathbf{n}\}_e \ge 0, \\ 
            S_{\alpha}^h|_{K^{h,1}}, & \text{if } \{\mathbf{u}_{\alpha}^h \cdot \mathbf{n}\}_e < 0, 
        \end{cases} 
        \qquad
        S_{w,\alpha}^{*,h} = 
        \begin{cases} 
            S_w^{*,h}, & \alpha = w, \\ 
            1 - S_n^{*,h}, & \alpha = n. 
        \end{cases}
    \]
    Here $K^h \cap K^{h,1} = e$ and $\mathbf{n}$ is an outward normal vector to $K^h$. For $e \subset \Gamma_{\mathrm{in}}$, $S_{w,\alpha}^{*,h}|_e = S_w^h|_{K^h}$. We further define $\beta_t = \beta_w + \beta_n$. At the initial time $t=0$, using the initial saturation \(S_w^0\), the numerical flux terms reduce to 
    \begin{align*}
        \beta_{\alpha}(\mathbf{v},q;S_w^{0})&=\sum_{K^h \in \mathcal{T}_h} \int_{K^h} f_{\alpha}(S_w^0)q \nabla \cdot \mathbf{v} \,  dx , \qquad\alpha=w,n, \\
        \beta_c(\mathbf{v},q;S_w^{0}) &=\sum_{K^h \in \mathcal{T}_h} \int_{K^h} f_w(S_w^0)f_n(S_w^0)q \nabla \cdot \mathbf{v} \,  dx
    \end{align*}

\section{Construction of Mixed CEM-GMsFEM }

\label{sec 3}

In this section, we will introduce the framework of the mixed CEM-GMsFEM. The multiscale finite element method consists of two steps. First, we construct a multiscale space $Q_{ms}$ for approximating the pressure. Based on the space $Q_{ms}$, we construct another multiscale space $V_{ms}$ for the velocity.

First, we introduce some notations that will be used later. Given a subset $S \subset \Omega$, we define $V_{h,0}(S) := \{v \in V_h \cap H(\operatorname{div}; S) : v \cdot \mathbf{n}_S = 0 \text{ on } \partial S\}$ and $Q_h(S) := Q_h \cap L^2(S)$. Denote $\kappa_n^H=\lambda_t(S_w^{H,n}) \mathbf{K},\ \ p_c^n=p_c(S_w^{H,n})$

Define the bilinear forms as follows:
\begin{align*}
&a_n(v,w)=\int_\Omega \kappa_n^{-1} v \cdot w \, dx, \quad a_n^H(v, w) = \int_\Omega (\kappa_n^H)^{-1} v \cdot w \, dx,\\ 
& b(v, q) = \int_\Omega q \nabla \cdot v \, dx, \quad \quad   b_D(v,p^B)=\int_{\Gamma_D} p^B v\cdot n \,ds, 
\end{align*}

Note that the pressure basis functions are locally supported on individual coarse elements. In contrast, the associated velocity basis functions are supported on oversampled regions formed by adding several coarse grid layers to the original element (see Figure 1). The localized feature of these velocity bases is the key of our approach.

\begin{figure}[htbp]
    \centering
    \includegraphics[width=0.8\textwidth]{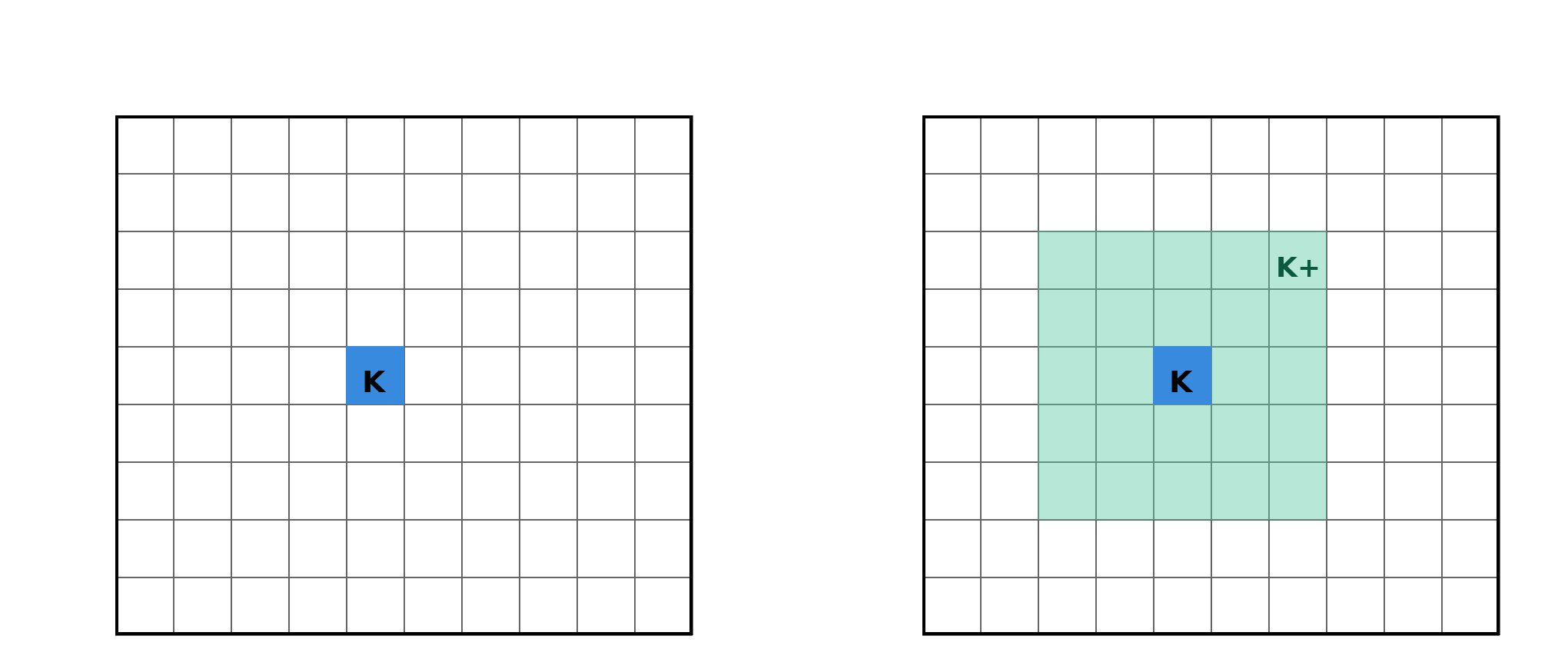}
    \caption{Left: coarse element $K$. Right: oversampled region $K^+$ extending 2 coarse grid layers around $K$.}
    \label{fig:oversampling}
\end{figure}

\subsection{Construction of pressure multiscale space}

In this section, we outline the procedure for generating the pressure multiscale approximation space, denoted as $Q_{\mathrm{ms}}$. For each coarse grid block $K_i \in \mathcal{T}^H$, we establish a localized eigenvalue problem. Specifically, we seek the eigenpairs $(\phi_j^i, p_j^i) \in V_{h,0}(K_i) \times Q_h(K_i)$ and the corresponding eigenvalues $\lambda_j^i \in \mathbb{R}$ satisfying:
\begin{align*}
a_0^H(\phi_j^i, v) - b(v, p_j^i) &= 0 && \forall v \in V_{h,0}(K_i),  \\
b(\phi_j^i, q) &= \lambda_j^i s_i^H(p_j^i, q) && \forall q \in Q_h(K_i), 
\end{align*}
where $j = 1, \dots, L_i$, with the integer $L_i \in \mathbb{N}^+$ representing the total number of localized degrees of freedom determined by the fine and coarse meshes. The localized bilinear operator $s_i: Q_h(K_i) \times Q_h(K_i) \to \mathbb{R}$ is formulated as follows:
\[
s_i^H(p, q) = \int_{K_i} \tilde{\kappa}^H p q \, dx, \quad \text{where } \tilde{\kappa}^H = \kappa_0^H \sum_{j=1}^{N_c} |\nabla \chi_j|^2.
\]

Here, the basis functions $\{\chi_j\}_{j=1}^{N_c}$ constitute a specialized multiscale partition of unity (POU). To be precise, for any internal coarse  node $x_j$ and its associated overlapping neighborhood $\omega_j = \bigcup \{K \in \mathcal{T}^H : x_j \in \partial K\}$, the POU function $\chi_j$ is computed by solving the following local elliptic PDEs within each $K \subset \omega_j$:
\begin{align*}
-\nabla \cdot (\kappa_0^H \nabla \chi_j) &= 0 && \text{in the interior of } K \subset \omega_j, \\
\chi_j &= g_j && \text{on } \partial K \setminus \partial \omega_j \ (\text{for all } K \subset \omega_j), \\
\chi_j &= 0 && \text{on the boundary } \partial \omega_j.
\end{align*}
In the boundary conditions above, $g_j$ is a piecewise linear and continuous function along the edges of the coarse element. 

Based on the normalization, we set the constraint $s_i^H(p_j^i, p_j^i) = 1$. Then ordering the eigenvalues such that $0 \le \lambda_1^i \le \lambda_2^i \le \dots \le \lambda_{L_i}^i$ and truncate the spectrum, retain only the first $J_i$ (where $1 \le J_i \le L_i$) eigenfunctions that correspond to the smallest eigenvalues. The global multiscale space for the pressure is defined as the linear span of these selected basis functions:
\[
Q_{\mathrm{ms}} = \operatorname{span}\{p_j^i : i = 1, \dots, N, \ j = 1, \dots, J_i\}.
\]

\subsection{Construction of velocity multiscale functions}

In this subsection, we detail the derivation of the multiscale velocity space, denoted as $V_{\mathrm{ms}}$. First, we define a global projection operator $\pi: Q_h \to Q_{\mathrm{ms}}$ , which can be explicitly formulated as:
\[
\pi_s q = \sum_{i=1}^N \sum_{j=1}^{J_i} s_i^H(p_j^i, q) p_j^i, \quad \forall q \in Q_h.
\]

First of all, we define a global bilinear form $s: Q_h \times Q_h \to \mathbb{R}$ by the local bilinear form $s_i$ as $s(p, q) = \sum_{i=1}^N s_i^H(p, q)$. From this perspective, the operator $\pi_s$ is an orthogonal projection mapping the fine grid space $Q_h$ onto $Q_{\mathrm{ms}}$ with respect to the $s(\cdot, \cdot)$ inner product.

Global energy-minimizing solutions have decay property and are highly localized. This feature allows us to approximate the global velocity basis within a truncated oversampled region. For any given coarse grid block $K_i \in \mathcal{T}^H$, we expand its boundary by $\ell$ ($\ell \in \mathbb{N}^+$) coarse grid layers, see figure 1 below. The resulting oversampled domains are defined as:
\[
K_{i,0} = K_i, \quad K_{i,\ell} = \bigcup \left\{ K \in \mathcal{T}^H : K \cap \overline{K_{i,\ell-1}} \neq \emptyset \right\}, \quad \text{for } \ell = 1, 2, \dots
\]
For notational simplicity, we let $K_i^+$ denote the target localized region $K_{i,\ell}$ with a sufficiently large layer parameter $\ell$.

For each selected pressure eigenfunction $p_j^i \in Q_{ms}$, the corresponding multiscale velocity basis function $\psi_{j,{ms}}^i \in V_{h,0}(K_i^+)$ along with $q_{j,{ms}}^i \in Q_h(K_i^+)$ are determined by solving the following localized system:
\begin{align*}
a_0^H(\psi_{j,{ms}}^i, v) - b(v, q_{j,{ms}}^i) &= 0 && \forall v \in V_{h,0}(K_i^+),  \\
s(\pi_s q_{j,{ms}}^i, \pi q) + b(\psi_{j,{ms}}^i, q) &= s(p_j^i, q) && \forall q \in Q_h(K_i^+). 
\end{align*}

Then, the global multiscale velocity space is defined by taking the linear span of all these localized velocity basis functions:
\[
V_{{ms}} = \operatorname{span} \left\{ \psi_{j,{ms}}^i : i = 1, \dots, N, \ j = 1, \dots, J_i \right\}.
\]

\subsection{The adaptive method and discretizations}

From the previous discussion, the multiscale spaces for pressure and velocity
have been constructed as $Q_{ ms}=\operatorname{span}\{p_1,\ldots,p_{N_{\rm ms}}\}$, $V_{ ms}=\operatorname{span}\{\psi_1,\ldots,\psi_{N_{\rm ms}}\}$, 
where \(N_{ ms}\) denotes the number of multiscale basis functions in the
final space. In the standard implementation, these multiscale spaces are
constructed at the initial time and then kept fixed throughout the whole
simulation. However, since the effective permeability
$
    \kappa_n=\lambda_t(S_w^n)K
$
depends on the saturation, the coefficient entering the pressure and velocity
equations changes as the saturation updates. Therefore, the multiscale basis
functions constructed from the initial coefficient may become outdated at later
time levels.

To reduce this coefficient-outdated error, we introduce an adaptive update
strategy for the multiscale spaces. The main idea is to monitor the variation
of the coefficient field by an error indicator and update the CEM
multiscale spaces once this indicator exceeds a prescribed tolerance. More
precisely, when the update criterion is satisfied, the multiscale spaces are
reconstructed by replacing the initial coefficient used in the bilinear forms
in Subsections 3.1 and 3.2 with the current coefficient \(\kappa_n^H\).

We first introduce an error indicator to measure the variation of the
coefficient field. Let \(i\) denote the last time level at which the
multiscale space was updated, and let \(n\) be the current time level. We define
\[
    \eta_n
    =
    \left\|
    \left(\kappa_i^H\right)^{-\frac12}
    -
    \left(\kappa_n^H\right)^{-\frac12}
    \right\|_{L^2(\Omega)} 
\]
Then the adaptive multiscale solution can be obtained by the following steps. \\

For given $S_w^{H,n}$ at the time step $t_n$ and most recent updated space $Q_{ms}^i$ and $V_{ms}^i$, we seek the multiscale solutions of the P-IMPES scheme at the time step $t_{n+1}$ as follows: 

Step 1. First, find the capillary velocity $\xi_c^{H,n+1}$ by solving
\begin{align}
    a_n^H( \xi_c^{H,n+1},w)=b(w,p_c(S_w^{H,n}))-b_D(w,p_n^B-p_w^B) && \forall w\in V_{ms}^i \tag{3.1} \label{3.1}
\end{align}

Step 2. Then seek $u_t^{H,n+1} \in V_{ms}^i $ and $p_w^{H,n+1} \in Q_{ms}^i $ satisfying
{ 
\begin{align*}
   a_n^H(u_t^{H,n+1},w) - b(w,p_w^{H,n+1}) &= a_n^H(f_n(S_w^{H,n})\xi_c^{H,n+1} ,w) - b_D(w,p_w^B) \quad \forall w \in V_{ms}^i \tag{3.2} \label{3.2} \\
    \sum_\alpha \beta_\alpha({u}_t^{H,n+1}, q; S_w^{H,n})& = (F_t,q)  \qquad \qquad \forall q \in Q_{ms}^i, \tag{3.3} \label{3.3}
\end{align*}
}Moreover, to guarantee the uniqueness of the solution, we add an additional constraint as $\int_{\Omega} p_w^{H,n+1} = 0$ in step 2.

Step 3. Update the two phase saturation $S_w^{H,n+1}$ and $S_n^{H,n+1}$ in every fine element $K^h \in \mathcal{T}_h$ by
\begin{align*}
  (\phi \frac{S_w^{H,n+1} - S_w^{H,n}}{\Delta t},\mathbf{1}_{K^h})& = -\beta_w({u}_t^{H,n+1}, \mathbf{1}_{K^h}; S_w^{H,n}) +\beta_c(\xi_c^{H,n+1},\mathbf{1}_{K^h}; S_w^{H,n}) +( F_w , \mathbf{1}_{K^h}) \tag{3.4} \label{3.4}  \\
  S_n^{H,n+1}&=1-S_w^{H,n+1}  \tag{3.5}
\end{align*}

Step 4. Then update the effective permeability by $\kappa_{n+1}^H=\lambda_t(S_w^{H,n+1})K $, and we compute the error indicator
\[
    \eta_{n+1}
    =
    \left\|
    \left(\kappa_i^H\right)^{-\frac12}
    -
    \left(\kappa_{n+1}^H\right)^{-\frac12}
    \right\|_{L^2(\Omega)} ,
\]
For a given prescribed tolerance \(\varepsilon_{\rm }>0\).

If $ \eta_{n+1}>\varepsilon$, we regenerate the multiscale spaces in Subsections 3.1 and 3.2 using the coefficient 
\(\kappa_{n+1}^H\). The newly constructed spaces are denoted by
$   Q_{ ms}^{n+1}$, $ V_{ ms}^{n+1}$.
Then we set $i=n+1$ .

Otherwise, if \(\eta_{n+1}\leq \varepsilon\), the current multiscale
spaces are reused, namely $   Q_{ ms}^{i}$, $ V_{ ms}^{i}$, which are kept unchanged for the next time step. \\
\qed

Although our method is formulated by focusing on the wetting phase ($\alpha = w$), in fact, this P-IMPES scheme satisfies an unbiased property. It means we can get exactly the same solution using $\alpha = n$. We will theoretically prove this property in the next section and numerically check it in the numerical experiment.

We note that the velocity obtained from \eqref{3.2}--\eqref{3.3} is conservative
only with respect to the multiscale pressure space. To avoid unnecessary
local solves, we postprocess only those coarse elements containing a
nonzero fine-grid mass residual. Let
$\widehat{\mathbf u}_t^{H,n+1}$ denote the multiscale velocity before
postprocessing. For each fine element $K^h\in\mathcal T_h$, define
\(
R_{K^h}^{n+1}
:=
(F_t,\mathbf{1}_{K^h})
-
\sum_{\alpha}
\beta_\alpha
\left(
\widehat{\mathbf u}_t^{H,n+1},
\mathbf{1}_{K^h};S_w^{H,n}
\right),
\) and introduce the marked set
\(
\mathcal T_{H,\mathrm{pt}}^{n+1}
=
\left\{
K\in\mathcal T_H: 
\exists K^h\in K,R_{K^h}^{n+1}\neq0
\right\}.
\) For each $K\in\mathcal T_{H,\mathrm{pt}}^{n+1}$, we seek
$\mathbf u_{t,p,K}^{H,n+1}\in V_h(K)$ and
$p_{w,p,K}^{H,n+1}\in Q_h(K)/\mathbb R$ satisfying
\[
\begin{aligned}
a_{n,K}^H(\mathbf u_{t,p,K}^{H,n+1},\mathbf v)
-b_K(\mathbf v,p_{w,p,K}^{H,n+1})
&=
a_{n,K}^H
\left(
f_n(S_w^{H,n})\boldsymbol\xi_c^{H,n+1},
\mathbf v
\right),
&&\forall\mathbf v\in V_{h,0}(K),\\
\sum_\alpha
\beta_{\alpha,K}
\left(
\mathbf u_{t,p,K}^{H,n+1},
q;S_w^{H,n}
\right)
&=(F_t,q)_K,
&&\forall q\in Q_h(K),
\end{aligned}
\]
subject to
\[
\mathbf u_{t,p,K}^{H,n+1}\cdot\mathbf n_K
=
\widehat{\mathbf u}_t^{H,n+1}\cdot\mathbf n_K
\quad\text{on }\partial K,
\qquad
(p_{w,p,K}^{H,n+1},\mathbf{1}_K)=0.
\]
The upwind directions are kept fixed during each local solve. On the
unmarked coarse elements, no local problem is solved, and we set
\(
\mathbf u_{t,p}^{H,n+1}|_K
=
\widehat{\mathbf u}_t^{H,n+1}|_K,\text{ for }K\notin\mathcal T_{H,\mathrm{pt}}^{n+1}.
\)
Since the coarse-element constants belong to $Q_{\mathrm{ms}}^i$,
equation \eqref{3.3} implies
\(
\sum_{K^h\subset K}R_{K^h}^{n+1}=0,
\)
which gives the compatibility condition for every marked local problem.
Moreover, the normal trace is unchanged on each coarse-element
boundary, and hence $\mathbf u_{t,p}^{H,n+1}\in V_h$. We introduce the cumulative marked set
\(
\mathcal A_{H,\mathrm{pt}}^{i,n+1}
=
\bigcup_{k=i+1}^{n+1}
\mathcal T_{H,\mathrm{pt}}^{k},
\)
where $i$ denotes the most recent time level at which the CEM space
was updated. We then define the selectively enriched velocity space by
\(
\widetilde V_{\mathrm{ms}}^{i,n+1}
=
V_{\mathrm{ms}}^i
+
\bigoplus_{K\in\mathcal A_{H,\mathrm{pt}}^{i,n+1}}
V_{h,0}(K)
\subset V_h.
\)
The postprocessed pair
\(
(\mathbf u_{t,p}^{H,n+1},p_{w,p}^{H,n+1})
\in
\widetilde V_{\mathrm{ms}}^{i,n+1}
\times Q_h/\mathbb R
\)
is characterized by
\begin{align}
a_n^H
\left(
\mathbf u_{t,p}^{H,n+1},\mathbf w
\right)
-
b
\left(
\mathbf w,p_{w,p}^{H,n+1}
\right)
&=
a_n^H
\left(
f_n(S_w^{H,n})\boldsymbol\xi_c^{H,n+1},
\mathbf w
\right)
-
b_D(\mathbf w,p_w^B),
&&
\forall\mathbf w\in
\widetilde V_{\mathrm{ms}}^{i,n+1},
\tag{3.6}
\label{3.6}
\\
\sum_\alpha
\beta_\alpha
\left(
\mathbf u_{t,p}^{H,n+1},
q;S_w^{H,n}
\right)
&=
(F_t,q),
&&
\forall q\in Q_h.
\tag{3.7}
\label{3.7}
\end{align}
We therefore set
\(
\mathbf u_t^{H,n+1}
=
\mathbf u_{t,p}^{H,n+1}, 
p_w^{H,n+1}
=
p_{w,p}^{H,n+1}
\)
in the subsequent analysis and saturation update.

Next, we will discuss the full discretization of the problem. Let the fine-scale spaces for pressure and velocity be spanned by $Q_h = \text{span}\{q_1, \ldots, q_{N_f}\}$ and $V_h = \text{span}\{\phi_1, \ldots, \phi_{N_e}\}$, and the corresponding multiscale spaces be $Q_{ms}^i = \text{span}\{p_1, \ldots, p_{N_{ms}}\}$ and $V_{ms}^i = \text{span}\{\psi_1, \ldots, \psi_{N_{ms}}\}$. To relate these spaces algebraically, we construct the mapping matrices. Since for every $\psi_i \in V_{ms} \subset V_h$, we can express $\psi_i$ by the linear span of fine scale basis as $\psi_i=\sum\limits_{j=1}^{N_e} {v}_j \phi_j$. Collect all these representative vectors $\mathbf{v}_i=(v_j)_{N_e \times 1}$ and assemble them into a mapping matrix denoted by $V_i=[\mathbf{v}_1, \ldots, \mathbf{v}_{N_{ms}}] \in \mathbf{R}^{N_e \times N_{ms}}$. Similarly, we have the mapping matrix for the multiscale pressure space $P_i=[\mathbf{p}_1, \ldots,\mathbf{p}_{N_{ms}}]$. Recall that \(i\) is the last time level when the most recent multiscale space was updated.

First, we give the following definitions of the matrices:
\begin{align*}
    &A^n=\left(a_n^H(\phi_i,\phi_j)  \right)_{N_{e} \times N_{e}}, \quad  A^{H,n}=V_i^TA^nV_i\\
    &B^n=\left( b(\phi_i,q_j) \right)_{N_{e} \times N_{f}}, \qquad B^{H,n}=V^T_iB^nP_i \\ 
    & B_{\alpha}^n=\left( \beta_\alpha(\phi_i, q_j; S_w^{H,n})\right)_{N_{e} \times N_f}, \quad  B_{\alpha}^{H,n}=V^T_iB_{\alpha}^{n}P_i \\
    &B_c^n=\left(\beta_c(\phi_i, q_j; S_w^{H,n})\right)_{N_{e} \times N_{f}}, \quad  B_{c}^{H,n}=V^T_iB_{c}^{n}P_i \\
    &A_c^n=\left(a_n^H(f_n(S_w^{H,n})\phi_i,\phi_j)  \right)_{N_{e} \times N_{e}}, \quad A_c^{H,n}=V^T_iA_c^nV_i \\
    &D =\left( b_D(\phi_i,1) \right)_{N_{e} \times1} ,\qquad  \qquad D^H=V^T_iD \\
    & \mathbf{F}_{\alpha}=(F_{\alpha},q_i)_{N_{f} \times1} , \quad \mathbf{F}_t=\mathbf{F}_w +\mathbf{F}_n , \quad   
    \Pi_i\mathbf{F}_{\alpha}=P_i^T \mathbf{F}_{\alpha}
\end{align*} 

\textbf{(Matrix\Tat{–}Vector Formulation)} For given $\mathbf{S}_w^{H,n}\in \mathbb{R}^{N_f}$ at the time step $t_n$. We seek the multiscale solutions of the P-IMPES scheme at the time step $t_{n+1}$ as follows: \\
Step 1. First, discretize the capillary velocity $\xi_c^{H,n+1}=\sum\limits_{i=1}^{N_{ms}} x_i^{n+1} \psi_i$ and obtain $x_c^{n+1}=(x_i^{n+1})_{N_{ms}\times 1}$ by solving 
\begin{align*}
    A^{H,n}x_c^{n+1}=V^T_iB^np_c(\mathbf{S}_w^{H,n})-(p_n^B-p_w^B)D^H \tag{3.8}
\end{align*}
Step 2. We further seek $\mathbf{u}_t^{H,n+1} \in \mathbb{R}^{N_{ms}}$ , $\mathbf{p}_w^{H,n+1}\in \mathbb{R}^{N_{ms}}$ by
\begin{equation*}
\begin{bmatrix}
{A}^{H,n} & -B^{H,n} \\
-(B_{w}^{H,n}+B_n^{H,n})^T & 0
\end{bmatrix}
\begin{bmatrix}
\mathbf{u}_t^{H,n+1} \\
\mathbf{p}_w^{H,n+1}
\end{bmatrix}
=
\begin{bmatrix}
{A}_c^{H,n} x_c^{n+1} - p_w^B{D^H} \\
-\Pi_i\mathbf{F}_t
\end{bmatrix}.
\tag{3.9}
\end{equation*}
Then apply the local postprocessing procedure in every coarse element
\(K\in \mathcal A_{H,\mathrm{pt}}^{i,n+1}\) and assemble the postprocessed fine-grid velocity
\(\mathbf u_{t,p}^{H,n+1}\).\\
Step 3. Finally, update the two phase saturation $\mathbf{S}_w^{H,n+1}$ and $\mathbf{S}_n^{H,n+1}$ by
\begin{align*}
  \phi \frac{\mathbf{S}_w^{H,n+1} - \mathbf{S}_w^{H,n}}{\Delta t}& = -(B_w^{n})^T\mathbf u_{t,p}^{H,n+1} +(B_c^{n})^TV_ix_c^{n+1}+\mathbf{F}_w   \\
  \mathbf{S}_n^{H,n+1}&=1-\mathbf{S}_w^{H,n+1}  
  \tag{3.10}
\end{align*}
Step 4. Adaptive update of the multiscale spaces. After obtaining
\(\mathbf S_w^{H,n+1}\), set
\(
    \kappa_{n+1}^H=\lambda_t(\mathbf S_w^{H,n+1})K
\)
and compute
\(
    \eta_{n+1}
    =
    \|
    \left(\kappa_i^H\right)^{-\frac12}
    -
    \left(\kappa_{n+1}^H\right)^{-\frac12}
    \|_2 .
\)

If \(\eta_{n+1}>\varepsilon\), then regenerate the CEM multiscale
spaces with the current coefficient \(\kappa_{n+1}^H\), and update
\[
    Q_{ms}^{i}=Q_{ ms}^{n+1},\qquad
    V_{ms}^{i}=V_{ ms}^{n+1},\qquad
    P_i=P_{n+1},\qquad
    V_i=V_{n+1},\qquad
    i=n+1.
\]
Otherwise, the current spaces and mapping matrices are kept unchanged.

We remark that Step 2 is solved together with the condition $\int_{\Omega} p = 0$, so we will add Lagrange multiplier in (3.9) to satisfy the constraint.\\

\section{Analysis}
\label{sec 4}

In this section, we present the theoretical analysis of the proposed method in two main parts. First, following the analytical framework in \cite{ChenSun2021PIMPES}, we establish several key properties of the scheme. Subsequently, we derive an essential relationship between the saturation error and the velocity error.

\subsection{Local conservation of mass and unbiased property}
\label{4.1}
In this subsection, we prove two physical properties of the proposed multiscale
P-IMPES scheme: local mass conservation and the unbiased property. Although only
the wetting-phase saturation is updated explicitly, the non-wetting-phase
saturation is determined by the saturation constraint and also satisfies a
conservative discrete equation. This follows from the fine-grid conservation of
the total velocity and the wetting-phase conservation equation. We also show that
the same discrete solution is obtained whether the saturation equation is written
for the wetting phase or for the non-wetting phase.

Next We  will prove the result in two parts.

\textbf{(i) Local conservation of mass.}
The scheme updates only the wetting phase via
\begin{align*}
\left(\phi\frac{S_w^{H,n+1}-S_w^{H,n}}{t_{n+1}-t_n},q\right)
+\beta_w(\mathbf{u}_t^{H,n+1},q;S_w^{H,n})
&=(F_w,q)+\beta_c(\xi_c^{H,n+1},q;S_w^{H,n}), \tag{4.1}\\
S_n^{H,n+1}&=1-S_w^{H,n+1}
\end{align*}
The two phase mass conservation means that both phases satisfy:
\begin{align}
\left(\phi\frac{S_w^{H,n+1}-S_w^{H,n}}{t_{n+1}-t_n},q\right)
+\beta_w(\mathbf{u}_t^{H,n+1},q;S_w^{H,n})
&=(F_w,q)+\beta_c(\xi_c^{H,n+1},q;S_w^{H,n}), \tag{4.2}
\label{4.2}\\
\left(\phi\frac{S_n^{H,n+1}-S_n^{H,n}}{t_{n+1}-t_n},q\right)
+\beta_n(\mathbf{u}_t^{H,n+1},q;S_w^{H,n})
&=(F_n,q)-\beta_c(\xi_c^{H,n+1},q;S_w^{H,n}). \tag{4.3}
\label{4.3}
\end{align}
Since (4.1) and (4.2) coincide, it suffices to show that $S_n^{H,n+1}$ from $S_n^{H,n+1}=1-S_w^{H,n+1}$ agrees with the solution of (4.3). On the one hand, adding (4.2) and (4.3), using \eqref{3.7} we have
\[
\sum_\alpha\left(\phi\frac{S_\alpha^{H,n+1}-S_\alpha^{H,n}}{t_{n+1}-t_n},q\right)=0.
\]
Choosing $q=\mathbf{1}_{K^h}$ gives $\sum_\alpha(S_\alpha^{H,n+1}-S_\alpha^{H,n})=0$ on each $K^h$. Combined with the initial condition $S_w^{H,0}+S_n^{H,0}=1$, this derives $S_w^{H,n+1}+S_n^{H,n+1}=1$ for all $n$. On the other hand, subtracting (4.1) by (3.7) and replacing $S_w^{H}=1-S_n^{H} $ in the first term left, we can obtain (4.3) directly. 

\textbf{(ii) Unbiased property for velocity.}
From part (i), $S_w^{H,n+1}$ and $S_n^{H,n+1}$ are unbiased. The total velocity $\mathbf{u}_t^{H,n+1}$ and the wetting pressure $p_w^{H,n+1}$ are uniquely determined by \eqref{3.2} and \eqref{3.3}; $\xi_c^{H,n+1}$ is uniquely solved by \eqref{3.1}. The phase velocities $\mathbf{u}_\alpha^{H,n+1}$ are unbiased via
\begin{align*}
\mathbf{u}_w^{H,n+1}&=f_w(S_w^{H,n})\mathbf{u}_t^{H,n+1}-f_w(S_w^{H,n})f_n(S_w^{H,n})\xi_c^{H,n+1},\\
\mathbf{u}_n^{H,n+1}&=f_n(S_w^{H,n})\mathbf{u}_t^{H,n+1}+f_w(S_w^{H,n})f_n(S_w^{H,n})\xi_c^{H,n+1},
\end{align*}

\subsection{Bounds-preserving property for both phases}

This part is motivated by the arguments in  \cite{ChenSun2021PIMPES}. We first give two auxiliary lemmas that will be used to establish the bounds-preserving property.

\begin{lemma}
    There exists a positive constant $l_\alpha$ such that
\[
    f_\alpha(S_\alpha) \leq l_\alpha S_\alpha, 
    \qquad \alpha = w,n .
\]
\end{lemma}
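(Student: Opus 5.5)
The plan is to reduce the inequality to a statement about the relative permeability curves, which are the only saturation-dependent ingredients. By definition $f_\alpha(S_\alpha)=\lambda_\alpha/\lambda_t=k_{r\alpha}(S_\alpha)/\bigl(\mu_\alpha\lambda_t(S_w)\bigr)$. I will use the standard structural assumptions of the model, as in \cite{ChenSun2021PIMPES}. First, $k_{r\alpha}$ is nonnegative and Lipschitz continuous on $[0,1]$ with $k_{r\alpha}(0)=0$; this holds for the Brooks--Corey-type laws $k_{r\alpha}=S_\alpha^{m}$ with $m\ge1$. Second, the total mobility is bounded below, $\lambda_t\ge\lambda_{\min}>0$ on $[0,1]$. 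The second assumption is automatic when the curves are continuous and never vanish simultaneously, since $[0,1]$ is compact.

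The first step is to bound the numerator. Using $k_{r\alpha}(0)=0$ and the Lipschitz constant $L_{k,\alpha}$, for every $S_\alpha\in[0,1]$ we get
\[
k_{r\alpha}(S_\alpha)=k_{r\alpha}(S_\alpha)-k_{r\alpha}(0)\le L_{k,\alpha}S_\alpha .
\]
For the power laws this is immediate with $L_{k,\alpha}=1$, because $S_\alpha^m\le S_\alpha$ when $m\ge1$.

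The second step is to bound the denominator from below by $\lambda_{\min}$. Combining the two bounds gives
\[
f_\alpha(S_\alpha)\le \frac{L_{k,\alpha}}{\mu_\alpha\lambda_{\min}}S_\alpha=:l_\alpha S_\alpha ,
\]
and $l_\alpha$ is a positive constant independent of $S_\alpha$. Near $S_\alpha=0$, the numerator vanishes linearly while the denominator stays positive, so no singularity occurs.

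The main obstacle is justifying the lower bound on $\lambda_t$, because the statement rests on this implicit hypothesis. If both $k_{rw}$ and $k_{rn}$ vanished at a common point, for instance because of residual saturations, $f_\alpha$ could fail to be Lipschitz there. The first thing I will check is $\lambda_{\min}$ for the specific Brooks--Corey curves, $\lambda_t=S_w^2/\mu_w+(1-S_w)^2/\mu_n$. This is a positive quadratic whose minimum is $1/(\mu_w+\mu_n)>0$. If needed, I will state positivity of $\lambda_t$ explicitly as an assumption and note that $l_\alpha$ depends on it.

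Finally, the lemma is only applied to saturations in $[0,1]$; the bounds-preserving theorem confines saturations to this range by induction.
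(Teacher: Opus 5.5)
Your argument is correct and is the standard proof of this fact. The paper gives no argument of its own and only cites Chen and Sun; your two ingredients, $k_{r\alpha}(S_\alpha)\le L_{k,\alpha}S_\alpha$ (from $k_{r\alpha}(0)=0$ and Lipschitz continuity) and $\lambda_t\ge\lambda_{\min}>0$, give $l_\alpha=L_{k,\alpha}/(\mu_\alpha\lambda_{\min})$ exactly as needed.

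One refinement of your closing discussion: for the paper's curves $k_{rw}=\bar S_w^2$, $k_{rn}=(1-\bar S_w)^2$ one has $\lambda_t\ge 1/(\mu_w+\mu_n)$ for every real $\bar S_w$, so the residual saturations do not threaten the denominator at all; what they break is your hypothesis $k_{r\alpha}(0)=0$, since at $S_\alpha=0$ the unclipped curve gives $k_{r\alpha}=\bigl(S_{r\alpha}/(1-S_{rw}-S_{rn})\bigr)^2>0$ and no finite $l_\alpha$ works on all of $(0,1]$, so for that model you must either clip $k_{r\alpha}=0$ for $S_\alpha\le S_{r\alpha}$ or use the restriction $S_\alpha\ge S_{t\alpha}$ of Theorem 4.1, under which $l_\alpha=1/S_{t\alpha}$ works trivially because $f_\alpha\le 1$.
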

This lemma shows that each fractional flow function can be bounded above by a linear function of the corresponding saturation.

\begin{lemma}
    Suppose that $F_\alpha$ defined in \eqref{2.1} is a sink term, namely $F_\alpha \leq 0$. Then there exist two positive constants $l_1$ and $l_2$ such that
\[
    l_1 S_\alpha \leq |F_\alpha| \leq l_2 S_\alpha .
\]
\end{lemma}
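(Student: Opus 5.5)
The plan is to read this lemma as it is used in \cite{ChenSun2021PIMPES}: a structural assumption on how the sink is modelled, not an identity that holds for arbitrary $F_\alpha$. A sink removes phase $\alpha$ at a rate proportional to its local amount, so I will work with the standard production form
\[
F_\alpha = -\,r\, f_\alpha(S_\alpha)\quad\text{or}\quad F_\alpha = -\,r\,S_\alpha ,
\]
where the well rate $r(x)$ is nonnegative and bounded, $0<r_{\min}\le r\le r_{\max}$ on the sink region. I will state this modelling hypothesis explicitly, since without it the two-sided bound cannot hold: a sink that stays nonzero where $S_\alpha=0$ violates the upper bound. The proof then splits into the two inequalities.

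\textbf{Upper bound.} Here $|F_\alpha| = r f_\alpha(S_\alpha) \le r_{\max} f_\alpha(S_\alpha)$. The previous lemma gives $f_\alpha(S_\alpha)\le l_\alpha S_\alpha$, so I take $l_2 = r_{\max} l_\alpha$. In the linear model $F_\alpha = -rS_\alpha$ the bound is immediate with $l_2=r_{\max}$.

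\textbf{Lower bound.} This needs $f_\alpha(S_\alpha)\ge c\,S_\alpha$ for some $c>0$ on $[0,1]$. I expect this to be the main obstacle. For Brooks--Corey type relative permeabilities, $k_{r\alpha}\sim S_\alpha^{m}$ with $m>1$, so $f_\alpha(S)/S\to 0$ as $S\to0$ and no such $c$ exists near zero saturation. I would first try to get the bound from the linear sink model $F_\alpha=-rS_\alpha$, which gives $l_1=r_{\min}$ directly. Failing that, I would restrict to the saturation range where $S_\alpha$ is bounded away from zero, say $S_\alpha\ge\delta$. There $f_\alpha$ is continuous and positive, so $\min_{[\delta,1]} f_\alpha(S)/S>0$, and I take $l_1=r_{\min}\min_{[\delta,1]}f_\alpha(S)/S$. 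Alternatively, the lower inequality can be treated as part of the hypothesis: its only role in the bounds-preserving argument is to keep the sink from vanishing where the phase is present.

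Finally, both constants depend only on $r$ and the mobility functions, not on $h$, $H$, $n$ or the multiscale spaces. This is what lets the lemma be combined with the first lemma in the CFL argument for the bounds-preserving theorem. Outside the sink region $F_\alpha=0$, and the bounds are needed only where $F_\alpha\neq0$; I will note this convention in the statement.
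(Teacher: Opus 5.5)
Your proposal is correct in substance, and it takes a different route from the paper. The paper gives no argument for this lemma: it only cites \cite{ChenSun2021PIMPES}, where the two-sided bound is effectively built into how the sink term is modelled. You instead make that modelling hypothesis explicit, which is the right move. For a general $F_\alpha\le 0$ the statement is false. A sink that persists where $S_\alpha=0$ violates the upper bound, as you note. A point with $F_\alpha=0<S_\alpha$ (allowed by the hypothesis $F_\alpha\le 0$) violates the lower bound. That second case is why your restriction to the support of the sink is necessary, not cosmetic.

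What each route buys: the citation is brief, while your version shows which half of the lemma actually follows from Lemma 4.1. Under the linear model $F_\alpha=-rS_\alpha$ both inequalities are immediate. Under the fractional-flow production model $F_\alpha=-r f_\alpha(S_\alpha)$, the upper bound follows from Lemma 4.1 exactly as you say. Your observation that the lower bound fails near $S_\alpha=0$ for exponents $m>1$ is also correct, and it applies to the quadratic permeabilities of Section~5, where $f_w(S)\sim(\mu_n/\mu_w)S^2$. In that model $l_1$ necessarily depends on a saturation floor $\delta$, and $\delta=S_{t\alpha}$ from Theorem 4.1 is the natural choice.

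One correction to your closing remark: the lower bound plays no role in the paper's bounds-preserving proof. Theorem 4.1 only uses $F_w\ge -l_2 S_w^{H,n}$, so a $\delta$-dependent (or even omitted) lower bound costs nothing there. Two points for the write-up:
\begin{itemize}
\item Commit to one sink model rather than listing fallbacks.
\item State that the lemma is invoked at $S_\alpha=S_\alpha^{H,n}$, so the sink must be evaluated at the old time level for the upper bound to enter the explicit update.
\end{itemize}
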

The above two lemmas have been proved in \cite{ChenSun2021PIMPES}. Based on these results, we now state the theorem that guarantees the bounds-preserving property.
\begin{theorem}
    Assume that $S_\alpha^{H,n}\in(0,1)$ and that there exists a tolerance saturation
$S_{t\alpha}>0$ such that
\[
    S_\alpha^{H,n}\ge S_{t\alpha}>0, 
    \qquad \alpha=w,n .
\]
Let $\Delta t=t_{n+1}-t_n$. For any $K^h\in\mathcal T_h$, if $\frac{\Delta t}{h}$ is sufficiently small, then
\[
    S_\alpha^{H,n+1}\in(0,1),
    \qquad \alpha=w,n .
\]
\end{theorem}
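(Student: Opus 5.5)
Since, by part (i) of Subsection~\ref{4.1}, $S_n^{H,n+1}=1-S_w^{H,n+1}$ and the non-wetting saturation satisfies its own conservative equation (4.3), which has the same structure as (4.2), it suffices to show $S_\alpha^{H,n+1}>0$ for $\alpha=w,n$. The upper bound then follows: $S_w^{H,n+1}<1$ is equivalent to $S_n^{H,n+1}>0$, and conversely. The plan is to test (4.2) (respectively (4.3)) with $q=\mathbf 1_{K^h}$ on a fixed fine element, which gives the explicit local update
\[
S_\alpha^{H,n+1}|_{K^h}=S_\alpha^{H,n}|_{K^h}-\frac{\Delta t}{\phi|K^h|}\Big(\beta_\alpha(\mathbf u_t^{H,n+1},\mathbf 1_{K^h};S_w^{H,n})-\sigma_\alpha\beta_c(\xi_c^{H,n+1},\mathbf 1_{K^h};S_w^{H,n})-(F_\alpha,\mathbf 1_{K^h})\Big).
\]
Using the postprocessed velocity, which lies in $V_h$ by construction, keeps every flux on $\partial K^h$ single-valued.

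Next I split each edge flux of the phase velocity $f_\alpha\mathbf u_t\mp f_wf_n\xi_c$ according to the upwind rule. On inflow edges of $K^h$, where $\mathbf u_\alpha\cdot\mathbf n<0$, the flux contributes a nonnegative amount to $S_\alpha^{H,n+1}$ and can simply be discarded in a lower bound. On outflow edges the upwind value is the interior value $S_\alpha^{H,n}|_{K^h}$. By the first lemma of this subsection, $f_\alpha(S_\alpha)\le l_\alpha S_\alpha$, and $f_wf_n\le \min(f_w,f_n)\le l_\alpha S_\alpha$, so the total outgoing flux is bounded by
\[
l_\alpha S_\alpha^{H,n}\,|\partial K^h|\,\big(\|\mathbf u_t^{H,n+1}\|_{L^\infty(\partial K^h)}+\|\xi_c^{H,n+1}\|_{L^\infty(\partial K^h)}\big).
\]
For the source term, $F_\alpha\ge0$ only helps. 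In the sink case the second lemma gives $|F_\alpha|\le l_2 S_\alpha^{H,n}$.

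Putting these together yields
\[
S_\alpha^{H,n+1}\ge S_\alpha^{H,n}\Big(1-\frac{\Delta t}{\phi}\Big(\frac{|\partial K^h|}{|K^h|}l_\alpha C_u+l_2\Big)\Big),
\]
where $C_u$ bounds the normal traces of $\mathbf u_t^{H,n+1}$ and $\xi_c^{H,n+1}$. Since $|\partial K^h|/|K^h|\sim h^{-1}$, the bracket is positive whenever $\Delta t/h$ is sufficiently small, and the hypothesis $S_\alpha^{H,n}\ge S_{t\alpha}>0$ then gives strict positivity. The lower bound $S_{t\alpha}$ also guarantees that the sink bound and the linear bound on $f_\alpha$ are applied away from degenerate saturations, as in \cite{ChenSun2021PIMPES}.

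The main obstacle is the splitting step. The upwinding in $\beta_\alpha$ and $\beta_c$ is defined with respect to the phase velocity $\mathbf u_\alpha$, not $\mathbf u_t$ or $\xi_c$ separately, so $\beta_\alpha-\sigma_\alpha\beta_c$ has to be regrouped edge by edge as the phase flux $\int_e (f_\alpha\mathbf u_t\mp f_wf_n\xi_c)\cdot\mathbf n$ with a single upwind state. Only then are the inflow contributions genuinely of one sign. I would follow the edge-wise regrouping argument of \cite{ChenSun2021PIMPES} and check that it is unaffected by the fact that here $\mathbf u_t$ comes from the enriched multiscale space $\widetilde V_{\mathrm{ms}}^{i,n+1}$. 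A secondary point is that $C_u$ must be bounded independently of $h$ for the CFL condition to make sense; I would simply state it as part of the assumption that $\Delta t/h$ is sufficiently small.
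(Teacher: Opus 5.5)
\textbf{There is a genuine gap: your proposal misses the lagged upwinding and the sign-changing edges it creates.}

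Your overall plan matches the paper's. You test with $\mathbf 1_{K^h}$, drop incoming fluxes, bound outgoing ones by Lemma 4.1 and the sink by Lemma 4.2, take $\Delta t/h$ small, and get the upper bound from $S_w^{H,n+1}+S_n^{H,n+1}=1$.

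The step that fails is ``on outflow edges the upwind value is the interior value $S_\alpha^{H,n}|_{K^h}$.'' In this IMPES scheme the upwind states in $\beta_\alpha$ and $\beta_c$ are lagged. They are fixed from the phase-flux directions at $t_n$, before the pressure solve. Otherwise \eqref{3.3} would be nonlinear in $\mathbf u_t^{H,n+1}$ and the matrices $B_\alpha^{n}$ in (3.9) could not be assembled in advance; this is also why a separate non-upwinded definition is needed at $t=0$. The fluxes themselves, however, are evaluated with the new $\mathbf u_t^{H,n+1}$ and $\xi_c^{H,n+1}$.

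Now take a fine edge that was incoming at $t_n$ but has $\mathbf v_w^{H,n+1}\cdot\mathbf n>0$, where $\mathbf v_w^{H,n+1}=\mathbf u_t^{H,n+1}-f_n(S_{w,n}^{*,H,n})\xi_c^{H,n+1}$. Its outgoing term carries $f_w$ of the \emph{neighbour's} saturation. That is not bounded by any multiple of $S_w^{H,n}|_{K^h}$, so your multiplicative bound $S_\alpha^{H,n+1}\ge S_\alpha^{H,n}(1-C\Delta t/h)$ is not justified. The edge-wise regrouping you single out as the main obstacle does not fix this. The problem is the time-level mismatch between upwind direction and flux, not how $\beta_\alpha-\sigma_\alpha\beta_c$ is split. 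Also, the regrouped flux $f_w(S_{w,w}^{*,H,n})\,\mathbf v_w^{H,n+1}\cdot\mathbf n$ involves two different upwind states, not a single one. Your bound $f_wf_n\le f_w\le l_wS$ survives this, but only on edges whose wetting upwind state is interior.

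\textbf{What the paper does with these edges.} It isolates the sign-changing edges and requires $|\mathbf v_w^{H,n+1}\cdot\mathbf n|_F\le\varepsilon_2$ there, asserting this holds once $\Delta t$ is small. These edges then contribute only an additive term of size $C\frac{\Delta t}{\phi h}\varepsilon_2$, independent of $S_w^{H,n}|_{K^h}$. This term is absorbed by the uniform lower bound:
\[
S_w^{H,n+1}\ge S_{tw}\Bigl(1-\tfrac{\Delta t}{\phi}\bigl(l_2+C\,l_w h^{-1}\bigr)\Bigr)-C\tfrac{\Delta t}{\phi h}\,\varepsilon_2\ge \eta\, S_{tw}.
\]
This is the actual role of the hypothesis $S_\alpha^{H,n}\ge S_{t\alpha}>0$. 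Your explanation, that it keeps Lemmas 4.1--4.2 away from degenerate saturations, is not right. The fact that your final inequality needs only $S_\alpha^{H,n}>0$ is itself a sign that the sign-change contribution has been lost.
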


\begin{proof}
The proof follows the argument of Lemma 4.3 in \cite{ChenSun2021PIMPES}, with 
$\mathbf u_t^{h,n+1}$ replaced by the multiscale velocity 
$\mathbf u_t^{H,n+1}$. We only prove the positivity of the wetting-phase
saturation; the non-wetting phase can be treated in the same way.

For any $K^h\in\mathcal T_h$, taking $q=\mathbf 1_{K^h}$ in \eqref{3.4}
gives
\begin{align*}
\phi \left(S_{w}^{H,n+1}-S_{w}^{H,n}\right)
= \Delta t F_w
+ \frac{\Delta t}{h}
\sum_{F\subset\partial K^h}
f_w(S_{w,w}^{*,H,n})
\left(
f_n(S_{w,n}^{*,H,n}) \xi_c^{H,n+1}\cdot \mathbf n
-\mathbf u_t^{H,n+1}\cdot \mathbf n
\right)\big|_F \tag{4.4}
\end{align*}
Define $\mathbf v_w^{H,n+1}\in V_{ms}$ by
\[
\mathbf v_w^{H,n+1}\cdot\mathbf n
=
\mathbf u_t^{H,n+1}\cdot\mathbf n
-
f_n(S_{w,n}^{*,H,n}) \xi_c^{H,n+1}\cdot \mathbf n
\qquad \text{on }F\in\mathcal E_f .
\]
Then
\begin{equation}
S_w^{H,n+1}
=
S_w^{H,n}
+
\frac{\Delta t}{\phi}F_w
-
\frac{\Delta t}{\phi h}
\sum_{F\subset\partial K^h}
f_w(S_{w,w}^{*,H,n})
\mathbf v_w^{H,n+1}\cdot\mathbf n\big|_F .
\label{4.5}
\tag{4.5}
\end{equation}

Let
\[
\partial K_{w,n+1}^{h,+}
=
\left\{
F\subset\partial K^h:
\mathbf v_w^{H,n+1}\cdot\mathbf n\big|_F\ge 0
\right\}.
\]
The incoming part gives a nonnegative contribution in \eqref{4.5}, and hence can be omitted when deriving a lower bound. Therefore,
\[
S_w^{H,n+1}
\ge
S_w^{H,n}
+
\frac{\Delta t}{\phi}F_w
-
\frac{\Delta t}{\phi h}
\sum_{F\subset\partial K_{w,n+1}^{h,+}}
f_w(S_{w,w}^{*,H,n})
\mathbf v_w^{H,n+1}\cdot\mathbf n\big|_F .
\]
For the sink term, Lemma 4.2 implies $F_w\ge -l_2 S_w^{H,n}$, while Lemma 4.1 gives
\[
f_w(S_{w,w}^{*,H,n})\le l_w S_{w,w}^{*,H,n}.
\]
Thus the outflow contribution is bounded by a constant multiple of
$S_w^{H,n}$, except possibly on the fine edges where the sign of
$\mathbf v_w^{H,n+1}\cdot\mathbf n$ changes from time level $n$ to
$n+1$. On those sign-changing edges, the normal flux can be made arbitrarily
small when $\Delta t$ is sufficiently small. Hence there exist positive
constants $\varepsilon_1$ and $\varepsilon_2$ such that, if
\[
\frac{\Delta t}{h}\le \varepsilon_1,
\qquad
\left|\mathbf v_w^{H,n+1}\cdot\mathbf n\right|_F\le \varepsilon_2
\quad
\text{on the sign-changing fine edges},
\]
then, for some $\eta\in(0,1)$,
\[
S_w^{H,n+1}\ge \eta S_{tw}>0.
\]
Since $S_w^{H,n}\ge S_{tw}>0$, the above two conditions are satisfied once
$\Delta t/h$ is sufficiently small.

Applying the same argument to the non-wetting phase yields
\[
S_n^{H,n+1}\ge \eta S_{tn}>0.
\]
Finally, using the discrete phase conservation relation
\[
S_w^{H,n+1}+S_n^{H,n+1}=1,
\]
we obtain
\[
0<S_\alpha^{H,n+1}<1,
\qquad \alpha=w,n.
\]
This completes the proof.
\end{proof}

\noindent\textbf{Remark.}
By the proof of Theorem 4.1, the stability restriction on the time step can be written in a phase-flux form as
\[
    \max_{\alpha=w,n} E_\alpha^{n+1}\frac{\Delta t}{h}+C\Delta t<\phi,
\]
where $0<\phi<1$, $C$ depends on the sink term, and
\[
    E_\alpha^{n+1}
    =
    \max_{K^h\in\mathcal T_h}
    \sum_{F\subset\partial K^h}
    (f_\alpha(S_\alpha^{*,H,n})
    \mathbf v_\alpha^{H,n+1}\cdot\mathbf n
    )^+ ,
    \qquad \alpha=w,n .
\]
Here $a^+=\max\{a,0\}$, and $\mathbf v_\alpha^{H,n+1}$ denotes the effective phase velocity appearing in the saturation equation. In the numerical experiments, we use the following CFL condition, which includes
both the convective flux induced by the total velocity and the capillary
diffusion flux induced by $\boldsymbol{\xi}_c^{H,n+1}$:
\begin{align*}
\theta_{cfl}
&=
\max_{n}
\frac{\Delta t}{\phi h}
\Bigg(
\|\mathbf u_t^{H,n+1}\|_{L^\infty(\Omega)}
\|f_w'\|_{L^\infty(0,1)}
+
\|{\xi}_c^{H,n+1}\|_{L^\infty(\Omega)}
\|(f_w f_n)'\|_{L^\infty(0,1)}
\Bigg)
< 1 .
\end{align*}
The time step size $\Delta t$ is chosen such that
$\theta_{cfl}<1$.

\subsection{Error estimate}
In this subsection, we establish the error estimates. To obtain the numerical errors induced by the multiscale velocity approximation, we restrict our analysis to the zero-capillary form, namely the advection-dominated Buckley-Leverett flow. Incorporating capillary pressure leads to a degenerate parabolic--hyperbolic problem and requires additional estimates for the capillary
velocity, which are left for future work. Under this condition, we set zero Dirichlet and zero Neumann boundary condition, then \eqref{2.3}, \eqref{2.4} can be reduced to:
\begin{align}
    a_n(\mathbf{u}_t^{h, n+1},v) - b(v,p_w^{h, n+1} ) &= 0, && \forall v\in V_h \tag{4.6}\label{4.6} \\
    b(\mathbf{u}_t^{h, n+1}, q) &= (F_t, q)  && \forall q\in Q_h   \tag{4.7}\label{4.7}
\end{align}

Similarly for \eqref{3.6}, \eqref{3.7}, the postprocessed multiscale solution satisfies:
\begin{align}
   a_n^H(\mathbf{u}_t^{H, n+1},w) - b(w,p_w^{H, n+1} ) &= 0 && \forall w\in \widetilde{V}_{ms}^{i,n+1}\tag{4.8}\label{4.8} \\
     b(\mathbf{u}_t^{H, n+1}, q) &= (F_t, q)  && \forall q\in Q_{h}\tag{4.9}\label{4.9}
\end{align}

Besides, define a local version of the norms $\|\cdot\|_{a_i(D)}$, $\|\cdot\|_{V_i(D)}$ and $\|\cdot\|_{s_i(D)} $ at corresponding time $t=t_i$.
\begin{align*}
    \|v\|_{V_i(D)}^2 &= \int_D \tilde{\kappa}^{-1}_i|\nabla \cdot v|^2 + \int_D \kappa^{-1}_i|v|^2,  \quad   \|q\|_{s_i(D)}^2 = \int_D \tilde{\kappa}_i|q|^2\\
\|v\|_{a_i(D)}^2 &= \int_D \kappa_i^{-1}|v|^2 ,   \quad \|v\|_{a_i^H(D)}^2 = \int_D (\kappa_i^H)^{-1}|v|^2.
\end{align*}

Next, we will introduce some assumptions used in the error analysis.

\paragraph{Same-direction assumption.}
For every fine edge \(F\subset \partial K_h\),
\[
\left(u_t^{h,n+1}\cdot \mathbf{n}\right)
\left(u_t^{H,n+1}\cdot \mathbf{n}\right)\big|_F
\ge 0.
\tag{4.10}
\]
Thus the fine-scale and multiscale velocities determine the same upwind
direction on each fine edge.

\paragraph{Front-layer decomposition.}
For each time level \(t_n\), we decompose the fine mesh into a smooth region and
a front layer,
\[
\mathcal T_h=\mathcal G_h^n\cup\mathcal B_h^n,
\qquad
\mathcal G_h^n\cap\mathcal B_h^n=\emptyset.
\]
The set \(\mathcal B_h^n\) contains the cells close to the saturation fronts,
shocks, and possible mismatch strips between the fine-scale and multiscale
fronts. We assume that
\[
|\Omega_{\mathcal B}^n|
\le \delta_h , 
\qquad
0\le n\le N.
\]
On the smooth region, the two saturation profiles are locally smooth in the
following discrete sense: for neighboring fine cells \(K^h,K^{h,1}\in\mathcal G_h^n\)
which share an edge \(F\), we define
\[
\begin{aligned}
\zeta_h
=
\max_{0\le n\le N}
\max_{\substack{
K^h,K^{h,1}\in \mathcal G_h^n\\
F=K^h\cap K^{h,1}
}}
\Big[
\omega_{n,F}^{\frac12}
|S_{w,K^h}^{h,n}-S_{w,K^{h,1}}^{h,n}|
+
(\omega_{n,F}^H)^{\frac12}
|S_{w,K^h}^{H,n}-S_{w,K^{h,1}}^{H,n}|
\Big],
\end{aligned}
\tag{4.11}
\label{4.11}
\]
where
\(
\omega_{n,F}=\max(\kappa_n|_{K^h},\kappa_n|_{K^{h,1}}),
\quad
\omega_{n,F}^H=\max(\kappa_n^H|_{K^h},\kappa_n^H|_{K^{h,1}}).
\)
On the smooth region, we assume that
\[
\frac{\zeta_h}{h}=O(1)
\qquad \text{as } h\to0.
\] Moreover, the front layer is enlarged by a fixed number of fine-cell layers so
that the following stencil closure property holds: if
\(K^h\in\mathcal G_h^{n+1}\), then every fine cell involved in the upwind
update of \(K^h\) at time \(t_n\) belongs to \(\mathcal G_h^n\).

For a velocity \(u\), define the upwind update operator $\mathcal L_u $ by
\begin{align*}
(\mathcal L_u S)_{K^h}
=
S_{K^h}
-
\frac{\Delta t}{\phi |K^h|}
\sum_{F\subset\partial K^h}
|F|\left(u_{K^{h,F}}^{+} f_w(S_{K^{h}})+
u_{K^{h,F}}^{-} f_w(S_{K^{h,i}})
\right)
+\frac{\Delta t}{\phi |K^h|}(F_w,1_{K^h}),
\tag{4.12}
\label{4.12}
\end{align*}
Here $u^{+}=\max\{u,0\} $ denotes the outgoing flux, $u^{-}=\min\{u,0\}$  denotes the incoming flux.

\begin{lemma}[Front-aware transport estimates]
Under the bounds-preserving property \(0\le S_w^{h,n},S_w^{H,n}\le 1\),
the same-direction condition and the front-layer assumptions. Then the following estimates hold.

\label{lemma 4.3}
\emph{(i) Front-layer contribution.}
\[
\|e_S^n\|_{L^2(\Omega_{\mathcal B}^n)}
\le
|\Omega_{\mathcal B}^n|^{1/2}
\le
\delta_h ^{1/2}.
\tag{4.13}
\]

\emph{(ii) Saturation stability on the smooth region.}

Let \(S_1,S_2\in Q_h\), \(0\le S_1,S_2\le 1\), and suppose that the smooth-region
condition (\ref{4.11}) holds for both \(S_1\) and \(S_2\). Then
\[
\|\mathcal L_u S_1-\mathcal L_u S_2\|_{2,\mathcal G_h^{n+1}}
\le
(1+C_{\rm st}\Delta t)
\|S_1-S_2\|_{2,\mathcal G_h^n},
\tag{4.14}
\]
where
$C_{\rm st}
=
C\phi^{-1}
\left(
L_f\|F_t\|_{L^\infty(\Omega)}
+
\frac{L_{f}\zeta_h}{h\sqrt{\kappa_{min}}}\|u\|_{L^\infty(\Omega)}
\right)$
and \(C\) depends only on the shape regularity of the fine mesh.

\emph{(iii) Velocity estimate on the smooth region.}
\[
E_G^{n+1}
\le
(1+C_{\rm st}\Delta t)E_G^n
+
C_{\rm v}\Delta t
\|e_{u_t}^{n+1}\|_{a^H_n(\Omega)}.
\tag{4.15}
\label{4.15}
\]
where $C_{\rm v}=C\phi^{-1}L_f\zeta_hh^{-1}$, and denote $E_G^n=\|e_S^n\|_{2,\mathcal G_h^n}.$

\end{lemma}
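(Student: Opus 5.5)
Part (i) is immediate from the bounds-preserving property. Since $e_S^n=S_w^{h,n}-S_w^{H,n}$ and both saturations lie in $[0,1]$ (Theorem 4.1 for the multiscale solution, the analogous result of \cite{ChenSun2021PIMPES} for the fine one), we have $|e_S^n|\le 1$ pointwise. Hence
\[
\|e_S^n\|_{L^2(\Omega_{\mathcal B}^n)}^2\le |\Omega_{\mathcal B}^n|\le \delta_h .
\]

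For (ii), I will write $\mathcal L_u S_1-\mathcal L_u S_2$ cellwise; the source term cancels. The mean value theorem gives $f_w(S_{1,K})-f_w(S_{2,K})=f_w'(\theta_K)(S_1-S_2)_K$ with $|f_w'|\le L_f$. For $K^h\in\mathcal G_h^{n+1}$, the difference is then a combination of $d=S_1-S_2$ on $K^h$ and on its upwind neighbors. By the stencil closure property, all of these cells lie in $\mathcal G_h^n$. The cellwise expression is
\[
d_K\Bigl(1-\tfrac{\Delta t}{\phi|K|}\sum_F|F|u^+_F f_w'(\theta_K)\Bigr)-\tfrac{\Delta t}{\phi|K|}\sum_F|F|u_F^-f_w'(\theta_{K^F})d_{K^F}.
\]
I will rewrite $u_F^-f_w'(\theta_{K^F})$ as $u_F^-f_w'(\theta_K)$ plus a correction, and use the same splitting for the outflow part. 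The main part is then a monotone upwind combination: under the CFL condition its coefficients are nonnegative, and summing them gives $1-\tfrac{\Delta t}{\phi|K|}f_w'(\theta_K)\sum_F|F|u_F$. The discrete divergence of $u$ on $K$ equals $(F_t,1_K)/|K|$, so this sum is at most $1+C\Delta t\,\phi^{-1}L_f\|F_t\|_\infty$. Squaring, applying Jensen/Cauchy–Schwarz, and summing over cells then gives the factor $(1+C\Delta t\,\phi^{-1}L_f\|F_t\|_\infty)$ for this part, since each cell appears in a bounded number of stencils.

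The correction term carries $f_w'(\theta_{K^F})-f_w'(\theta_K)$. This difference is controlled by the jumps of $S_1$ and $S_2$ across $F$, which in the smooth region are at most $\zeta_h\omega_{n,F}^{-1/2}\le\zeta_h\kappa_{\min}^{-1/2}$ by (4.11). This yields the term $\Delta t\,\phi^{-1}L_f\zeta_h h^{-1}\kappa_{\min}^{-1/2}\|u\|_\infty|d|$, using $|F|/|K|\sim h^{-1}$. Adding the two pieces gives (4.14).

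This correction step is the main obstacle. Strictly it requires a Lipschitz bound on $f_w'$ rather than only on $f_w$. I would first try to avoid second derivatives by rearranging the flux difference as a discrete product rule, $u_F(f(S_{1,K^F})-f(S_{2,K^F}))-u_F(f(S_{1,K})-f(S_{2,K}))$. The cross terms then pair a bounded jump of $S_i$, which is at most $\zeta_h/\sqrt{\kappa_{\min}}$, with $d$. If that fails, I will absorb $\sup|f_w''|$ into the constant $L_f$.

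For (iii), I will write the multiscale update as $S_w^{H,n+1}=\mathcal L_{u^H}S_w^{H,n}$ and the fine update as $S_w^{h,n+1}=\mathcal L_{u^h}S_w^{h,n}$, with both velocities taken at time $n+1$. Then
\[
e_S^{n+1}=\bigl(\mathcal L_{u^h}S^{h,n}-\mathcal L_{u^h}S^{H,n}\bigr)+\bigl(\mathcal L_{u^h}S^{H,n}-\mathcal L_{u^H}S^{H,n}\bigr).
\]
Part (ii), restricted to $\mathcal G_h^{n+1}$, bounds the first term by $(1+C_{\rm st}\Delta t)E_G^n$.

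The second term is a flux difference. Thanks to the same-direction assumption (4.10), the upwind cells coincide for $u^h$ and $u^H$, so it equals $\tfrac{\Delta t}{\phi|K|}\sum_F|F|(u^h-u^H)_F f_w(S^{H,n}_{\rm up})$. I will rewrite it by subtracting $f_w(S^{H,n}_K)\sum_F|F|(u^h-u^H)_F$. This subtracted quantity vanishes because both velocities have the same discrete divergence $F_t$ on each fine cell, by (4.7) and (4.9), which hold for all $q\in Q_h$ thanks to postprocessing. What remains involves $f_w(S_{\rm up})-f_w(S_K)$, bounded by $L_f\zeta_h(\omega_{n,F}^H)^{-1/2}$. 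The factor $(\omega^H)^{-1/2}$ combines with $(u^h-u^H)$ to give the weighted $a_n^H$ norm, once RT0 normal traces are bounded by cell values via inverse and trace estimates on the fine mesh. Summing over cells with Cauchy–Schwarz then gives $C\phi^{-1}L_f\zeta_h h^{-1}\Delta t\|e_{u_t}^{n+1}\|_{a_n^H}$. The triangle inequality completes (4.15).
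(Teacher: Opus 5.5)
Your parts (i) and (iii) follow the paper's proof step for step: the bound $|e_S^n|\le 1$, the splitting of $e_S^{n+1}$, the cellwise identity $\sum_F|F|\,e_{u_t}^{n+1}\cdot n=0$ used to subtract $f_w(S^{H,n}_{K^h})$, the inflow bound $L_f\zeta_h(\omega^H_{n,F})^{-1/2}$, and the RT$_0$ inverse-trace estimate.

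The gap is in (ii). You claim that, after Jensen, summing the ``main part'' over cells gives the factor $1+C\Delta t\,\phi^{-1}L_f\|F_t\|_\infty$ ``since each cell appears in a bounded number of stencils.'' Overlap counting cannot give this. Collect the weight of $|d_{K'}|^2$ over all cells. From $K'$ itself you get $\phi|K'|-\Delta t\sum_{F\ \mathrm{out}}|F|u_Ff_w'(\theta_{K'})$. From each downstream neighbour $K$ you get $\Delta t\,|F|u_Ff_w'(\theta_K)$. Each off-diagonal weight is of relative size $\Delta t/h$ compared with $\phi|K'|$ (a CFL number). So bounded overlap only gives $\phi|K'|(1+C\Delta t/h)$, and iterating over $T/\Delta t$ steps would give $e^{CT/h}$.

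The $1+O(\Delta t)$ factor comes from an exact cancellation: the outflow deficit in the diagonal coefficient of $K'$ is matched by its inflow contributions to the downstream cells. This is the paper's conservative-flux identity (4.18). It holds exactly only when the inflow coefficient in $K$ carries the divided difference $\theta_{K'}$ of the upstream cell. That is why the paper does not split off a correction term. It keeps $\theta_i$ in the convex combination and uses the add-and-subtract of $\theta_{K^h}$ only to bound the coefficient sum, $r_{K^h}\le 1+C_{\rm st}\Delta t$. It then applies Jensen with the true coefficients and telescopes via (4.18). Restricting to $\mathcal G_h^{n+1}$ just drops nonnegative terms and leaves only $d_{K'}$ with $K'\in\mathcal G_h^n$.

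With your replaced coefficients $f_w'(\theta_K)$, (4.18) fails by $\Delta t\sum|F|u_F\bigl(f_w'(\theta_K)-f_w'(\theta_{K'})\bigr)|d_{K'}|^2$. You can repair this by estimating that difference again with (4.11), which yields exactly the $\zeta_h/(h\sqrt{\kappa_{\min}})$ part of $C_{\rm st}$. But this near-cancellation argument, not stencil counting, is the missing ingredient, and it is simpler to keep the upstream coefficients as the paper does.

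Your concern about second derivatives is justified and applies to the paper as well. Its bound $|\theta_i-\theta_{K^h}|\le CL_f\bigl(|S_1|_{K^{h,i}}-S_1|_{K^h}|+|S_2|_{K^{h,i}}-S_2|_{K^h}|\bigr)$ compares two divided differences, so it tacitly requires $f_w'$ to be Lipschitz. The discrete product rule does not remove this. The mixed second difference $f(a_1)-f(b_1)-f(a_2)+f(b_2)$ is not bounded by (jump)$\times|a_1-b_1|$ for merely Lipschitz $f$; a kink lying between $S_1$ and $S_2$ is a counterexample. So your fallback of absorbing $\sup|f_w''|$ into the constant is the right choice. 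It is harmless for the smooth rational fractional-flow functions used in the paper.
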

\begin{proof}
We prove the estimates in four parts.

Step 1, since the scheme is bounds-preserving, we have
\(0\le S_w^{h,n},S_w^{H,n}\le 1\). Hence
\[
 |e_S^n|=|S_w^{h,n}-S_w^{H,n}|\le 1,
\]
and therefore
\[
\|e_S^n\|_{L^2(\Omega_{\mathcal B}^n)}^2
\le |\Omega_{\mathcal B}^n|
\le  \delta_h .
\]
This gives
\[
\|e_S^n\|_{L^2(\Omega_{\mathcal B}^n)}
\le  \delta_h ^{1/2},
\]
which proves (i).

Step 2: Saturation stability on the smooth region. Set $d_{K^h}=S_1|_{K^h}-S_2|_{K^h}$, $D_{K^h}=(\mathcal L_u S_1-\mathcal L_u S_2)_{K^h}.$
 For \(K^h\in\mathcal G_h^{n+1}\), the closure property implies that every
cell entering the update of \(K^h\) belongs to \(\mathcal G_h^n\).
From \eqref{4.12}, we have
\begin{align*}
    D_{K^h}=d_{K^h}-
\frac{\Delta t}{\phi |K^h|}
\sum_{F=K^h\cap K^{h,i}}^{N_{neigh}}
|F|
[
u_{K^{h,F}}^{+}&(f_w(S_1|_{K^{h}})-f_w(S_2|_{K^{h}})) 
\\&+u_{K^{h,F}}^{-}(f_w(S_1|_{K^{h,i}})-f_w(S_2|_{K^{h,i}}))
].
\end{align*}

Denote $\theta_{K^h}=\frac {f_w(S_1|_{K^{h}})-f_w(S_2|_{K^{h}})}{d_{K^h}}$, $\theta_{i}=\frac{f_w(S_1|_{K^{h,i}})-f_w(S_2|_{K^{h,i}})}{d_{K^{h,i}}}$ and define
\[
r_{K^h}
=
1
+
\frac{\Delta t}{\phi |K^h|}
(
\sum_{\substack{F\subset\partial K^h\\ u_{K^{h,F}}<0}}
|F|(-u_{K^{h,F}})\theta_{i}
-
\sum_{\substack{F\subset\partial K^h\\ u_{K^{h,F}}>0}}
|F|u_{K^{h,F}}\theta_{K^h}
).
\]
Adding and subtracting $\sum\limits_{\substack{F\subset\partial K^h\\ u_{K^{h,F}}<0}}|F|u_{K^{h,F}}\theta_{K^h}$, we obtain
\[
r_{K^h}
=1-\frac{\Delta t}{\phi |K^h|}\theta_{K^h}\sum_{F\subset\partial K^h}|F|u_{K_h,F}+
\frac{\Delta t}{\phi |K_h|}
\sum_{\substack{F\subset\partial K^h\\ u_{K^{h,F}}<0}}
|F|(-u_{K^{h,F}})(\theta_{i}-\theta_{K^h}).
\]
Since \(u\in RT_0\),
\[
\sum_{F\subset\partial K^h}|F|u_{K^{h,F}}
=\int_{\partial K^h}u\cdot n
=
\int_{K^h}\nabla\cdot u
=\int_{K^h} F_t\]
And by the Lipschitz condition in $f_w$, we obtain
\[
\left|
\frac{1}{|K^h|}
\theta_{K^h}
\sum_{F\subset\partial K^h}|F|u_{K^{h,F}}
\right|
\le
L_f\|F_t\|_{L^\infty(\Omega)}.
\]
Moreover,
\[
|\theta_{i}-\theta_{K^h}|
\le
C L_{f}
(
\left|S_1|_{K^{h,i}}-S_1|_{K^h}\right|
+
\left|S_2|_{K^{h,i}}-S_2|_{K^h}
\right|).
\]
Using the smooth-region assumption (4.11), we obtain
\[
|\theta_{i}-\theta_{K^h}|
\le
\frac{C L_{f} \zeta_h}{\sqrt{\kappa_{min}}} .
\]
Therefore,
\[
\frac{1}{|K^h|}
\sum_{\substack{F\subset\partial K^h\\ u_{K^{h,F}}<0}}
|F|(-u_{K^{h,F}})|\theta_{i}-\theta_{K^h}|
\le C
\frac{ L_{f}\zeta_h}{h\sqrt{\kappa_{min}}}\|u\|_{L^\infty(\Omega)}.
\]
Combining above discussions gives
\[
r_{K^h}
\le
1+C_{\rm st}\Delta t.
\tag{4.16}
\]
Here
$
C_{\rm st}
=
C\phi^{-1}
\left(
L_f\|F_t\|_{L^\infty(\Omega)}
+
\frac{L_{f}\zeta_h}{h\sqrt{\kappa_{min}}}\|u\|_{L^\infty(\Omega)}
\right)$, \(C\) only depends on the shape regularity of the mesh.

Equivalently, using the expression of \(D_{K^h}\), we can write
\[
\begin{aligned}
D_{K^h}
=\left(1-\frac{\Delta t}{\phi |K^h|}
\sum_{\substack{F\subset\partial K^h\\ u_{K^{h,F}}>0}}
|F|u_{K^{h,F}}\theta_{K^h}
\right)d_{K^h}
+
\frac{\Delta t}{\phi |K^h|}
\sum_{\substack{F\subset\partial K^h\\ u_{K^{h,F}}<0}}
|F|(-u_{K^{h,F}})\theta_i d_{K^{h,i}}.
\end{aligned}
\]
By the CFL condition, all coefficients in the above convex combination
are nonnegative. Then applying Jensen's inequality in $\varphi(t)=t^2$ gives
\[
\begin{aligned}
|D_{K^h}|^2
&\le
r_{K^h}
\Bigg[
\Bigg(
1-
\frac{\Delta t}{\phi |K^h|}
\sum_{\substack{F\subset\partial K^h\\ u_{K^{h,F}}>0}}
|F|u_{K^{h,F}}\theta_{K^h}
\Bigg)|d_{K^h}|^2
+
\frac{\Delta t}{\phi |K^h|}
\sum_{\substack{F\subset\partial K^h\\ u_{K^{h,F}}<0}}
|F|(-u_{K^{h,F}})\theta_i |d_{K^{h,i}}|^2
\Bigg].
\end{aligned}
\]
 Using (4.16), we further get
\begin{align*}
|D_{K^h}|^2
&\le
(1+C_{\rm st}\Delta t)
\Bigg[
\Bigg(
1-
\frac{\Delta t}{\phi |K^h|}
\sum_{\substack{F\subset\partial K^h\\ u_{K^{h,F}}>0}}
|F|u_{K^h,F}\theta_{K^h}
\Bigg)|d_{K^h}|^2
\\
&\qquad\qquad
+
\frac{\Delta t}{\phi |K^h|}
\sum_{\substack{F\subset\partial K^h\\ u_{K^{h,F}}<0}}
|F|(-u_{K^{h,F}})\theta_i |d_{K^{h,i}}|^2
\Bigg].
\end{align*}

Multiplying \(\phi |K^h|\) and summing over
\(K^h\in\mathcal G_h^{n+1}\), we obtain
\[
\begin{aligned}
\sum_{K^h\in\mathcal G_h^{n+1}}
\phi |K^h||D_{K^h}|^2
\le&
(1+C_{\rm st}\Delta t)
\sum_{K^h\in\mathcal G_h^{n+1}}
\phi |K^h|
\Bigg(1-
\frac{\Delta t}{\phi |K^h|}
\sum_{\substack{F\subset\partial K^h\\ u_{K^{h,F}}>0}}
|F|u_{K^{h,F}}\theta_{K^h}
\Bigg)|d_{K^h}|^2
\\
&\quad
+
(1+C_{\rm st}\Delta t)
\sum_{K^h\in\mathcal G_h^{n+1}}
\Delta t
\sum_{\substack{F\subset\partial K^h\\ u_{K^{h,F}}<0}}
|F|(-u_{K^{h,F}})\theta_i |d_{K^{h,i}}|^2 .
\end{aligned}
\tag{4.17}
\]

We now use the conservation of the finite volume flux in full mesh under the zero Neumann boundary condition:
\[
\begin{aligned}
&\sum_{K^h\in\mathcal T_h}
\phi |K^h|
\left(
1-
\frac{\Delta t}{\phi |K^h|}
\sum_{\substack{F\subset\partial K^h\\ u_{K^{h,F}}>0}}
|F|u_{K^{h,F}}\theta_{K^h}
\right)|d_{K^h}|^2
\\
&\quad
+
\sum_{K^h\in\mathcal T_h}
\Delta t
\sum_{\substack{F\subset\partial K^h\\ u_{K^{h,F}}<0}}
|F|(-u_{K^{h,F}})\theta_i |d_{K^{h,i}}|^2
=
\sum_{K^h\in\mathcal T_h}
\phi |K^h||d_{K^h}|^2 .
\end{aligned}
\tag{4.18}
\]
If non-zero Neumann boundary conditions are prescribed, the same argument is
applied after subtracting the common boundary contribution.

Since all the terms in the two sums on the left-hand side of (4.18) are
nonnegative, the restricted sum over \(K^h\in\mathcal G_h^{n+1}\) is no larger
than the full-grid sum. Moreover, by the closure property of the smooth region,
only the values \(d_{K^h}\) with \(K^h\in\mathcal G_h^n\) can appear in the
restricted update. Thus (4.17) and (4.18) imply
\[
\sum_{K^h\in\mathcal G_h^{n+1}}
\phi |K^h||D_{K^h}|^2
\le
(1+C_{\rm st}\Delta t)
\sum_{K^h\in\mathcal G_h^n}
\phi |K^h||d_{K^h}|^2 ,
\]
which is equivalent to
\[
\|\mathcal L_uS_1-\mathcal L_uS_2\|_{2,\mathcal G_h^{n+1}}^2
\le
(1+C_{\rm st}\Delta t)
\|S_1-S_2\|_{2,\mathcal G_h^n}^2 .
\]
which gives:
\[
\|\mathcal L_uS_1-\mathcal L_uS_2\|_{2,\mathcal G_h^{n+1}}
\le
(1+C_{\rm st}\Delta t)
\|S_1-S_2\|_{2,\mathcal G_h^n} .
\]
This proves the stability estimate on the smooth region.

Step 3: Velocity perturbation on the smooth region.
Let
\[
e_{u_t}^{n+1}=u_t^{h,n+1}-u_t^{H,n+1}.
\]
By \eqref{4.7} and \eqref{4.9},
\[
b(e_{u_t}^{n+1},q)=0,
\qquad
\forall q\in Q_h.
\]
Thus, on each fine cell \(K^h\),
\[
\sum_{F\subset\partial K^h}|F|\,e_{u_t}^{n+1}\cdot n =0.
\tag{4.19}
\]

Because of the same-direction assumption (4.10), the upwind cell is the same
for \(u_t^{h,n+1}\) and \(u_t^{H,n+1}\). Therefore, for
\(S=S_w^{H,n}\),
\[
\begin{aligned}
&
(\mathcal L_{u_t^{h,n+1}}S-
\mathcal L_{u_t^{H,n+1}}S)_{K^h}
=
-\frac{\Delta t}{\phi |K^h|}
\sum_{F\subset\partial K^h}
|F|\,f_w(S_{w}^{*,H})\,e_{u_t}^{n+1}\cdot n_{K^{h,F}}.
\end{aligned}
\tag{4.20}
\]
Using (4.19), we subtract \(f_w(S_{K^h}^{H,n})\) and obtain
\[
\begin{aligned}
\sum_{F\subset\partial K^h}
|F|f_w(S_{w}^{*,H})e_{u_t}^{n+1}\cdot n_{K^{h,F}}
=
\sum_{F\subset\partial K^h}
|F|
\left[
f_w(S_{w}^{*,H})-f_w(S_{K^h}^{H,n})
\right]
e_{u_t}^{n+1}\cdot n_{K^{h,F}}.
\end{aligned}
\]
On an outflow edge, \(S_{w}^{*,H}=S_{K^h}^{H,n}\), so the corresponding term
vanishes. On an inflow edge \(F=K^h \cap K^{h,i}\), the upwind value is \(S_{K^{h,i}}\).
Since we are on the smooth stencil,
\[
|S_{K^{h,i}}^{H,n}-S_{K^h}^{H,n}|
\le \frac{\zeta_h}{\sqrt{\omega_{n,F}^H}}. 
\]
Thus
\[
|f_w(S_{K^{h,i}}^{H,n})-f_w(S_{K^h}^{H,n})|
\le
\frac{L_f\zeta_h}{\sqrt{\omega_{n,F}^H}}.
\]
Consequently,
\[
\left|
\sum_{F\subset\partial K^h}
|F|f_w(S_{w}^{*,H})e_{u_t}^{n+1}\cdot n
\right|
\le
C L_f\zeta_h
\sum_{F\subset\partial K^h}
\frac{|F|}{\sqrt{\kappa_n^H} }\,|e_{u_t}^{n+1}\cdot n|.
\]
Then, by Cauchy's inequality, we have
\[
\begin{aligned}
\left(
\sum_{F\subset\partial K^h}
|F|(\kappa_n^H)^{-\frac12}
\left|e_{u_t}^{n+1}\cdot n\right|
\right)^2 &\le
\left(\sum_{F\subset\partial K^h}|F|\right)
\left(
\sum_{F\subset\partial K^h}
|F|(\kappa_n^H)^{-1}
\left|e_{u_t}^{n+1}\cdot n\right|^2
\right)  \\
& \le
C h
\sum_{F\subset\partial K^h}
|F|(\kappa_n^H)^{-1}
\left|e_{u_t}^{n+1}\cdot n\right|^2 .
\end{aligned}
\]
Applying the inverse trace estimate for \(RT_0\) functions gives
\[
\begin{aligned}
h\sum_{F\subset\partial K^h}
|F|(\kappa_n^H)^{-1}
\left|e_{u_t}^{n+1}\cdot n\right|^2  \le
C\left\|e_{u_t}^{n+1}\right\|_{a_n^H(K^h)}^2 .
\end{aligned}
\]
Consequently,
\[
\frac{1}{|K^h|}
\left|
\sum_{F\subset\partial K^h}
|F| f_w(S_F^{*,H})
e_{u_t}^{n+1}\cdot n
\right|^2
\le
\frac {C L_f^2 \zeta_h^2}{h^2}
\left\|e_{u_t}^{n+1}\right\|_{a_n^H(K^h)}^2 .
\]
Using (4.20), summing over \(K^h\in\mathcal G_h^{n+1}\), and using the
finite-overlap property of neighboring cells, we get
\[
\begin{aligned}
&
\|\mathcal L_{u_t^{h,n+1}}S-
\mathcal L_{u_t^{H,n+1}}S\|_{2,\mathcal G_h^{n+1}}^2
\le
C\Delta t^2 h^{-2}\phi^{-2}L_f^2\zeta_h^2
\|e_{u_t}^{n+1}\|_{a_n^H(\Omega)}^2.
\end{aligned}
\tag{4.21}
\].

Step 4: Smooth-region recursion. The fine-scale and multiscale saturation updates are
\[
S_w^{h,n+1}
=
\mathcal L_{u_t^{h,n+1}}S_w^{h,n},
\qquad
S_w^{H,n+1}
=
\mathcal L_{u_t^{H,n+1}}S_w^{H,n}.
\]
Therefore,
\[
\begin{aligned}
e_S^{n+1}
&=
\mathcal L_{u_t^{h,n+1}}S_w^{h,n}
-
\mathcal L_{u_t^{H,n+1}}S_w^{H,n}
\\
&=
\left(
\mathcal L_{u_t^{h,n+1}}S_w^{h,n}
-
\mathcal L_{u_t^{h,n+1}}S_w^{H,n}
\right)+
\left(
\mathcal L_{u_t^{h,n+1}}S_w^{H,n}
-
\mathcal L_{u_t^{H,n+1}}S_w^{H,n}
\right).
\end{aligned}
\]
Taking the \(\|\cdot\|_{2,\mathcal G_h^{n+1}}\)-norm and applying
(4.14) and (4.21) yields
\[
E_G^{n+1}
\le
(1+C_{\rm st}\Delta t)E_G^n
+
C_{\rm v}\Delta t
\|e_{u_t}^{n+1}\|_{a_n^H(\Omega)},
\]
where $C_{\rm v}=C\phi^{-1}L_f\zeta_hh^{-1}$.
\end{proof}

\begin{lemma}
Recall that \(i\) denote the last time level at which the
multiscale space was updated, and \(n\) is the current time level. $\mathbf{u}_t^{h,i+1}$ and $\mathbf{u}_t^{h,n+1}$ are solutions to (\ref{4.6})-(\ref{4.7}) at corresponding time levels. The following estimation holds:
\begin{align*}
\|\mathbf{u}_t^{h,n+1} -  \mathbf{u}_t^{h,i+1} \|_{a_n} &\leq 2 \|{\mathbf{u}_t^h}\|_{L^\infty(0,T ; \Omega)}  \|\kappa_i^{-\frac{1}{2}}-\kappa_n^{-\frac{1}{2}}\|_{L^2(\Omega)}
\end{align*}
\label{lemma 4.4}
\end{lemma}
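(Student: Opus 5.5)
The plan is an energy argument for the difference of the two fine-scale velocities. The key structural fact is that the divergence constraint \eqref{4.7} has the same right-hand side $(F_t,q)$ at both time levels, so the difference is discretely divergence free. Write $e:=\mathbf{u}_t^{h,n+1}-\mathbf{u}_t^{h,i+1}\in V_h$, $\delta:=\kappa_i^{-\frac12}-\kappa_n^{-\frac12}$ and $M:=\|\mathbf{u}_t^h\|_{L^\infty(0,T;\Omega)}$. We treat $\kappa$ as a scalar (isotropic) coefficient, so that $\kappa^{-1/2}$ commutes with everything; for a tensor $\mathbf K$ the same argument would need matrix square roots and more care. Subtracting \eqref{4.7} at the two levels gives $b(e,q)=0$ for all $q\in Q_h$.

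\textbf{Step 1: two orthogonality identities.} Test \eqref{4.6} at level $n+1$ (coefficient $\kappa_n$) with $v=e$. Since $b(e,p_w^{h,n+1})=0$, this gives $a_n(\mathbf{u}_t^{h,n+1},e)=0$. Testing \eqref{4.6} at level $i+1$ (coefficient $\kappa_i$) with $v=e$ gives $a_i(\mathbf{u}_t^{h,i+1},e)=0$ in the same way. Using these, together with $\kappa_i^{-1}-\kappa_n^{-1}=\delta\,(\kappa_i^{-\frac12}+\kappa_n^{-\frac12})$, we get
\[
a_n(e,e)=-a_n(\mathbf{u}_t^{h,i+1},e)=a_i(\mathbf{u}_t^{h,i+1},e)-a_n(\mathbf{u}_t^{h,i+1},e)=\int_\Omega \delta\,(\kappa_i^{-\frac12}+\kappa_n^{-\frac12})\,\mathbf{u}_t^{h,i+1}\cdot e .
\]
Symmetrically,
\[
a_i(e,e)=a_i(\mathbf{u}_t^{h,n+1},e)-a_n(\mathbf{u}_t^{h,n+1},e)=\int_\Omega \delta\,(\kappa_i^{-\frac12}+\kappa_n^{-\frac12})\,\mathbf{u}_t^{h,n+1}\cdot e .
\]

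\textbf{Step 2: Cauchy--Schwarz.} Set $X=\|\kappa_n^{-1/2}e\|_{L^2}=\|e\|_{a_n}$ and $Y=\|\kappa_i^{-1/2}e\|_{L^2}$. Pulling out $\|\mathbf{u}_t^h\|_{L^\infty}\le M$ and applying Cauchy--Schwarz to each identity gives
\[
X^2\le M\|\delta\|_{L^2}(X+Y),\qquad Y^2\le M\|\delta\|_{L^2}(X+Y).
\]

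\textbf{Step 3: closing the estimate.} This step is the main obstacle. A single identity always produces the mixed quantity $Y$, which is measured in the wrong ($\kappa_i$-weighted) norm. Using only the first inequality and trying to absorb $Y$ into $X$ would cost a ratio $\kappa_n/\kappa_i$, which can be large in high contrast. The second identity is exactly what controls $Y$. Adding the two inequalities and using $X+Y\le\sqrt2\,(X^2+Y^2)^{1/2}$ gives
\[
(X^2+Y^2)^{1/2}\le 2\sqrt2\,M\|\delta\|_{L^2}.
\]
Substituting this back into the first inequality yields
\[
X^2\le M\|\delta\|_{L^2}\cdot\sqrt2\cdot 2\sqrt2\,M\|\delta\|_{L^2}=4M^2\|\delta\|_{L^2}^2,
\]
that is, $\|e\|_{a_n}\le 2M\|\kappa_i^{-\frac12}-\kappa_n^{-\frac12}\|_{L^2(\Omega)}$, which is the claimed bound with constant $2$. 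The only remaining points to check are that $M$ is finite, which follows from the $L^\infty$-in-time assumption on $\mathbf u_t^h$, and that the zero boundary data make the $b_D$ terms vanish, so that Step 1 is valid.
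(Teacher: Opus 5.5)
Your proof is correct and reaches the same constant $2$, but it closes the estimate differently from the paper. The two orthogonality relations you derive, $a_n(\mathbf u_t^{h,n+1},e)=0$ and $a_i(\mathbf u_t^{h,i+1},e)=0$, are exactly the paper's (4.23)--(4.24). The difference lies in how they are used.

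The paper never tests against $e$. Instead it adds and subtracts $\kappa_i^{-1/2}\mathbf u_t^{h,i+1}$:
\[
\|e\|_{a_n}\le\bigl\|\kappa_n^{-\frac12}\mathbf u_t^{h,n+1}-\kappa_i^{-\frac12}\mathbf u_t^{h,i+1}\bigr\|_{L^2(\Omega)}+\|\delta\,\mathbf u_t^{h,i+1}\|_{L^2(\Omega)} .
\]
Expanding the square of the first term with (4.23)--(4.24) gives the exact identity $\int_\Omega\delta^2\,\mathbf u_t^{h,n+1}\cdot\mathbf u_t^{h,i+1}$. Each term is therefore at most $M\|\delta\|_{L^2}$ directly. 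In this splitting each velocity carries its own weight. So the mixed quantity $Y=\|e\|_{a_i}$, which you identify as the main obstacle, never appears, and no coupled system of inequalities has to be resolved. The paper's route is shorter.

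Yours is the more standard energy argument. Testing with $e$ brings in both weighted norms at once, and the symmetric pair $X^2,Y^2\le M\|\delta\|_{L^2}(X+Y)$ resolves to $X,Y\le 2M\|\delta\|_{L^2}$. You thus get the bound in the $a_n$- and $a_i$-norms simultaneously. The paper would obtain the $a_i$ version by a second splitting, through $\kappa_n^{-1/2}\mathbf u_t^{h,n+1}$.

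Your explicit caveat that $\kappa$ is treated as a scalar is an assumption the paper also makes, but only tacitly.
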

\begin{proof}

 First we have 
 \begin{align*}
\|\mathbf{u}_t^{h,n+1} -  \mathbf{u}_t^{h,i+1} \|_{a_n} \leq \|\kappa_n^{-\frac{1}{2}}\mathbf{u}_t^{h,n+1}- \kappa_i^{-\frac{1}{2}}\mathbf{u}_t^{h,i+1}\|_{L^2(\Omega)}+\|\kappa_i^{-\frac{1}{2}}\mathbf{u}_t^{h,i+1}- \kappa_n^{-\frac{1}{2}}\mathbf{u}_t^{h,i+1}\|_{L^2(\Omega)}
\end{align*} 

For the second term on the right-hand side, we directly obtain 
\begin{align*}
    \|\kappa_i^{-\frac{1}{2}}\mathbf{u}_t^{h,i+1}- \kappa_n^{-\frac{1}{2}}\mathbf{u}_t^{h,i+1}\|_{L^2(\Omega)}\leq \|{\mathbf{u}_t^h}\|_{L^\infty(0,T ; \Omega)}  \|\kappa_n^{-\frac{1}{2}}-\kappa_i^{-\frac{1}{2}}\|_{L^2(\Omega)}
    \tag{4.22}
\end{align*}

For the first term, we recall the mixed variational forms at $t_{n}$ and $t_i$. By taking $\mathbf{v} = \mathbf{u}_t^{h,n+1}$ in (\ref{4.6}) and $q = p_h^{h,n+1}$ in (\ref{4.7}), we have:
\begin{equation*}
\int_{\Omega} \kappa_n^{-1} |\mathbf{u}_t^{h,n+1}|^2 = \int_{\Omega} F_t p_w^{h,n+1}
\end{equation*}
By choosing $\mathbf{v} = \mathbf{u}_t^{h,i+1}$ and $q = p_w^{h,n+1}$ as test functions in the corresponding systems at different time levels, we obtain relations:
\begin{equation}
\int_{\Omega} \kappa_n^{-1} \mathbf{u}_t^{h,n+1} \cdot \mathbf{u}_t^{h,i+1} = \int_{\Omega} \kappa_n^{-1} |\mathbf{u}_t^{h,n+1}|^2, 
\tag{4.23}
\end{equation}
and similarly,
\begin{equation}
\int_{\Omega} \kappa_i^{-1} \mathbf{u}_t^{h,i+1} \cdot \mathbf{u}_t^{h,n+1} = \int_{\Omega} \kappa_i^{-1} |\mathbf{u}_t^{h,i+1}|^2. 
\tag{4.24}
\end{equation}
Using the relations (4.23) and (4.24), the difference in weighted $L^2$ norms can be expanded and simplified as:
\begin{align}
\int_{\Omega} |\kappa_n^{-\frac{1}{2}} \mathbf{u}_t^{h,n+1} - \kappa_i^{-\frac{1}{2}} \mathbf{u}_t^{h,i+1}|^2 &= \int_{\Omega} |\kappa_n^{-\frac{1}{2}}-\kappa_i^{-\frac{1}{2}}|^2 |\mathbf{u}_t^{h,n+1} \cdot \mathbf{u}_t^{h,i+1}| \nonumber \\
&\le \|\mathbf{u}_t^h\|_{L^\infty(0,T ; \Omega)}^2 \|\kappa_n^{-\frac{1}{2}}-\kappa_i^{-\frac{1}{2}}\|_{L^2(\Omega)}^2 
\tag{4.25}
\end{align}

Combined with (4.22) and (4.25), we get the conclusion.
\end{proof}

\begin{lemma}
Assume at time level $t_i$, the multiscale space is newly updated, let $\mathbf u_t^{h,i+1}$ denote the fine-scale solution of
\eqref{4.6}--\eqref{4.7}  and $\mathbf u_t^{H,i+1}$ be the postprocessed multiscale velocity obtained from \eqref{4.8}--\eqref{4.9}.  Then
\begin{equation*}
\|\mathbf u_t^{h,i+1}-\mathbf u_t^{H,i+1}\|_{a_i^H}
\le
CH
\left(
\frac{1}{\sqrt{\Lambda}}
+\|F_t\|_{L^2(\Omega)}
\right)
+
C_\lambda
\|{\mathbf{u}_t^h}\|_{L^\infty(0,T ; \Omega)} 
\|e_S^i\|_{a_i^H} .
\end{equation*}
where $C$ is independent of the contrast and the mesh size.
\label{lemma 4.5}
\end{lemma}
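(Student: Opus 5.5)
The plan is to insert an intermediate fine-scale solution that uses the multiscale coefficient. Let $(\tilde{\mathbf u}^{i+1},\tilde p^{i+1})\in V_h\times Q_h$ solve \eqref{4.6}--\eqref{4.7} with $\kappa_i$ replaced by $\kappa_i^H=\lambda_t(S_w^{H,i})\mathbf K$. This is exactly the coefficient used to regenerate $Q_{ms}^i$ and $V_{ms}^i$ at time level $i$. Splitting by the triangle inequality,
\[
\|\mathbf u_t^{h,i+1}-\mathbf u_t^{H,i+1}\|_{a_i^H}\le \|\mathbf u_t^{h,i+1}-\tilde{\mathbf u}^{i+1}\|_{a_i^H}+\|\tilde{\mathbf u}^{i+1}-\mathbf u_t^{H,i+1}\|_{a_i^H}.
\]
The first term is a coefficient-perturbation error. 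The second is a pure multiscale discretization error with a matched coefficient.

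\textbf{Multiscale term.} The space is freshly built from $\kappa_i^H$, and \eqref{4.8}--\eqref{4.9} is the postprocessed mixed CEM-GMsFEM system with coefficient $\kappa_i^H$. We therefore invoke the mixed CEM-GMsFEM estimate of \cite{ChungEfendievLeung2018MixedCEM}, which gives
\[
\|\tilde{\mathbf u}-\mathbf u_{ms}\|_{a_i^H}\le CH\big(\Lambda^{-1/2}+\|F_t\|_{L^2}\big),
\]
with $\Lambda=\min_i\lambda^i_{J_i+1}$ and enough oversampling layers, contrast-independently. This estimate applies to the unpostprocessed $\mathbf u_{ms}\in V_{ms}^i$. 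For the postprocessed velocity, note that $\widetilde V_{ms}^{i,i+1}\supset V_{ms}^i$ and that \eqref{4.9} holds against all of $Q_h$. Hence $\tilde{\mathbf u}-\mathbf u_t^{H,i+1}$ is divergence free. Testing the difference of the first equations with $\mathbf w=\tilde{\mathbf u}-\mathbf u_t^{H,i+1}$ is not allowed directly, since $\tilde{\mathbf u}\notin\widetilde V_{ms}$. Instead, I would use the minimization characterization: $\mathbf u_t^{H,i+1}$ minimizes $\|\tilde{\mathbf u}-\mathbf w\|_{a_i^H}$ over divergence-constrained $\mathbf w\in\widetilde V_{ms}$ (Galerkin orthogonality on the divergence-free subspace). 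The unpostprocessed velocity corrected by the local solves is a competitor, and the local corrections are bounded by the local residuals. This gives the same bound up to a constant.

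\textbf{Perturbation term.} Let $\mathbf e=\mathbf u_t^{h,i+1}-\tilde{\mathbf u}^{i+1}$. Both velocities have divergence $F_t$, so $b(\mathbf e,q)=0$ for all $q$. Subtracting the two first equations and testing with $\mathbf e$ gives $a_i^H(\mathbf e,\mathbf e)=((\kappa_i^H)^{-1}-\kappa_i^{-1})\mathbf u_t^{h,i+1}\cdot\mathbf e$ integrated over $\Omega$. Writing $\kappa^{-1}=\lambda_t^{-1}\mathbf K^{-1}$ and using that $\lambda_t$ is Lipschitz and bounded below, we get pointwise
\[
|(\kappa_i^H)^{-1}-\kappa_i^{-1}|\le C_\lambda |S_w^{h,i}-S_w^{H,i}|\,(\kappa_i^H)^{-1/2}\mathbf K^{-1/2}.
\]
Cauchy--Schwarz then gives $\|\mathbf e\|_{a_i^H}\le C_\lambda\|\mathbf u_t^h\|_{L^\infty}\|(\kappa_i^H)^{-1/2}e_S^i\|_{L^2}$, which is $C_\lambda\|\mathbf u_t^h\|_{L^\infty}\|e_S^i\|_{a_i^H}$, interpreting the norm of the scalar error with the weight $(\kappa_i^H)^{-1}$. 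This mirrors the computation in Lemma~\ref{lemma 4.4}.

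\textbf{Main obstacle.} I expect the main obstacle to be the multiscale term after postprocessing. The cited estimate is for the unpostprocessed velocity with source projected onto $Q_{ms}$. Two extra ingredients are needed: showing the locally postprocessed field stays within $CH\|F_t\|$ of the fine solution, via a local stability bound for the Neumann subproblems on each marked $K$; and keeping $C$ contrast-independent, which requires the local postprocessing problems to be measured in the same $a_i^H$ weight.
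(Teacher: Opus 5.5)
Your proposal is correct and follows the paper's proof essentially step for step. It uses the same auxiliary fine-grid solution with coefficient $\kappa_i^H$, the mixed CEM-GMsFEM estimate for the unpostprocessed velocity, and a local $\mathrm{RT}_0$--$P_0$ stability bound of the postprocessing corrections by the fine-cell residual $F_t-\nabla\cdot\mathbf u_{\mathrm{ms}}^{H,i+1}$. It also uses the same Cauchy--Schwarz/Lipschitz estimate for the coefficient perturbation; your remark that $\mathbf u_t^{h,i+1}-\tilde{\mathbf u}^{i+1}$ is divergence free supplies the justification the paper leaves implicit when it tests with this difference.

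Your minimization characterization over $\widetilde V_{\mathrm{ms}}^{i,i+1}$ is valid given \eqref{4.8}--\eqref{4.9}. With the postprocessed field itself as competitor, however, it collapses to the paper's triangle inequality $\|\tilde{\mathbf u}^{i+1}-\mathbf u_t^{H,i+1}\|_{a_i^H}\le\|\tilde{\mathbf u}^{i+1}-\mathbf u_{\mathrm{ms}}^{H,i+1}\|_{a_i^H}+\|\mathbf u_{\mathrm{ms}}^{H,i+1}-\mathbf u_t^{H,i+1}\|_{a_i^H}$. It only adds something if you take a minimal-energy local divergence correction as the competitor, which makes the ``corrections bounded by residuals'' step immediate.
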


\begin{proof}
Let
$\mathbf u_{\mathrm{ms}}^{H,i+1}$ denote the mixed CEM velocity before
the selective postprocessing step. We introduce the auxiliary mixed
pair
\(
(\widetilde{\mathbf u}_t^{h,i+1},
 \widetilde p_w^{h,i+1})
\in V_h\times Q_h/\mathbb R
\)
satisfying
\begin{align*}
a_i^H
\left(
\widetilde{\mathbf u}_t^{h,i+1},\mathbf v
\right)
-
b
\left(
\mathbf v,\widetilde p_w^{h,i+1}
\right)
&=0,
&&
\forall\mathbf v\in V_h,
\nonumber \tag{4.26} \label{4.26}\\
b
\left(
\widetilde{\mathbf u}_t^{h,i+1},q
\right)
&=(F_t,q),
&&
\forall q\in Q_h.
\end{align*}
By Theorem~1 in \cite{ChungEfendievLeung2018MixedCEM}, we have
\begin{equation}
\left\|
\widetilde{\mathbf u}_t^{h,i+1}
-
\mathbf u_{\mathrm{ms}}^{H,i+1}
\right\|_{a_i^H}^2
\lesssim
H^2
\left(
\frac{1}{\Lambda}
+
\|F_t\|_{L^2(\Omega)}^2
\right).
\tag{4.27}
\label{4.27}
\end{equation}

We next estimate the perturbation induced by the selective
postprocessing. Since
\(
\mathcal A_{H,\mathrm{pt}}^{i,i+1}
=
\mathcal T_{H,\mathrm{pt}}^{i+1},
\)
we define
\[
\mathbf e_K
=
\left(
\mathbf u_t^{H,i+1}
-
\mathbf u_{\mathrm{ms}}^{H,i+1}
\right)|_K,
\qquad
K\in\mathcal A_{H,\mathrm{pt}}^{i,i+1},
\]
and set $\mathbf e_K=0$ for
$K\notin\mathcal A_{H,\mathrm{pt}}^{i,i+1}$.
Since the postprocessing preserves the normal flux on $\partial K$,
$\mathbf e_K\in V_{h,0}(K)$. Moreover, fine-grid conservation gives
\[
b_K(\mathbf e_K,q)
=
\left(
F_t-\nabla\cdot\mathbf u_{\mathrm{ms}}^{H,i+1},
q
\right)_K,
\qquad
\forall q\in Q_h(K).
\]
The local mixed stability of the
$\mathrm{RT}_0$--$P_0$ pair therefore yields
\(
\|\mathbf e_K\|_{a_i^H(K)}
\lesssim
H
\left\|
F_t-\nabla\cdot\mathbf u_{\mathrm{ms}}^{H,i+1}
\right\|_{L^2(K)}.
\) 
Consequently,
\begin{align}
\left\|
\mathbf u_t^{H,i+1}
-
\mathbf u_{\mathrm{ms}}^{H,i+1}
\right\|_{a_i^H}^2
=
\sum_{K\in\mathcal A_{H,\mathrm{pt}}^{i,i+1}}
\|\mathbf e_K\|_{a_i^H(K)}^2
\nonumber &\lesssim
H^2
\sum_{K\in\mathcal A_{H,\mathrm{pt}}^{i,i+1}}
\left\|
F_t-\nabla\cdot\mathbf u_{\mathrm{ms}}^{H,i+1}
\right\|_{L^2(K)}^2
\nonumber\\
&\le
H^2
\left\|
F_t-\nabla\cdot\mathbf u_{\mathrm{ms}}^{H,i+1}
\right\|_{L^2(\Omega)}^2
\lesssim
H^2\|F_t\|_{L^2(\Omega)}^2.
\tag{4.28}
\label{4.28}
\end{align}
Combining \eqref{4.27} and \eqref{4.28} gives
\[
\|\widetilde{\mathbf u}_t^{h,i+1}
-\mathbf u_t^{H,i+1}\|_{a_i^H}
\lesssim
H\left(\frac{1}{\sqrt{\Lambda}}
+\|F_t\|_{L^2(\Omega)}\right).
\tag{4.29}
\]

It remains to estimate
\(\|\mathbf u_t^{h,i+1}-\mathbf u_t^{H,i+1}\|_{a_i^H}\).
By the triangle inequality, we have
\[
\begin{aligned}
\|\mathbf u_t^{h,i+1}-\mathbf u_t^{H,i+1}\|_{a_i^H} \le
\|\mathbf u_t^{h,i+1}-\tilde{\mathbf u}_t^{h,i+1}\|_{a_i^H}
+
\|\tilde{\mathbf u}_t^{h,i+1}-\mathbf u_t^{H,i+1}\|_{a_i^H},
\end{aligned}
\tag{4.30}
\]
For the first term on the right-hand side, using \eqref{4.6} and \eqref{4.26}, we have
\[
a_i(\mathbf u_t^{h,i+1},v)
=
a_i^H(\tilde{\mathbf u}_t^{h,i+1},v),
\qquad \forall v\in V_h .
\]
Subtracting \(a_i^H(\mathbf u_t^{h,i+1},v)\) from both sides and taking
\(v=\tilde{\mathbf u}_t^{h,i+1}-\mathbf u_t^{h,i+1}\), we obtain
\[
\begin{aligned}
\|\tilde{\mathbf u}_t^{h,i+1}-\mathbf u_t^{h,i+1}\|_{a_i^H}^2
&=
\int_{\Omega}
\left(
\frac{1}{\kappa_i}
-
\frac{1}{\kappa_i^H}
\right)
\mathbf u_t^{h,i+1}
\cdot
\left(
\tilde{\mathbf u}_t^{h,i+1}-\mathbf u_t^{h,i+1}
\right)  \\
&=
\int_{\Omega}\frac{1}{\sqrt{\kappa_i^H}} \left( \frac{\kappa_i^H}{\kappa_i} -1\right) \mathbf{u}_t^{h,i+1}\cdot \frac{1}{\sqrt{\kappa_i^H}} \left(\tilde{\mathbf u}_t^{h,i+1}-\mathbf u_t^{h,i+1}\right)
\end{aligned}
\]
Hence, by the Cauchy--Schwarz inequality
\begin{align*}
\|\tilde{\mathbf u}_t^{h,i+1}-\mathbf u_t^{h,i+1}\|_{a_i^H}^2
&\le \|{\mathbf{u}_t^h}\|_{L^\infty(0,T ; \Omega)} 
\|\frac{\lambda_t(S_w^{H,i})-\lambda_t(S_w^{h,i})}{\lambda_t(S_w^{h,i})}\|_{a_i^H}
\|\tilde{\mathbf u}_t^{h,i+1}-\mathbf u_t^{h,i+1}\|_{a_i^H}
\end{align*}
Since the mobility function $\lambda_t(S_w)$ is bounded and has Lipschitz condition, we have
\[
\begin{aligned}
\|\tilde{\mathbf u}_t^{h,i+1}-\mathbf u_t^{h,i+1}\|_{a_i^H}^2
&\le
C_\lambda
\|{\mathbf{u}_t^h}\|_{L^\infty(0,T ; \Omega)} 
\|e_S^i\|_{a_i^H}
\|\tilde{\mathbf u}_t^{h,i+1}-\mathbf u_t^{h,i+1}\|_{a_i^H}.
\end{aligned}
\]
Therefore,
\[
\|\tilde{\mathbf u}_t^{h,i+1}-\mathbf u_t^{h,i+1}\|_{a_i^H}
\le
C_\lambda
\|{\mathbf{u}_t^h}\|_{L^\infty(0,T ; \Omega)} 
\|e_S^i\|_{a_i^H} .
\tag{4.31}
\]

Combining (4.29)-(4.31), we finally obtain
\[
\|\mathbf u_t^{h,i+1}-\mathbf u_t^{H,i+1}\|_{a_i^H}
\le
CH
\left(
\frac{1}{\sqrt{\Lambda}}
+\|F_t\|_{L^2(\Omega)}
\right)
+
C_\lambda
\|{\mathbf{u}_t^h}\|_{L^\infty(0,T ; \Omega)} 
\|e_S^i\|_{a_i^H} .
\]

The proof is complete.

\end{proof}

 Remark that to obtain the $O(H)$ convergence in $\|\mathbf{u}_t^{h,i+1} - \mathbf{u}_t^{H,i+1} \|_{a_i^H}$, we need to choose the size of the oversampling domain $l=O(log(\mathbf{K} /{H^2}))$. For a detailed derivation of these contents, refer to \cite{ChungEfendievLeung2018MixedCEM}.\\

\begin{theorem}

\textit{We recall that $\mathbf{u}_t^{h,n+1}$ and $\mathbf{u}_t^{H,n+1}$ are solutions to \eqref{4.6}-\eqref{4.7} and \eqref{4.8}-\eqref{4.9} at time $t_{n}$. For a given tolerance \(\varepsilon\), let \(i\) be the most recent time level at which the multiscale space is updated. Then we have
the following estimate:}
\begin{equation*}
\begin{split}
  \|e_{\mathbf{u}_t}^{n+1}\|_{a^H_n(\Omega)}
    &\le
    C_0 
    \|\mathbf{u}_t^h\|_{L^\infty(0,T;\Omega)}
    \varepsilon 
    + C_H 
    H\left(
\frac{1}{\sqrt{\Lambda}} 
+\|F_t\|_{L^2(\Omega)}
\right)
 \\  & + C_{\lambda}' \frac{1}{\sqrt{\kappa_{min}}}
    \|\mathbf{u}_t^h\|_{L^\infty(0,T;\Omega)}
    \left(\|e_S^n\|_{L^2(\Omega)}+ \|e_S^i\|_{L^2(\Omega)} \right)
\end{split}
\end{equation*}
Where the coefficients $C_0$, $C_H$ and $C_{\lambda}'$ are independent of the mesh size $h$, $H$ and contrast $\kappa$.
\end{theorem}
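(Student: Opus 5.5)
The plan is to split the error at time $t_{n+1}$ by a triangle inequality through the most recent update level $i$:
\[
e_{\mathbf u_t}^{n+1}
=(\mathbf u_t^{h,n+1}-\mathbf u_t^{h,i+1})
+(\mathbf u_t^{h,i+1}-\mathbf u_t^{H,i+1})
+(\mathbf u_t^{H,i+1}-\mathbf u_t^{H,n+1}),
\]
and to bound each piece in $\|\cdot\|_{a_n^H(\Omega)}$.

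\textbf{First piece.} Lemma~\ref{lemma 4.4} bounds this piece in the $a_n$ norm by $2\|\mathbf u_t^h\|_{L^\infty}\|\kappa_i^{-1/2}-\kappa_n^{-1/2}\|_{L^2}$. To pass from $a_n$ to $a_n^H$, I will write $(\kappa_n^H)^{-1/2}=\kappa_n^{-1/2}+((\kappa_n^H)^{-1/2}-\kappa_n^{-1/2})$. The difference $(\kappa_n^H)^{-1/2}-\kappa_n^{-1/2}$ is controlled pointwise by $C_\lambda\kappa_{\min}^{-1/2}|e_S^n|$, using the Lipschitz continuity of $\lambda_t$ and its bound away from zero. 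Next, I will replace the fine coefficients by the multiscale ones:
\[
\|\kappa_i^{-1/2}-\kappa_n^{-1/2}\|_{L^2}
\le \eta_n+C\kappa_{\min}^{-1/2}\bigl(\|e_S^i\|_{L^2}+\|e_S^n\|_{L^2}\bigr).
\]
Since $n$ is not an update level, $\eta_n\le\varepsilon$. This gives the $C_0\|\mathbf u_t^h\|_{L^\infty}\varepsilon$ term together with part of the saturation-error term.

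\textbf{Second piece.} This piece is at the update level. Lemma~\ref{lemma 4.5} gives the bound in $a_i^H$:
\[
CH\Bigl(\Lambda^{-1/2}+\|F_t\|_{L^2}\Bigr)+C_\lambda\|\mathbf u_t^h\|_{L^\infty}\|e_S^i\|_{a_i^H}.
\]
Here $\|e_S^i\|_{a_i^H}\le\kappa_{\min}^{-1/2}\|e_S^i\|_{L^2}$. To switch the norm from $a_i^H$ to $a_n^H$, I will use that $(\kappa_n^H)^{-1}/(\kappa_i^H)^{-1}=\lambda_t(S_w^{H,i})/\lambda_t(S_w^{H,n})$, which is uniformly bounded because the mobility is bounded above and below. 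This costs only a contrast-independent constant, which is absorbed into $C_H$ and $C_\lambda'$.

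\textbf{Third piece.} This is the main obstacle: comparing two multiscale solutions computed in the same space $V_{ms}^i$ but with coefficients $\kappa_i^H$ and $\kappa_n^H$, and with possibly different postprocessed enrichments $\widetilde V_{ms}^{i,i+1}\subset\widetilde V_{ms}^{i,n+1}$. I would mimic the proof of Lemma~\ref{lemma 4.4} at the discrete level. Both $\mathbf u_t^{H,i+1}$ and $\mathbf u_t^{H,n+1}$ lie in $\widetilde V_{ms}^{i,n+1}$ and satisfy the same divergence constraint \eqref{4.9}. Hence testing \eqref{4.8} at the two levels with the other solution gives the analogues of (4.23)--(4.24), and therefore
\[
\|\mathbf u_t^{H,n+1}-\mathbf u_t^{H,i+1}\|_{a_n^H}
\le 2\|\mathbf u_t^H\|_{L^\infty}\,\eta_n
\le 2\|\mathbf u_t^H\|_{L^\infty}\,\varepsilon.
\]
One caveat: the testing argument needs \eqref{4.8} at level $i$ to hold on the larger enriched space, which needs care and, if necessary, an additional assumption. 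The factor $\|\mathbf u_t^H\|_{L^\infty}$ is then replaced by $\|\mathbf u_t^h\|_{L^\infty}$ up to a constant, assuming uniform boundedness of the multiscale velocity; otherwise the $L^\infty$ error is absorbed into the saturation terms.

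\textbf{Conclusion.} Summing the three bounds and collecting constants yields the stated estimate. The delicate points are the $a_i^H\to a_n^H$ norm change and the discrete energy identity in the nested enriched spaces.
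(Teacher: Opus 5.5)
The gap is in your third piece. Your first two pieces are fine and match the paper's Steps 2--3: Lemma~\ref{lemma 4.4} plus the $\eta_n$ triangle inequality, and Lemma~\ref{lemma 4.5} with the mobility-ratio switch from $a_i^H$ to $a_n^H$. Your additive treatment of the $a_n\to a_n^H$ switch also works.

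The trouble is the discrete analogue of Lemma~\ref{lemma 4.4}. It needs orthogonality relations at both levels. The one at level $i$, $a_i^H(\mathbf u_t^{H,i+1},\mathbf u_t^{H,n+1}-\mathbf u_t^{H,i+1})=0$, follows from \eqref{4.8} only if $\mathbf u_t^{H,n+1}\in\widetilde V_{\mathrm{ms}}^{i,i+1}$. Because the cumulative marked set $\mathcal A_{H,\mathrm{pt}}^{i,n+1}$ grows with $n$, this fails in general. You noticed this but left it open, so the argument does not close. Even granting it, your bound carries $\|\mathbf u_t^H\|_{L^\infty}$. Nothing in the setting bounds the multiscale velocity in $L^\infty$ by $\|\mathbf u_t^h\|_{L^\infty}$ with an $h$-, $H$-, contrast-independent constant. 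The suggestion that the $L^\infty$ velocity error is ``absorbed into the saturation terms'' has no mechanism behind it.

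The missing idea is to use only the level-$n$ Galerkin relation, with $\mathbf u_t^{H,i+1}$ as a competitor in the larger space. Set $\mathbf z=\mathbf u_t^{H,i+1}-\mathbf u_t^{H,n+1}$. Then $\mathbf z\in\widetilde V_{\mathrm{ms}}^{i,n+1}$ by nesting, and $b(\mathbf z,q)=0$ for all $q\in Q_h$ by \eqref{4.9}. Testing \eqref{4.6} and \eqref{4.8} at level $n$ with $\mathbf z$ gives
\[
a_n^H\bigl(\mathbf u_t^{h,n+1}-\mathbf u_t^{H,n+1},\mathbf z\bigr)=\int_\Omega\bigl((\kappa_n^H)^{-1}-\kappa_n^{-1}\bigr)\mathbf u_t^{h,n+1}\cdot\mathbf z .
\]
Now write $\mathbf u_t^{h,n+1}-\mathbf u_t^{H,n+1}=(\mathbf u_t^{h,n+1}-\mathbf u_t^{H,i+1})+\mathbf z$ and apply Cauchy--Schwarz. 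This yields $\|\mathbf z\|_{a_n^H}\le\|\mathbf u_t^{h,n+1}-\mathbf u_t^{H,i+1}\|_{a_n^H}+C_\lambda\kappa_{\min}^{-1/2}\|\mathbf u_t^h\|_{L^\infty}\|e_S^n\|_{L^2}$. Hence
\[
\|e_{\mathbf u_t}^{n+1}\|_{a_n^H}\le 2\|\mathbf u_t^{h,n+1}-\mathbf u_t^{H,i+1}\|_{a_n^H}+C_\lambda\kappa_{\min}^{-1/2}\|\mathbf u_t^h\|_{L^\infty}\|e_S^n\|_{L^2},
\]
so your third piece is controlled by your first two. This is the Céa-type step (4.33) in the paper. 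Only $\|\mathbf u_t^h\|_{L^\infty}$ appears, and no property of the level-$i$ problem on the enlarged space is needed. With this replacement, your first two pieces finish the proof as in the paper.
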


\begin{proof}
    We divide the proof into 3 steps:\\

\noindent\textbf{Step 1}: First, by the cumulative construction of the marked set,
\(
\widetilde V_{\mathrm{ms}}^{i,i+1}
\subseteq
\widetilde V_{\mathrm{ms}}^{i,n+1}
\subseteq V_h
\)
So we have
\(
\mathbf z
=
\mathbf u_t^{H,i+1}
-
\mathbf u_t^{H,n+1}
\in
\widetilde V_{\mathrm{ms}}^{i,n+1}.
\)

Subtracting the enriched multiscale equation \eqref{4.8} from the fine-grid
equation \eqref{4.6} at time $t_n$ and taking $\mathbf v=\mathbf z$, we obtain
\begin{align*}
a_n(\mathbf u_t^{h,n+1},\mathbf z)
-
a_n^H(\mathbf u_t^{H,n+1},\mathbf z)
&=
b(\mathbf z,p_w^{h,n+1}-p_w^{H,n+1})
=0.
\end{align*}\\
Adding $\int_{\Omega} \frac{1}{\kappa_n^H} \mathbf{u}_t^{h,n+1}\cdot \mathbf{z}$ to both sides, we can deduce:
\begin{align}
\int_{\Omega} \frac{1}{\kappa_n^H} (\mathbf{u}_t^{h,n+1} - \mathbf{u}_t^{H,n+1}) \cdot \mathbf{z} 
= \int_{\Omega} \left( \frac{1}{\kappa_n^H} - \frac{1}{\kappa_n} \right) \mathbf{u}_t^{h,n+1}\cdot \mathbf{z} \tag{4.32}
\end{align}

Next we will estimate the equality (4.32) above. For the right hand side we have the following estimate:
\begin{align*}
\int_{\Omega} \left( \frac{1}{\kappa_n^H} - \frac{1}{\kappa_n} \right) \mathbf{u}_t^{h,n+1}\cdot \mathbf{z}&=\int_{\Omega} \frac{1}{\sqrt{\kappa_n^H}} \left( 1 - \frac{\kappa_n^H}{\kappa_n} \right) \mathbf{u}_t^{h,n+1}\cdot \frac{1}{\sqrt{\kappa_n^H}}\mathbf{z} \\
&\le \| \mathbf{u}_t^h\|_{L^\infty(0,T ; \Omega)} \|\frac{\lambda_t(S_w^{h,n})-\lambda_t(S_w^{H,n})}{\lambda_t(S_w^{h,n})}\|_{a_n^H} \|\mathbf{z}\|_{a_n^H} \\
&\le C_{\lambda}\|\mathbf{u}_t^h\|_{L^\infty(0,T ; \Omega)}\|e_S^n\|_{a_n^H}\|\mathbf{z}\|_{a_n^H}
\end{align*}
For the left hand side, we split it into:
\begin{align*}
\int_{\Omega} (\kappa_n^H)^{-1} (\mathbf{u}_t^{h,n+1} - \mathbf{u}_t^{H,n+1}) \cdot \mathbf{z} 
= \int_{\Omega} (\kappa_n^H)^{-1} (\mathbf{u}_t^{h,n+1} - \mathbf{u}_t^{H,i+1}+\mathbf{z}) \cdot \mathbf{z} 
\end{align*}
Combined with (4.32) and the equation above, we can obtain:
\begin{align*}
\int_{\Omega} (\kappa_n^H)^{-1} \mathbf{z} \cdot \mathbf{z} 
= \int_{\Omega} (\kappa_n^H)^{-1} (  \mathbf{u}_t^{H,i+1}-\mathbf{u}_t^{h,n+1}) \cdot \mathbf{z}+ \int_{\Omega} \left( \frac{1}{\kappa_n^H} - \frac{1}{\kappa_n} \right) \mathbf{u}_t^{h,n+1}\cdot \mathbf{z}
\end{align*}
Apply the Cauchy-Schwarz inequality on the right hand side and divide $\|\mathbf{z}\|_{a_n^H}$ in both sides, we have:
\begin{align*}
\|\mathbf{z}\|_{a_n^H}\le\|\mathbf{u}_t^{h,n+1}-\mathbf{u}_t^{H,i+1}\|_{a_n^H}+ C_{\lambda}\|\mathbf{u}_t^h\|_{L^\infty(0,T ; \Omega)}\|e_S^n\|_{a_n^H}
\end{align*}
By the triangle inequality in $e_{\mathbf{u}_t^{n+1}}=\mathbf{u}_t^{h,n+1}-\mathbf{u}_t^{H,i+1}+\mathbf{z}$, we finally obtain the estimate:
\begin{align*}
    \|e_{\mathbf{u}_t^{n+1}}\|_{a_n^H} 
    &\le \|\mathbf{u}_t^{h,n+1}-\mathbf{u}_t^{H,i+1}\|_{a_n^H}+\|\mathbf{z}\|_{a_n^H} \\
    &\le 2\|\mathbf{u}_t^{h,n+1}-\mathbf{u}_t^{H,i+1}\|_{a_n^H}+ C_{\lambda}\|\mathbf{u}_t^h\|_{L^\infty(0,T ; \Omega)}\|e_S^n\|_{a_n^H} \tag{4.33}
\end{align*}
\textbf{Step 2}: Then we will estimate the norm $\|\mathbf{u}_t^{h,n+1}-\mathbf{u}_t^{H,i+1}\|_{a_n^H}$. First, apply the triangle inequality
\begin{align*}
    \|\mathbf{u}_t^{h,n+1}-\mathbf{u}_t^{H,i+1}\|_{a_n^H}\le
    \|\mathbf{u}_t^{h,n+1}-\mathbf{u}_t^{h,i+1}\|_{a_n^H}+\|\mathbf{u}_t^{h,i+1}-\mathbf{u}_t^{H,i+1}\|_{a_n^H}
\end{align*}
For the first term in the right-hand side, we have
\begin{align*}
    \int_{\Omega} \frac{1}{\kappa_n^H}|\mathbf{u}_t^{h,n+1}-\mathbf{u}_t^{h,i+1}|^2=\int_{\Omega} \frac{\kappa_n}{\kappa_n^H}\cdot\frac{1}{\kappa_n}|\mathbf{u}_t^{h,n+1}-\mathbf{u}_t^{h,i+1}|^2\ &=\int_{\Omega} \frac{\lambda_t(S_w^{h,n})}{\lambda_t(S_w^{H,n})}\cdot\frac{1}{\kappa_n}|\mathbf{u}_t^{h,n+1}-\mathbf{u}_t^{h,i+1}|^2\\
   &\le \|\frac{\lambda_t(S_w^{h,n})}{\lambda_t(S_w^{H,n})}\|_{\infty} \|\mathbf{u}_t^{h,n+1}-\mathbf{u}_t^{h,i+1}\|_{a_n}^2
\end{align*}
Similarly for the second term $\|\mathbf{u}_t^{h,i+1}-\mathbf{u}_t^{H,i+1}\|_{a_n^H} \le \|\frac{\lambda_t(S_w^{H,i})}{\lambda_t(S_w^{H,n})}\|_{\infty} \|\mathbf{u}_t^{h,i+1}-\mathbf{u}_t^{H,i+1}\|_{a_i^H} $. Due to our numerical scheme P-IMPES, the saturation is bounded, which gives the these two coefficients are bounded by $C_M=1+2C_{\lambda}$ .Then combining it with lemma \ref{lemma 4.4}, lemma \ref{lemma 4.5}, we have
\begin{align*}
  \|\mathbf{u}_t^{h,n+1}-\mathbf{u}_t^{H,i+1}\|_{a_n^H}
  &\le C_{M} \|\mathbf{u}_t^{h,n+1}-\mathbf{u}_t^{h,i+1}\|_{a_n}+C_{M}\|\mathbf{u}_t^{h,i+1}-\mathbf{u}_t^{H,i+1}\|_{a_i^H}         \\
  & \le 2 C_{M}\|{\mathbf{u}_t^h}\|_{L^\infty(0,T ; \Omega)}  \|\kappa_i^{-\frac{1}{2}}-\kappa_n^{-\frac{1}{2}}\|_{L^2(\Omega)} \\
  &+CC_{M}H\left(
\frac{1}{\sqrt{\Lambda}} 
+\|F_t\|_{L^2(\Omega)}
\right) 
+
C_{\lambda} C_{M}
\|{\mathbf{u}_t^h}\|_{L^\infty(0,T ; \Omega)} 
\|e_S^i\|_{a_i^H} . \tag{4.34}
\end{align*}
\textbf{Step 3}: Combining conclusions (4.33) and (4.34) above, we obtain the estimate:
\begin{align*}
\|e_{\mathbf{u}_t}^{n+1}\|_{a_n^H(\Omega)} \le 4C_M\|{\mathbf{u}_t^h}\|_{L^\infty(0,T ; \Omega)}  \|\kappa_i^{-\frac{1}{2}}-\kappa_n^{-\frac{1}{2}}\|_{L^2(\Omega)}
    &+2CC_MH\left(
\frac{1}{\sqrt{\Lambda}} 
+\|F_t\|_{L^2(\Omega)}
\right)\\+C_{\lambda} \frac{1}{\sqrt{\kappa_{min}}}\|\mathbf{u}_t^h\|_{L^\infty(0,T ; \Omega)}\|e_S^n\|_{L^2(\Omega)} 
&+2C_{\lambda}C_M\frac{1}{\sqrt{\kappa_{min}}}\|\mathbf{u}_t^h\|_{L^\infty(0,T ; \Omega)}\|e_S^i\|_{L^2(\Omega)} 
\end{align*}
 
By the triangle inequality and the Lipschitz continuity of
\(\lambda_t(x)^{-\frac{1}{2}}\), we obtain
\begin{align*}
    \|\kappa_i^{-\frac{1}{2}}-\kappa_n^{-\frac{1}{2}}\|_{L^2(\Omega)} &\le \|\kappa_i^{-\frac{1}{2}}-(\kappa_i^H)^{-\frac{1}{2}}\|_{L^2(\Omega)}+\|(\kappa_i^H)^{-\frac{1}{2}}-(\kappa_n^H)^{-\frac{1}{2}}\|_{L^2(\Omega)}+\|(\kappa_n^H)^{-\frac{1}{2}}-\kappa_n^{-\frac{1}{2}}\|_{L^2(\Omega)} \\
&\le  \|(\kappa_i^H)^{-\frac{1}{2}}-(\kappa_n^H)^{-\frac{1}{2}}\|_{L^2(\Omega)}+C_\lambda\kappa_{min}^{-\frac{1}{2}} \left(\|e_S^n\|_{L^2(\Omega)}+ \|e_S^i\|_{L^2(\Omega)} \right)
\end{align*}

Combining these two inequality, we obtain:
\begin{align*}
\|e_{\mathbf{u}_t}^{n+1}\|_{a_n^H(\Omega)} \le &C_0\|{\mathbf{u}_t^h}\|_{L^\infty(0,T ; \Omega)}\|(\kappa_i^H)^{-\frac{1}{2}}-(\kappa_n^H)^{-\frac{1}{2}}\|_{L^2(\Omega)}
    +C_HH\left(
\frac{1}{\sqrt{\Lambda}} 
+\|F_t\|_{L^2(\Omega)}
\right)\\ &+C_{\lambda}' \frac{1}{\sqrt{\kappa_{min}}}
    \|\mathbf{u}_t^h\|_{L^\infty(0,T;\Omega)}
    \left(\|e_S^n\|_{L^2(\Omega)}+ \|e_S^i\|_{L^2(\Omega)} \right) .
\end{align*}
where $C_0=4C_M$, $C_H=2CC_M$ and $C_{\lambda}'=6C_{\lambda}C_M$ are the coefficients that are independent of the contrast and mesh size.

By our adaptive update method, before solving the system at time level
\(t_n\), we have
\[
   \eta_n= \|(\kappa_i^H)^{-\frac{1}{2}}-(\kappa_n^H)^{-\frac{1}{2}}\|_{L^2(\Omega)}
    \le \varepsilon.
\]
Otherwise, the multiscale space is regenerated and the update index is reset
to \(i=n\). Hence,
\[
\begin{split}
    \|e_{\mathbf{u}_t}^{n+1}\|_{a_n^H(\Omega)}
    &\le
    C_0
    \|\mathbf{u}_t^h\|_{L^\infty(0,T;\Omega)}
    \varepsilon 
    + C_H 
    H\left(
\frac{1}{\sqrt{\Lambda}} 
+\|F_t\|_{L^2(\Omega)}
\right)
 \\  & + C_{\lambda}' \frac{1}{\sqrt{\kappa_{min}}}
    \|\mathbf{u}_t^h\|_{L^\infty(0,T;\Omega)}
    \left(\|e_S^n\|_{L^2(\Omega)}+ \|e_S^i\|_{L^2(\Omega)} \right) .
\end{split}
\]
\end{proof}

\begin{theorem}
Recall that \(S_w^{h,n+1}\) and \(S_w^{H,n+1}\) are the fine-scale and
multiscale saturation solutions to \eqref{2.5} and \eqref{3.4}.
Then
\begin{align*}
\|e_S^{n+1}\|_{L^2(\Omega)}
\le
\frac{e^{C_1't_{n+1}}-1}{C_1'}
\left[C_0'
U\varepsilon
+C_H'
H
\left(
\frac{1}{\sqrt{\Lambda}}
+
\|F_t\|_{L^2(\Omega)}
\right)
+
C\delta_h^{\frac12}
\right]
\end{align*}
where \(U=\|\mathbf u_t^h\|_{L^\infty(0,T;L^\infty(\Omega))}\), 
\(C_1'=C\phi^{-1}
\bigl(
L_f\|F_t\|_{L^\infty(\Omega)}
+\kappa_{\min}^{-\frac12}L_f\zeta_h h^{-1}U
\bigr)
\allowbreak
+2C\phi^{-1}L_f\zeta_h h^{-1}C_\lambda'
\kappa_{\min}^{-\frac12}U\). All these constant may depend on the porosity, the relative permeability functions, the shape regularity of the mesh, but are independent of \(h\), \(H\), \(\Delta t\) and the contrast $\kappa$. 

\end{theorem}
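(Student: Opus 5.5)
The plan is to combine the smooth-region recursion of Lemma~\ref{lemma 4.3}(iii) with the velocity bound of the preceding theorem, and to add the front-layer contribution of Lemma~\ref{lemma 4.3}(i). The result is a one-step linear recursion for $E^n:=\|e_S^n\|_{L^2(\Omega)}$, which we then close with a discrete Gronwall argument. Since $S_w^{h,0}=S_w^{H,0}$, we have $E^0=0$.

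\textbf{Step 1: splitting the error.} We split the error as
\[
E^{n+1}\le E_G^{n+1}+\|e_S^{n+1}\|_{L^2(\Omega_{\mathcal B}^{n+1})}\le E_G^{n+1}+\delta_h^{1/2},
\]
where the second inequality is (4.13).

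\textbf{Step 2: the smooth region.} We apply (4.15) with $u=u_t^{h,n+1}$, using $\|u\|_{L^\infty}\le U$. This gives
\[
E_G^{n+1}\le(1+C_{\rm st}\Delta t)E_G^n+C_{\rm v}\Delta t\,\|e_{\mathbf u_t}^{n+1}\|_{a_n^H}.
\]
Next we insert the velocity estimate of the previous theorem:
\[
\|e_{\mathbf u_t}^{n+1}\|_{a_n^H}\le C_0U\varepsilon+C_HH\bigl(\Lambda^{-1/2}+\|F_t\|_{L^2}\bigr)+C_\lambda'\kappa_{\min}^{-1/2}U\,(E^n+E^i),
\]
with $i\le n$. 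To make the recursion close, we use $E_G^n\le E^n$ and define the running maximum $M^n:=\max_{k\le n}E^k$, so that $E^n+E^i\le 2M^n$. The amplification factor then becomes
\[
1+\Delta t\Bigl(C_{\rm st}+2C_{\rm v}C_\lambda'\kappa_{\min}^{-1/2}U\Bigr)=1+C_1'\Delta t.
\]
With $C_{\rm st}$ evaluated at $\|u\|_\infty\le U$ and $C_{\rm v}=C\phi^{-1}L_f\zeta_hh^{-1}$, this matches exactly the stated $C_1'$. The forcing term is $\Delta t\,C_{\rm v}\bigl[C_0U\varepsilon+C_HH(\cdots)\bigr]$. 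Since $\zeta_h/h=O(1)$, the factor $C_{\rm v}$ is bounded, and it can be absorbed to give $C_0'$ and $C_H'$.

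\textbf{Step 3: the front-layer term.} The term $\delta_h^{1/2}$ enters additively at each step. To obtain it in the form $\Delta t\cdot C\delta_h^{1/2}$ inside the bracket, I would not add it after the recursion. Instead, I would carry it inside the recursion, with the front-layer cells treated as a source $\Delta t$-scaled per step. Concretely, $M^{n+1}\le(1+C_1'\Delta t)M^n+\Delta t\,G$, with $G$ the bracketed quantity. This is justified by noting that the closure property sends front contributions through the upwind update with $O(\Delta t/h)$ weights, so I would absorb $h^{-1}$ via the CFL condition $\Delta t/h\lesssim 1$, up to constants.

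\textbf{Step 4: Gronwall.} Unrolling from $M^0=0$ gives
\[
M^{n+1}\le\Delta t\,G\sum_{k=0}^{n}(1+C_1'\Delta t)^k\le G\,\frac{(1+C_1'\Delta t)^{n+1}-1}{C_1'}\le G\,\frac{e^{C_1't_{n+1}}-1}{C_1'}.
\]

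\textbf{Main obstacle.} The main obstacle is Step 3, namely getting $\delta_h^{1/2}$ to appear with a $\Delta t$ factor rather than as an $O(1)$ per-step addition, which would blow up as $N=T/\Delta t$. If the CFL-based argument fails, the fallback is to read $\delta_h^{1/2}$ in the bracket as absorbing $\Delta t^{-1}$. That is, apply Step 1 only at the final level and run the Gronwall argument on $E_G$, bounding $E^n$ in the velocity term by $E_G^n+\delta_h^{1/2}$. In that version $\delta_h^{1/2}$ enters the forcing as $\Delta t\,C_{\rm v}C_\lambda'\kappa_{\min}^{-1/2}U\delta_h^{1/2}$. This is the cleaner route and the one I would try first. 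It leaves a final additive $\delta_h^{1/2}$, which must be merged into the bracket via $\frac{e^{C_1't}-1}{C_1'}\ge t_{n+1}\ge\Delta t$, with constants adjusted.
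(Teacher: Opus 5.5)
Your fallback route is the paper's proof. The paper runs the recursion (4.15) on the smooth-region error $E_G$ only and inserts the velocity bound of the preceding theorem. It then replaces $\|e_S^m\|_{L^2(\Omega)}$ for $m=i,n$ by $E_G^m+\delta_h^{1/2}$ (its (4.36)), so the front layer enters only as a forcing $C\Delta t\,\delta_h^{1/2}$ with $C\sim C_{\rm v}C_\lambda'\kappa_{\min}^{-1/2}U$. It takes the running maximum $M_G^n=\max_{m\le n}E_G^m$ to absorb the lagged term $E_G^i$ into the factor $1+C_1'\Delta t$, and unrolls from $M_G^0=0$.

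Your primary Step~3 does not work and should be discarded, not kept as the main line. The bound (4.13) is a per-level estimate from $|e_S|\le 1$, not an increment produced by one update, so there is nothing $\Delta t$-scaled to extract. The closure property guarantees that front cells never enter the upwind stencil of smooth cells. The front layer reaches $E_G$ only through the global norms $\|e_S^n\|_{L^2(\Omega)}$ and $\|e_S^i\|_{L^2(\Omega)}$ in the velocity bound, which already carry the factor $C_{\rm v}\Delta t$. That is the source of the $\Delta t$, not the CFL condition, which only bounds $\Delta t/h$ from above and cannot turn $\delta_h^{1/2}$ into $\Delta t\,\delta_h^{1/2}$. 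As you suspected, running the recursion on the full error $M^n=\max_{k\le n}E^k$ re-adds $\delta_h^{1/2}$ unscaled at every level. The front contribution is then of order $\frac{e^{C_1't_{n+1}}-1}{C_1'\Delta t}\,\delta_h^{1/2}$.

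There is one caveat on the final step, and the paper shares it. After the Gronwall bound for $E_G^{n+1}$, the split at level $n+1$ leaves an additive $\delta_h^{1/2}$ outside the factor $\frac{e^{C_1't_{n+1}}-1}{C_1'}$. The paper absorbs it into the bracket without comment. Your justification via $\frac{e^{C_1't_{n+1}}-1}{C_1'}\ge t_{n+1}$ turns the bracket constant into $C+t_{n+1}^{-1}$. Going further to $\ge\Delta t$ makes it $\Delta t$-dependent. So the claimed independence of $\Delta t$ is justified only for $t_{n+1}\ge t_*>0$, e.g.\ at a fixed final time. Otherwise the clean statement keeps $+\delta_h^{1/2}$ as a separate term outside the Gronwall factor.
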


\begin{proof}
By Lemma \ref{lemma 4.3}, we have
\[
E_G^{n+1}
\le
(1+C_{\rm st}\Delta t)E_G^n
+
C_{\rm v}\Delta t
\|e_{u_t}^{n+1}\|_{a^H_n(\Omega)}.
\]
Recall that $E_G^n=\|e_S^n\|_{2,\mathcal G_h^n}$. Using the velocity estimate in Theorem 4.2, we obtain
\[
E_G^{n+1}
\le
(1+C_{\rm st}\Delta t)E_G^n
+
C_{\rm v}\Delta t A
+
C_{\rm v}C_\lambda' \kappa_{min}^{-\frac{1}{2}} U\Delta t
\left(\|e_S^n\|_{L^2(\Omega)}+ \|e_S^i\|_{L^2(\Omega)} \right).
\tag{4.35}
\]
where $A
=
C_0
U\varepsilon
+
C_HH
\left(
\frac{1}{\sqrt{\Lambda}} 
+\|F_t\|_{L^2(\Omega)}
\right)$.

Splitting the saturation error into the smooth region and the front layer gives
\[
\|e_S^m\|_{L^2(\Omega)}
\le
\|e_S^m\|_{L^2(\Omega_{\mathcal G}^{m})}
+
\|e_S^m\|_{L^2(\Omega_{\mathcal B}^{m})}.
\]
By Lemma \ref{lemma 4.3},
\[
\|e_S^m\|_{L^2(\Omega)}
\le
E_G^m+\delta_h^{\frac{1}{2}},
\qquad m=i,n.
\tag{4.36}
\]
Substituting (4.36) into (4.35), we get
\[
E_G^{n+1}
\le
(1+C_1\Delta t)E_G^n
+
C_2\Delta t E_G^i
+
C_{\rm v}\Delta t A
+
C\Delta t\,\delta_h^{\frac{1}{2}},
\tag{4.37}
\]
where
\(
C_1=C_{\rm st}+C_{\rm v}C_\lambda' \kappa_{min}^{-\frac{1}{2}} U\), \(
C_2=C_{\rm v}C_\lambda' \kappa_{min}^{-\frac{1}{2}} U.
\) Then define
$M_G^n=\max\limits_{0\le m\le n}E_G^m$, which gives
\[
E_G^{n+1}
\le
\left(1+(C_1+C_2)\Delta t\right)M_G^n
+
\Delta t
\left(
C_{\rm v}A+C\delta_h^{\frac12}
\right).
\]
Moreover, the right-hand side is greater than \(M_G^n\).
Therefore,
\[
\begin{aligned}
M_G^{n+1}
=\max\left\{M_G^n,E_G^{n+1}\right\}\le
\left(1+(C_1+C_2)\Delta t\right)M_G^n
+
\Delta t
\left(
C_{\rm v}A+C\delta_h^{\frac12}
\right).
\end{aligned}
\]
Denoting \(
C_1'=C_1+C_2
=C_{\rm st}+2C_{\rm v}C_\lambda'
\kappa_{\min}^{-\frac12}U
\). Iterating the above inequality and using \(M_G^0=E_G^0=0\), we obtain
\[
\begin{aligned}
E_G^{n+1}
\le M_G^{n+1}
&\le
\frac{
\left(1+C_1'\Delta t\right)^{n+1}-1
}{
C_1'
}
\left(
C_{\rm v}A+C\delta_h^{\frac12}
\right)
\\
&\le
\frac{
e^{C_1't_{n+1}}-1
}{
C_1'
}
\left(
C_{\rm v}A+C\delta_h^{\frac12}
\right).
\end{aligned}
\tag{4.38}
\]
Applying the saturation-error decomposition (4.36) at time level
\(t_{n+1}\) and combining it with (4.38), we obtain
\[
\|e_S^{n+1}\|_{L^2(\Omega)}
\le
\frac{
e^{C_1't_{n+1}}-1
}{
C_1'
}
\left[
C_0'U\varepsilon
+
C_H'H
\left(
\frac{1}{\sqrt{\Lambda}}
+\|F_t\|_{L^2(\Omega)}
\right)
+
C\delta_h^{\frac12}
\right],
\]
where \(C_0'=C_{\rm v}C_0, C_H'=C_{\rm v}C_H\). These constants are independent of the mesh sizes \(h\) and \(H\), the time step \(\Delta t\), and the contrast \(\kappa\).

We remark that, under the smooth-region assumption \eqref{4.11}, the smoothness
modulus \(\zeta_h\) appearing in \(C_1'\) satisfies \(\zeta_h=O(h)\). Hence
\(\zeta_h /h=O(1)\), so the apparent factor \(h^{-1}\) in \(C_1'\) is uniformly bounded as \(h\to0\).

\end{proof}

\section{Numerical experiments}
\label{sec 5}

In this section, we present several numerical experiments to evaluate the performance of the proposed multiscale method. In particular, we focus on its mass conservation property, saturation approximation, and error convergence behavior in heterogeneous and high-contrast porous media. In the following experiments, the computational domain is $\Omega = [0, 1]^2$, the capillary pressure is modeled by the Hoteit--Firoozabadi formula $p_c(S_w)=-\frac{B_c}{\sqrt{\mathbf{K}}}\log \bar S_w$, where $B_c \geq 0$ is a capillary parameter, and $\bar S_w$ is the effective saturation defined as $\bar S_w=\frac{S_w-S_{rw}}{1-S_{rw}-S_{rn}}$, here we choose $S_{rw} = S_{rn} = 10^{-6}$. The porosity is fixed as $\phi=0.2$ throughout all numerical tests. Moreover, the relative permeabilities of the wetting and non-wetting phases are given by $k_{rw}=\bar S_w^2,\quad k_{rn}=(1-\bar S_w)^2$, the viscosity for two phases is set as $\mu_w=1$ and $\mu_n=5$. We use the following relative errors to evaluate the performance of the proposed method:
\[
    e_{u}
    =
    \frac{\|\mathbf u_h-\mathbf u_{\mathrm{ms}}\|_{\kappa^{-1}}}
         {\|\mathbf u_h\|_{\kappa^{-1}}},
    \quad
    e_{S}
    =
    \frac{\|S_h-S_{ms}\|_{L^2(\Omega)}}
         {\|S_h\|_{L^2(\Omega)}},
\]
where  $\|u\|_{\kappa^{-1}}^2 = \int_{\Omega} \mathbf{K}^{-1}|u|^2 $, $(\mathbf u_h,S_h)$ is the fine-scale solution as reference solution, and $(\mathbf u_{ms},S_{ms})$ is the multiscale solution from our scheme. \\

\textbf{Example 5.1. (Two phase conservation)}
In this example we will check the two phase conservation and unbiased property using the multiscale basis obtained in a local region of the high-contrast medium $\kappa_1$. As mentioned in the subsection \ref{4.1}, there are two ways
\eqref{4.1} and \eqref{4.2}-\eqref{4.3} to obtain the non-wetting phase saturation, denoted by $S_{n,1}$ and $S_{n,2}$ respectively. If $S_{n,1}=S_{n,2} $, the local mass conservation is attained for both two phases. We use a high contrast permeability field $\kappa_1$ to check the property. See Figure 2 for the illustration of the source term $F_t$ and the permeability field $\kappa_1$. We choose only $1$ basis in every coarse element, it is sufficient to conclude that multiscale solutions can well satisfy this conservation property since solutions computed with more multiscale bases will
obtain higher accuracy. We set the fine mesh size to $100 \times 100$, the tolerance $\varepsilon=0.1$ in adaptive algorithm and the capillary parameter $B_c=0$. The source/sink term function $F_t$ is set to be 
\[
F_t(\boldsymbol{x})=
\begin{cases}
  1,  & \boldsymbol{x}\in [0,0.01]^2,\\
 -1,  & \boldsymbol{x}\in [0.99,1]^2,\\
  0,  & \text{otherwise},
\end{cases}
\]

\begin{figure}[H]
    \centering

    \begin{subfigure}{0.47\textwidth}
        \centering
        \includegraphics[width=\textwidth]{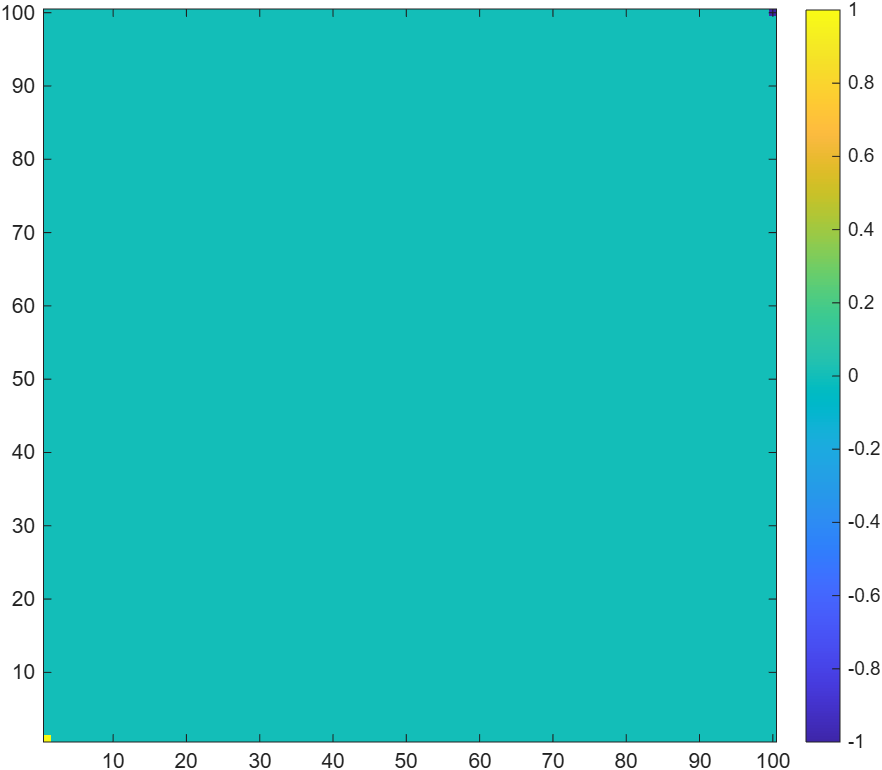}
        \caption{Source term function $F_t$.}
        \label{fig:F_t}
    \end{subfigure}
    \hfill
    \begin{subfigure}{0.47\textwidth}
        \centering
        \includegraphics[width=\textwidth]{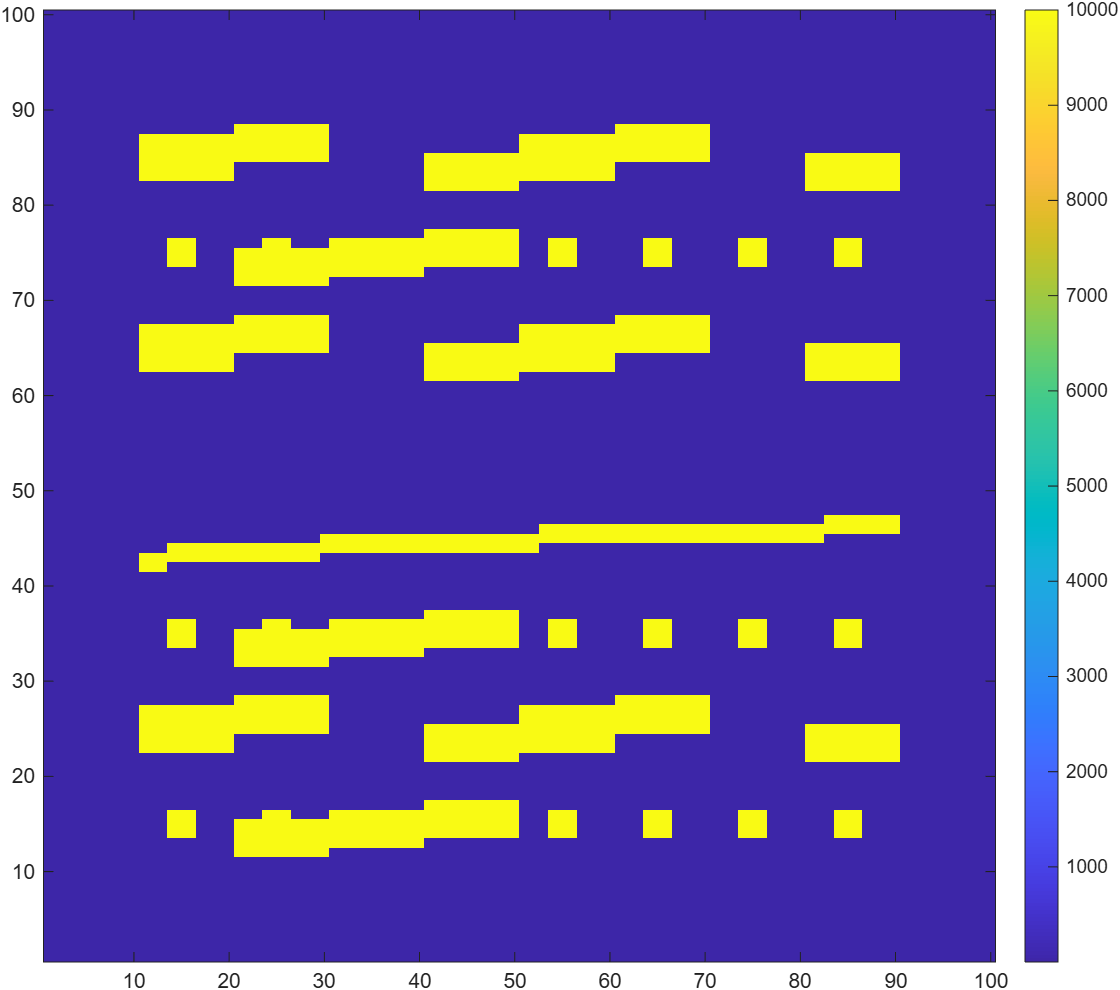}
        \caption{Permeability field $\kappa_1$.}
        \label{fig:kappa_1}
    \end{subfigure}

    \caption{Left: source term. Right: permeability field.}
    \label{fig:initial_kappa}
\end{figure}

\vspace{-5mm}

\setcounter{figthree}{0}

\refstepcounter{figthree}
\label{fig:Sn_T1}
\begin{figure}[H]
    \centering
    \setcounter{subfigure}{0}

    \begin{subfigure}{0.32\textwidth}
        \centering
        \includegraphics[width=\linewidth]{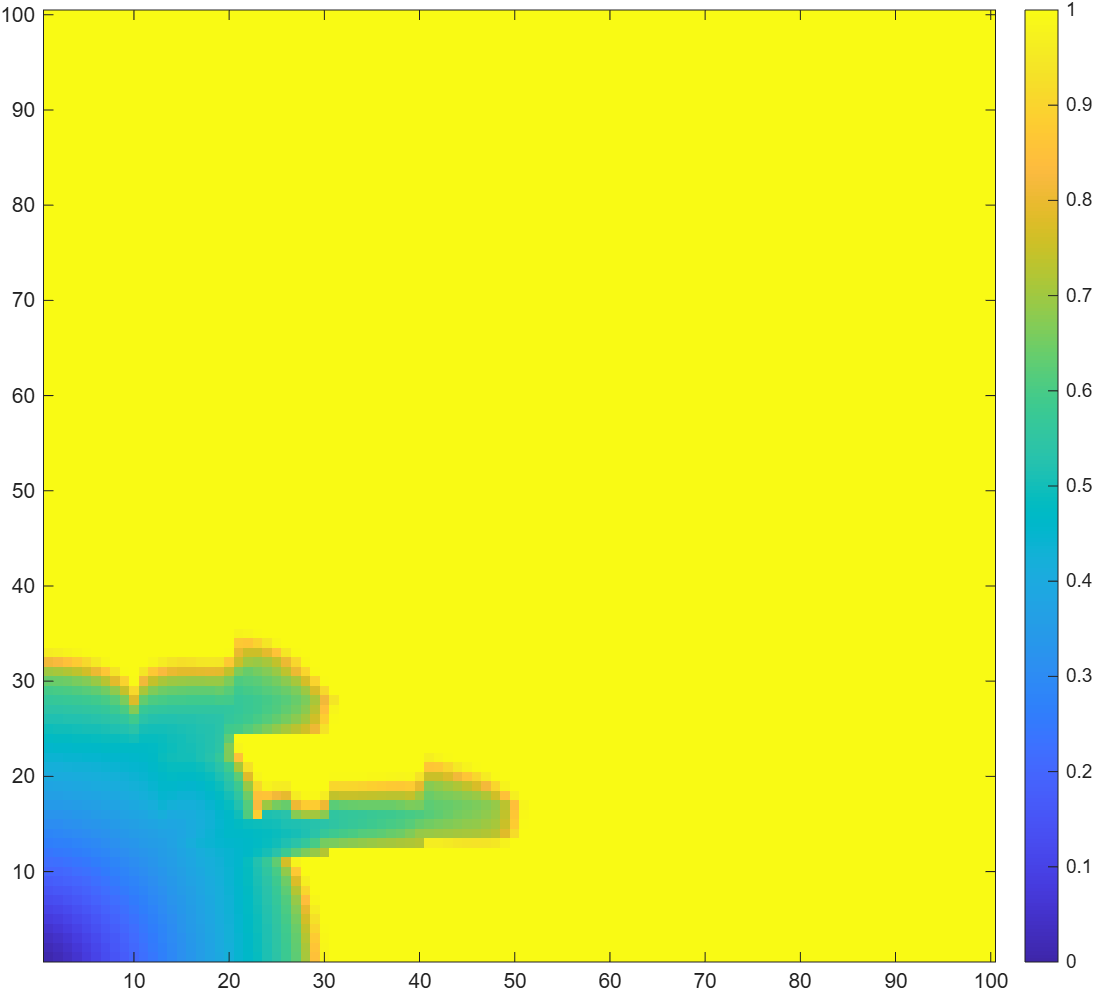}
        \caption{$S_{n,1}$ at $T=1$.}
        \label{fig:Sn1_T1}
    \end{subfigure}
    \hfill
    \begin{subfigure}{0.32\textwidth}
        \centering
        \includegraphics[width=\linewidth]{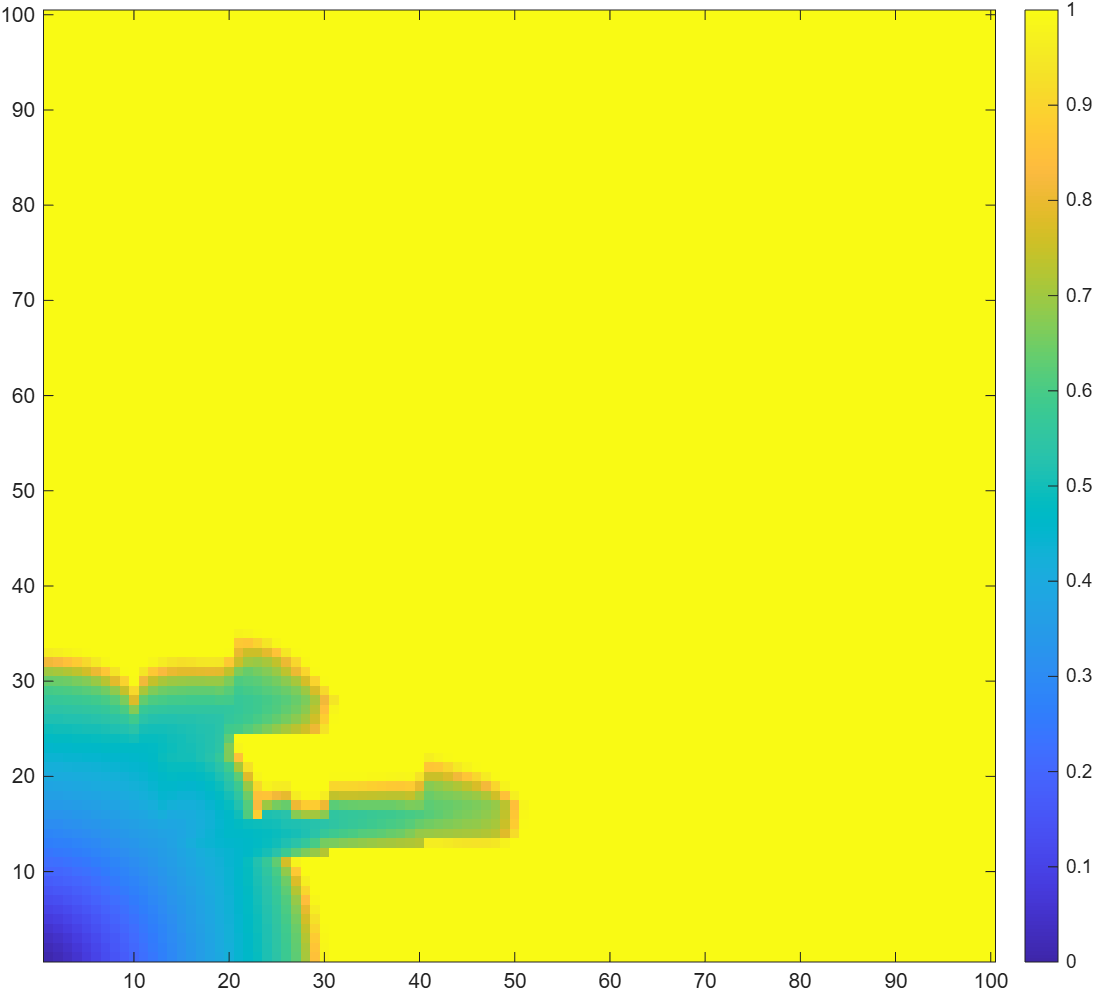}
        \caption{$S_{n,2}$ at $T=1$.}
        \label{fig:Sn2_T1}
    \end{subfigure}
    \hfill
    \begin{subfigure}{0.32\textwidth}
        \centering
        \includegraphics[width=\linewidth]{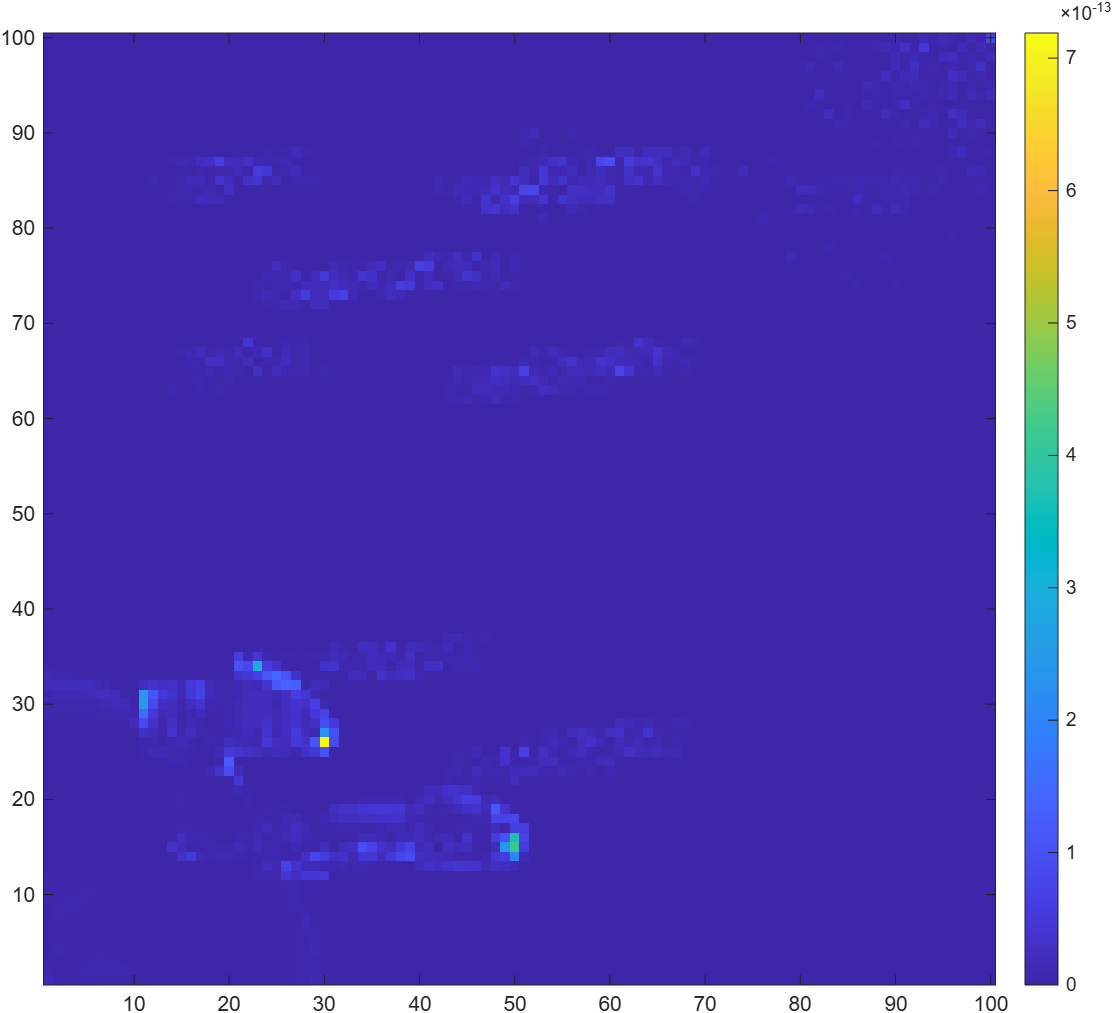}
        \caption{$|S_{n,1}-S_{n,2}|$ at $T=1$.}
        \label{fig:Sn_diff_T1}
    \end{subfigure}

    \caption*{Figure~\thefigthree. Two ways for non-wetting saturation $S_n$ at time $T=1$.}
\end{figure}

\refstepcounter{figthree}
\label{fig:Sn_T2}
\begin{figure}[H]
    \centering
    \setcounter{subfigure}{0}

    \begin{subfigure}{0.32\textwidth}
        \centering
        \includegraphics[width=\linewidth]{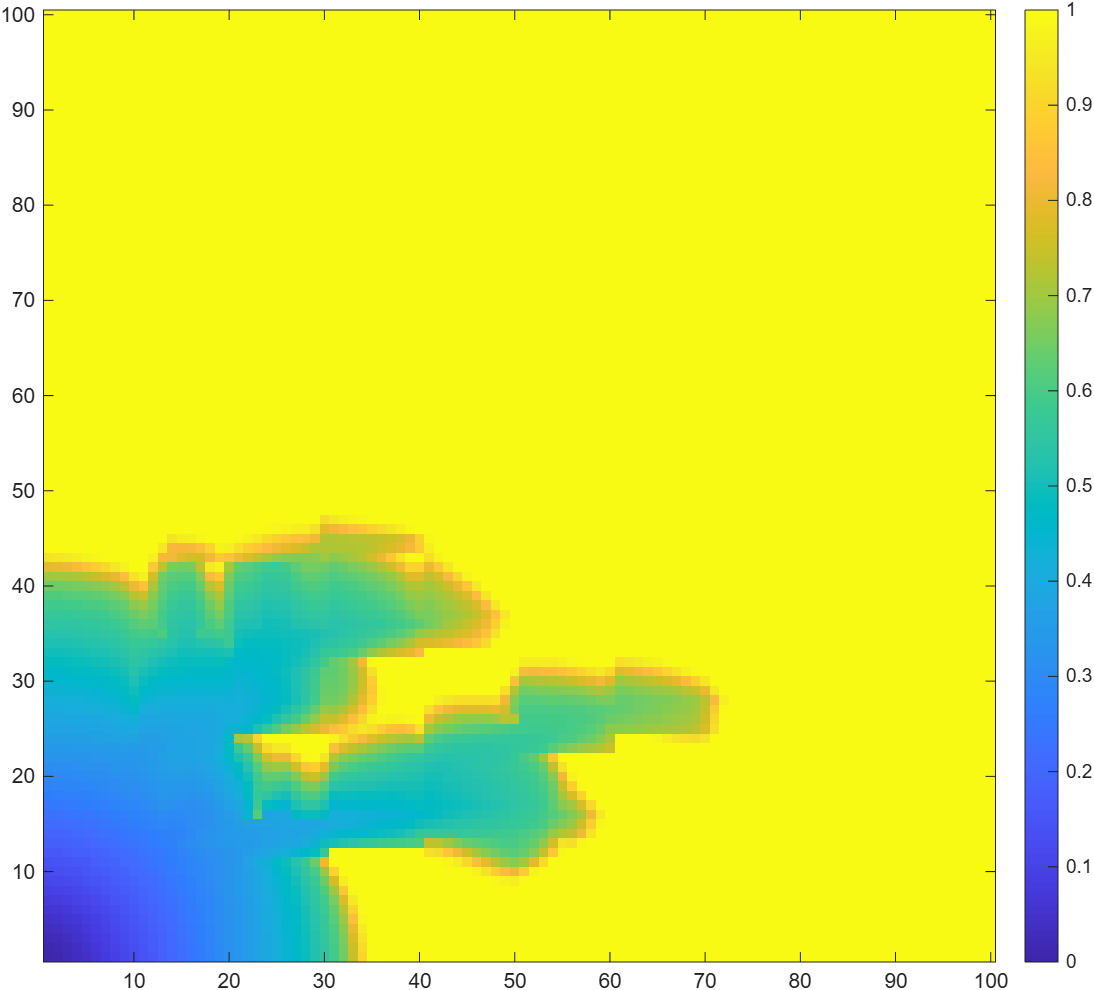}
        \caption{$S_{n,1}$ at $T=2$.}
        \label{fig:Sn1_T2}
    \end{subfigure}
    \hfill
    \begin{subfigure}{0.32\textwidth}
        \centering
        \includegraphics[width=\linewidth]{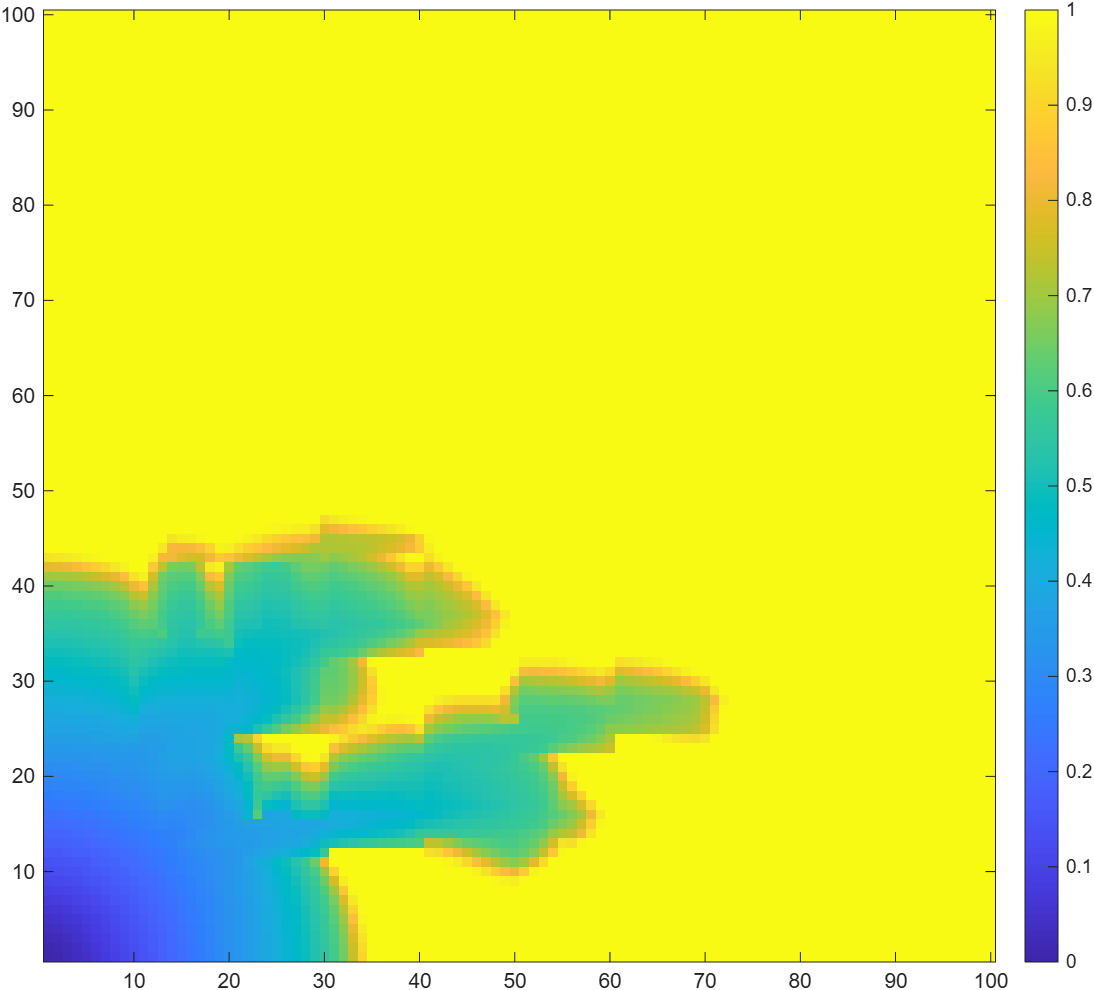}
        \caption{$S_{n,2}$ at $T=2$.}
        \label{fig:Sn2_T2}
    \end{subfigure}
    \hfill
    \begin{subfigure}{0.32\textwidth}
        \centering
        \includegraphics[width=\linewidth]{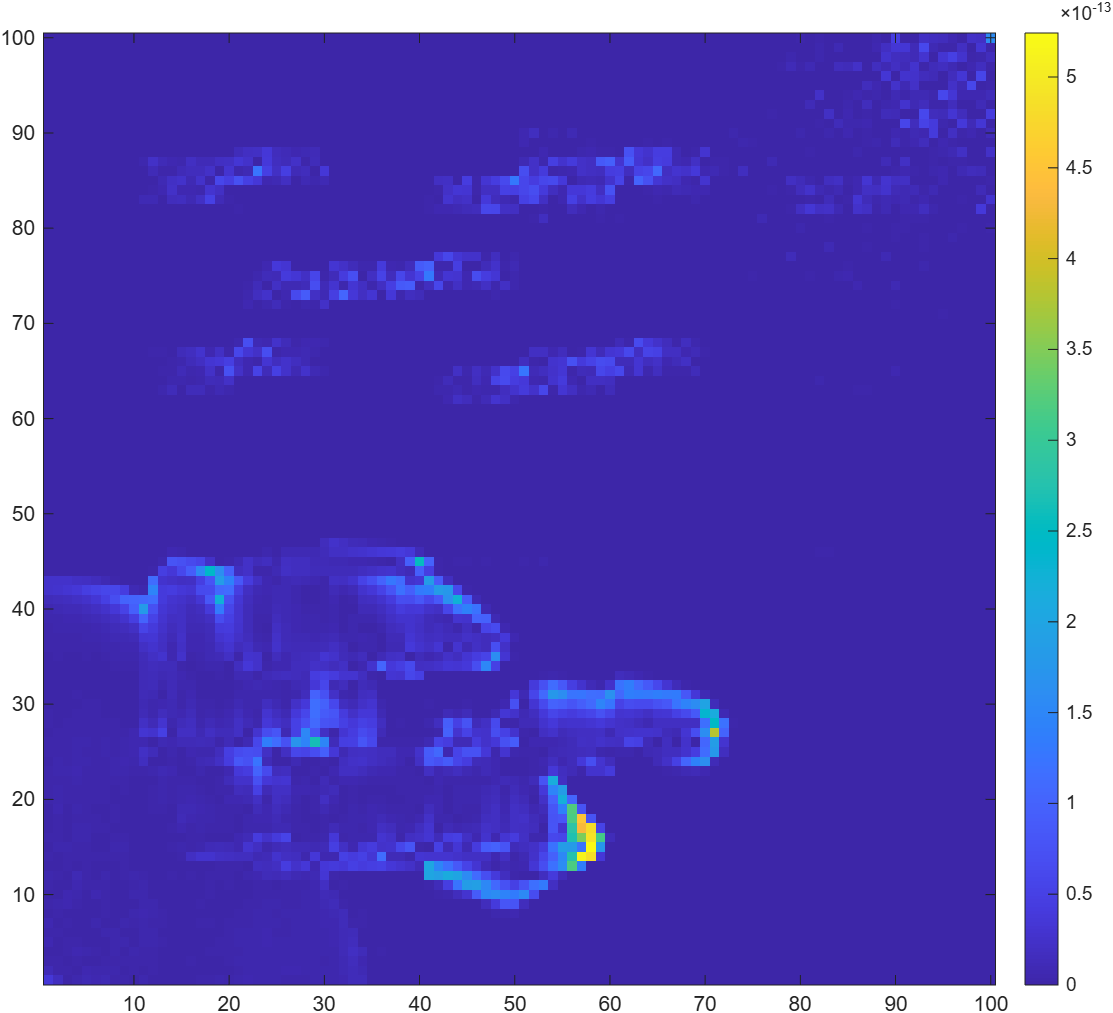}
        \caption{$|S_{n,1}-S_{n,2}|$ at $T=2$.}
        \label{fig:Sn_diff_T2}
    \end{subfigure}

    \caption*{Figure~\thefigthree. Two ways for non-wetting saturation $S_n$ at time $T=2$.}
\end{figure}

\refstepcounter{figthree}
\label{fig:Sn_T3}
\begin{figure}[H]
    \centering
    \setcounter{subfigure}{0}

    \begin{subfigure}{0.32\textwidth}
        \centering
        \includegraphics[width=\linewidth]{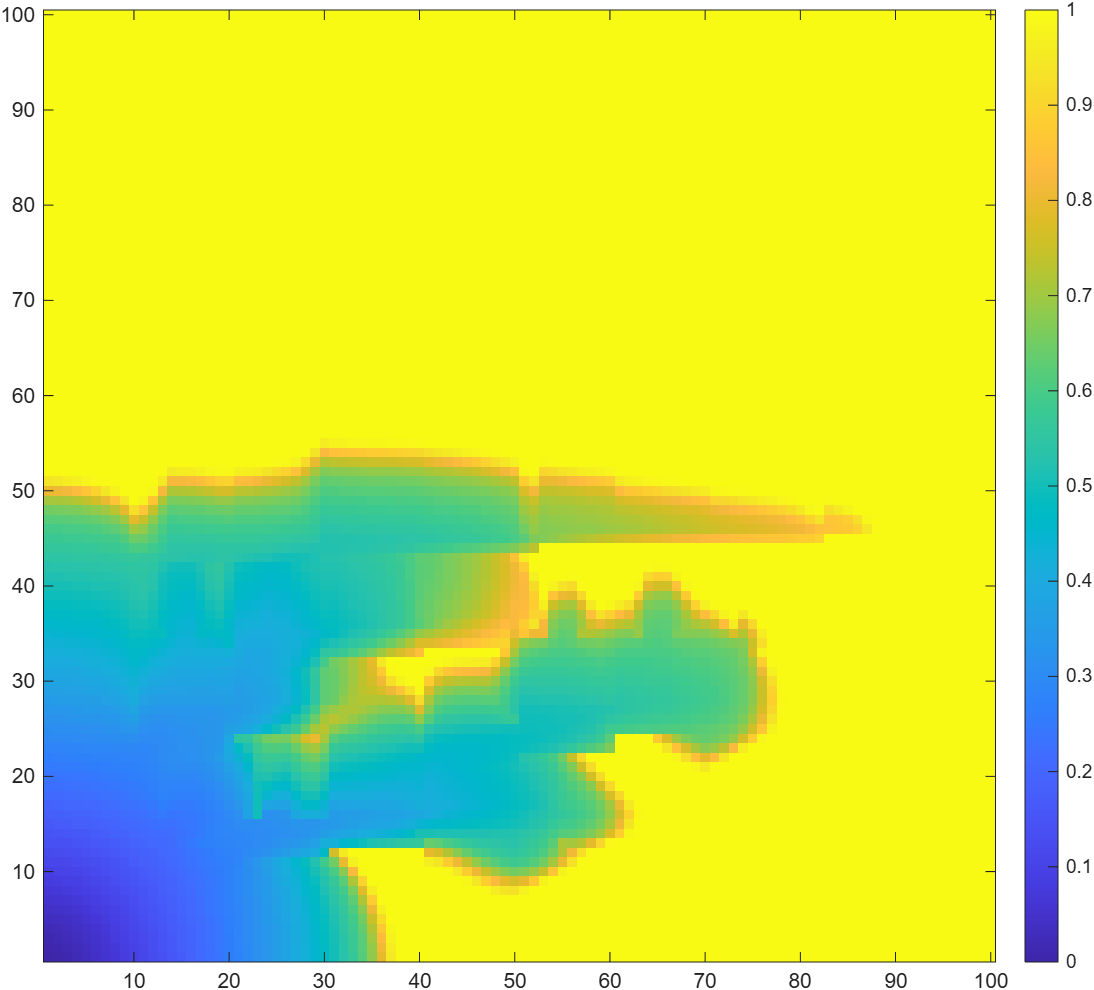}
        \caption{$S_{n,1}$ at $T=3$.}
        \label{fig:Sn1_T3}
    \end{subfigure}
    \hfill
    \begin{subfigure}{0.32\textwidth}
        \centering
        \includegraphics[width=\linewidth]{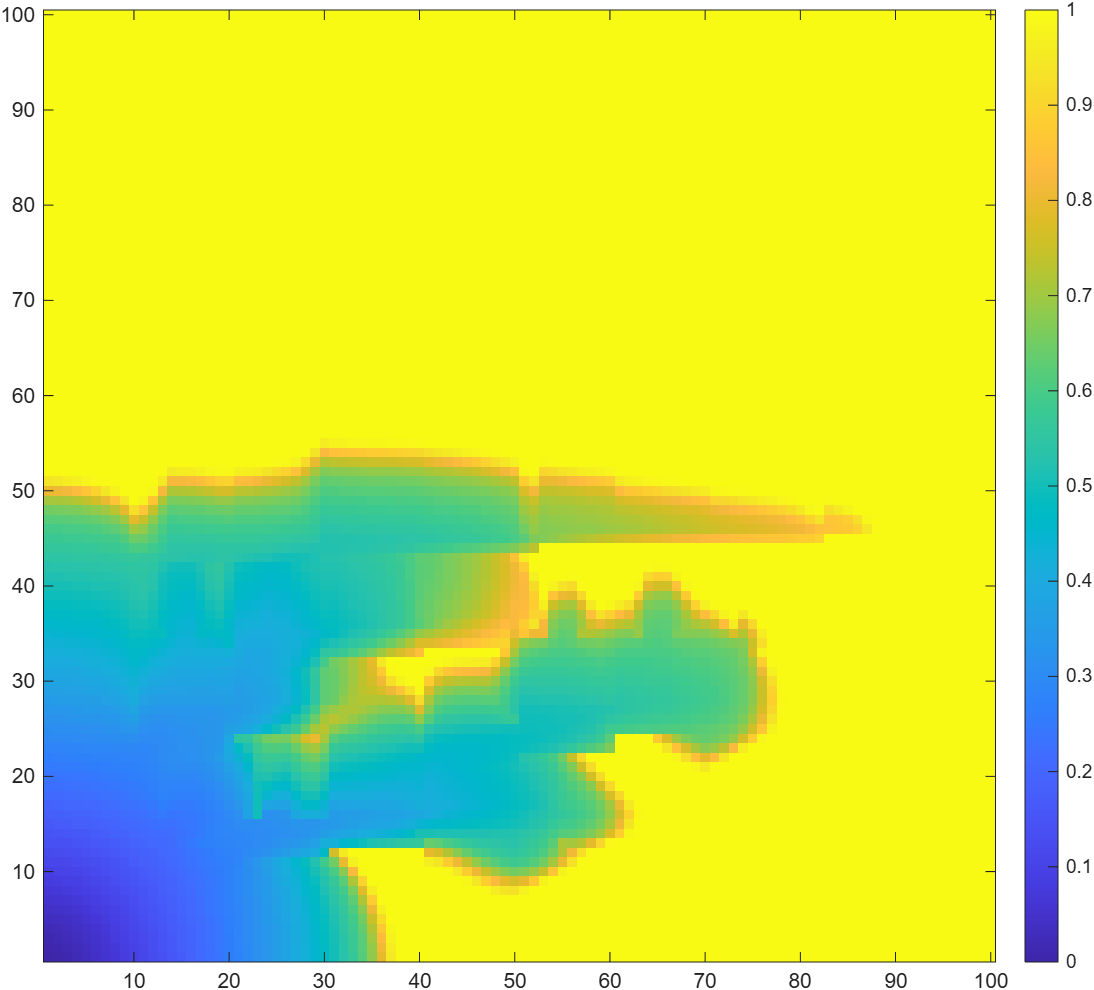}
        \caption{$S_{n,2}$ at $T=3$.}
        \label{fig:Sn2_T3}
    \end{subfigure}
    \hfill
    \begin{subfigure}{0.32\textwidth}
        \centering
        \includegraphics[width=\linewidth]{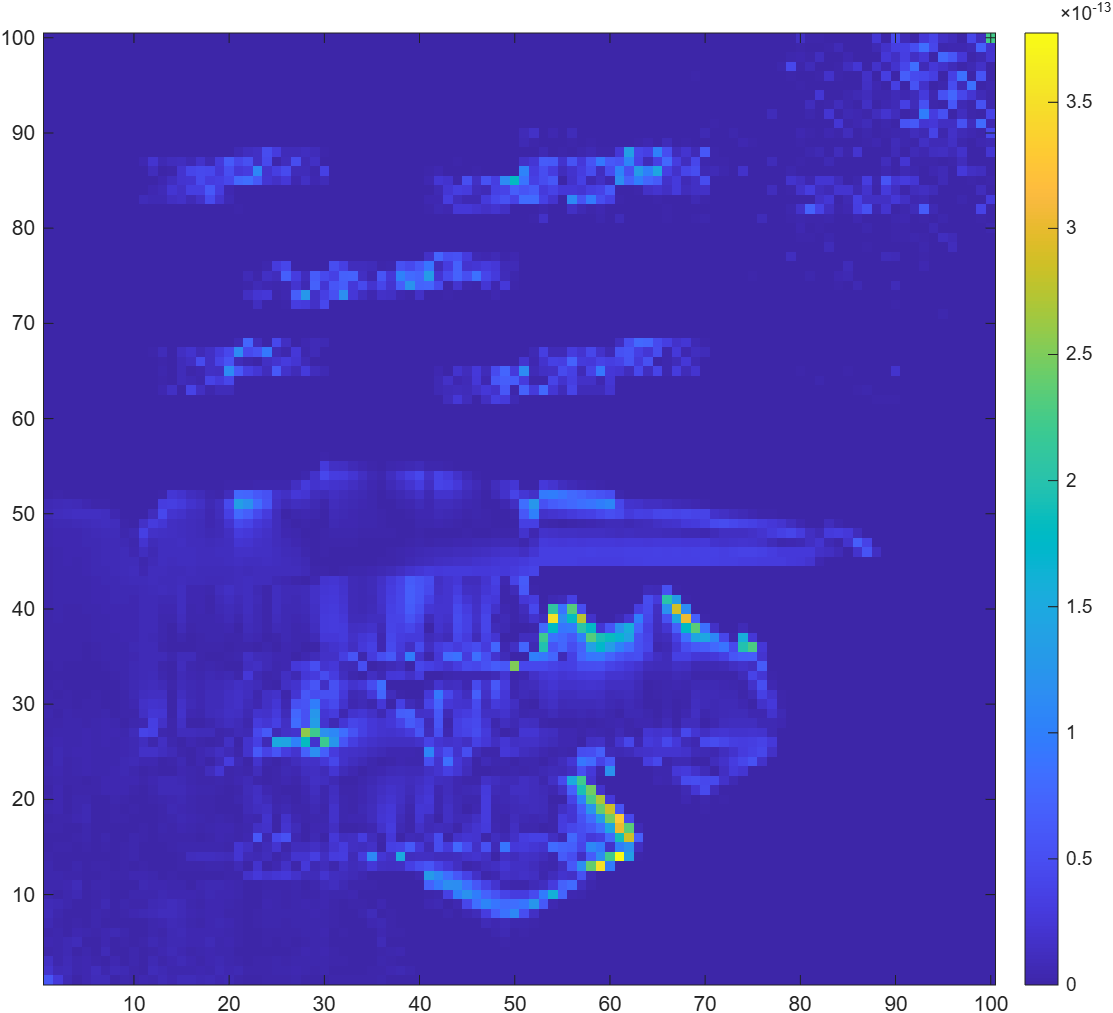}
        \caption{$|S_{n,1}-S_{n,2}|$ at $T=3$.}
        \label{fig:Sn_diff_T3}
    \end{subfigure}

    \caption*{Figure~\thefigthree. Two ways for non-wetting saturation $S_n$ at time $T=3$.}
\end{figure}

\setcounter{figure}{3}
Figure 3.1-3.3 shows the non-wetting phase saturation computed by the two different formulations at $T=1,2,3$, with their absolute differences. It can be seen that $S_{n,1}$ and $S_{n,2}$ produce nearly the same saturation distributions during the whole simulation. The difference $|S_{n,1}-S_{n,2}|$ remains at the order of about $10^{-13}$ throughout the simulation, indicating that the two formulations produce almost the same outcomes.\\

\textbf{Example 5.2. (Capillary pressure effect)} 
In this example, we investigate the influence of the capillary pressure on the two-phase flow simulation. Recall that the capillary pressure is modeled by the Hoteit-Firoozabadi formula $ p_c(S_w) = -\frac{B_c}{\sqrt{\mathbf{K}}}\log \bar{S}_w$, the parameter $B_c \ge 0$ controls the strength of the capillary effect. We test three representative cases: $B_c = 0$, $B_c = 10^{-3}$ and $B_c = 10^{-2}$ at different time $T=1,2,3$. For other parameters, we keep the same as in Example 5.1.
\setcounter{figfour}{0}

\refstepcounter{figfour}
\label{fig:capillary_Tgroup1}
\begin{figure}[H]
    \centering
    \setcounter{subfigure}{0}

    \begin{subfigure}{0.32\textwidth}
        \centering
        \includegraphics[width=\linewidth]{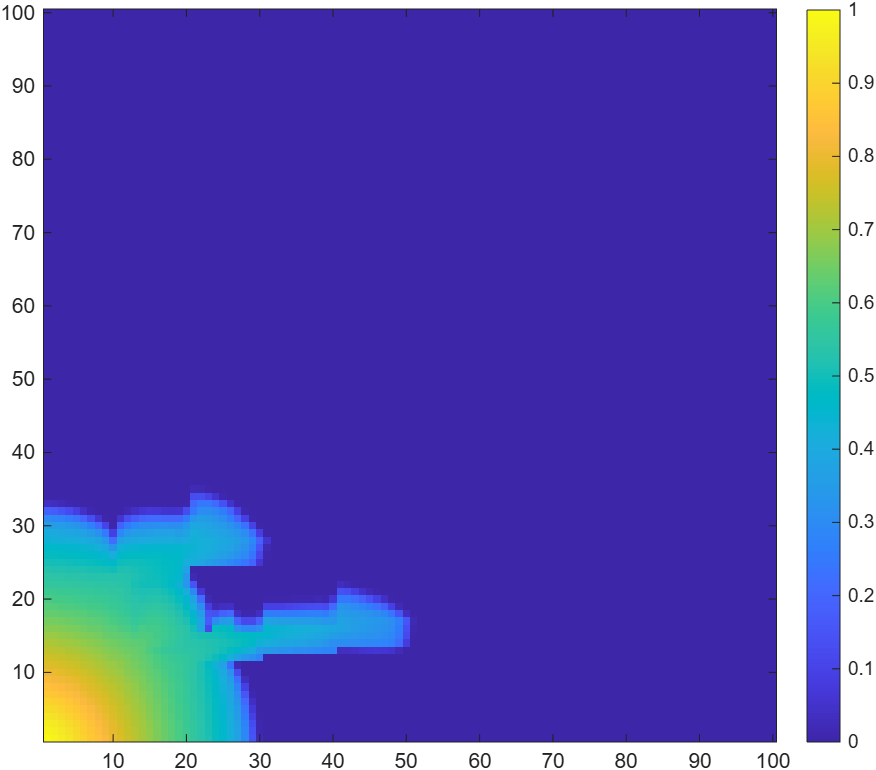}
        \caption{$B_c=0$ at $T=1$.}
        \label{fig:bc0_T1}
    \end{subfigure}
    \hfill
    \begin{subfigure}{0.32\textwidth}
        \centering
        \includegraphics[width=\linewidth]{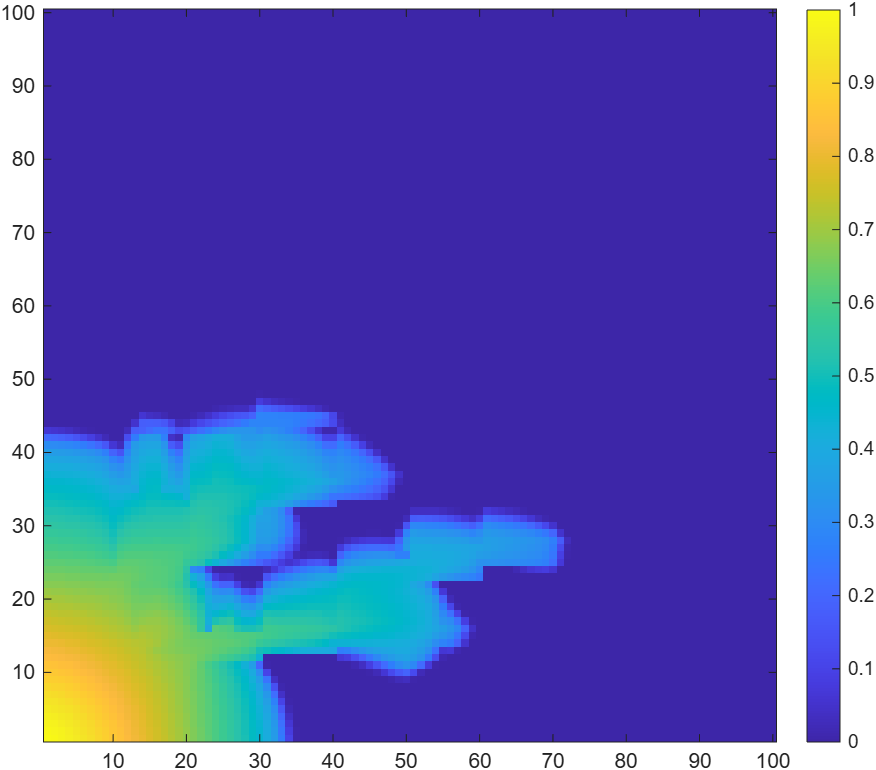}
        \caption{$B_c=0$ at $T=2$.}
        \label{fig:bc0_T2}
    \end{subfigure}
    \hfill
    \begin{subfigure}{0.32\textwidth}
        \centering
        \includegraphics[width=\linewidth]{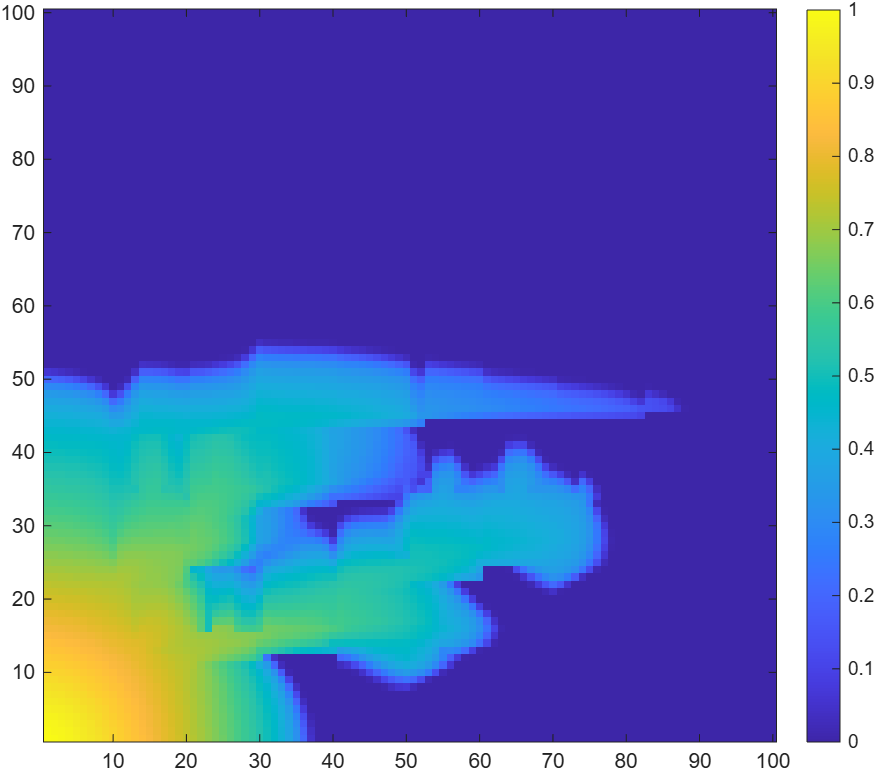}
        \caption{$B_c=0$ at $T=3$.}
        \label{fig:bc0_T3}
    \end{subfigure}

    \caption*{Figure~\thefigfour. Wetting-phase saturation $S_w$ for $B_c=0$ at $T=1,2,3$.}
\end{figure}

\refstepcounter{figfour}
\label{fig:capillary_Tgroup2}
\begin{figure}[H]
    \centering
    \setcounter{subfigure}{0}

    \begin{subfigure}{0.32\textwidth}
        \centering
        \includegraphics[width=\linewidth]{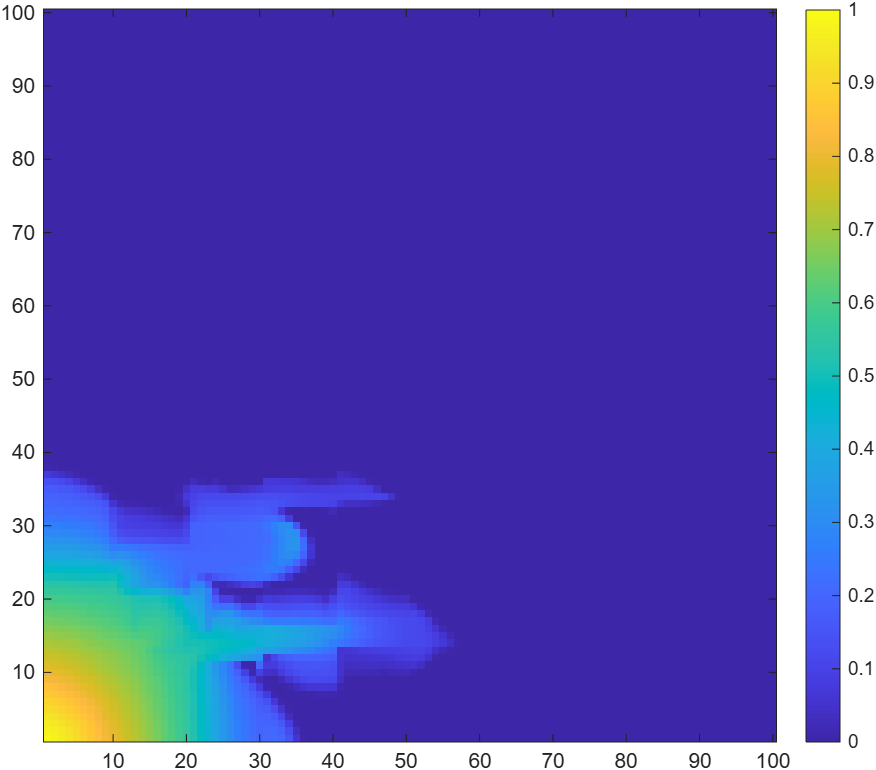}
        \caption{$B_c=10^{-3}$ at $T=1$.}
        \label{fig:bc1e-3_T1}
    \end{subfigure}
    \hfill
    \begin{subfigure}{0.32\textwidth}
        \centering
        \includegraphics[width=\linewidth]{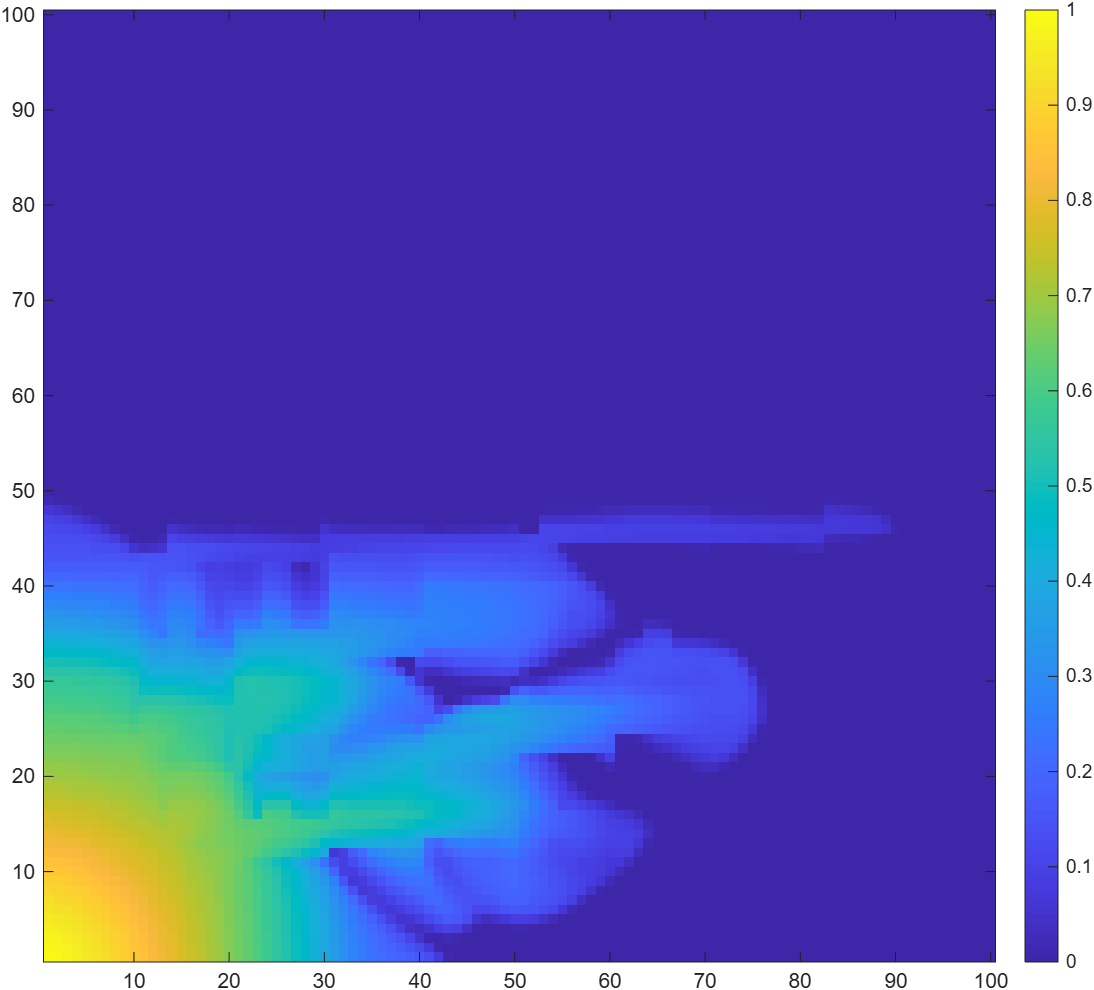}
        \caption{$B_c=10^{-3}$ at $T=2$.}
        \label{fig:bc1e-3_T2}
    \end{subfigure}
    \hfill
    \begin{subfigure}{0.32\textwidth}
        \centering
        \includegraphics[width=\linewidth]{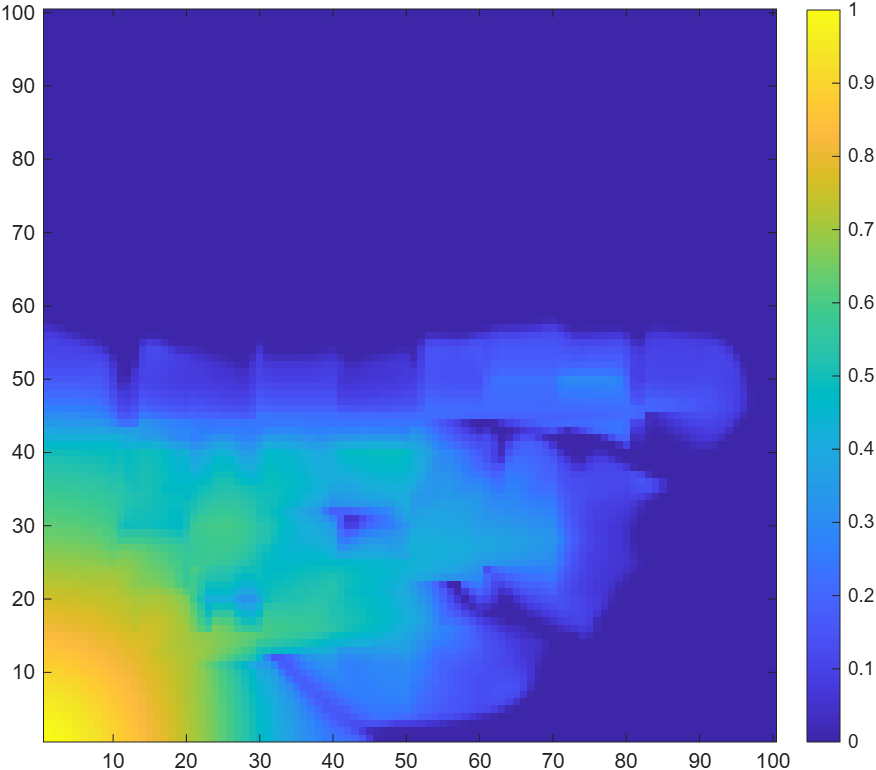}
        \caption{$B_c=10^{-3}$ at $T=3$.}
        \label{fig:bc1e-3_T3}
    \end{subfigure}

    \caption*{Figure~\thefigfour. Wetting-phase saturation $S_w$ for $B_c=10^{-3}$ at $T=1,2,3$.}
\end{figure}

\refstepcounter{figfour}
\label{fig:capillary_Tgroup3}
\begin{figure}[H]
    \centering
    \setcounter{subfigure}{0}

    \begin{subfigure}{0.32\textwidth}
        \centering
        \includegraphics[width=\linewidth]{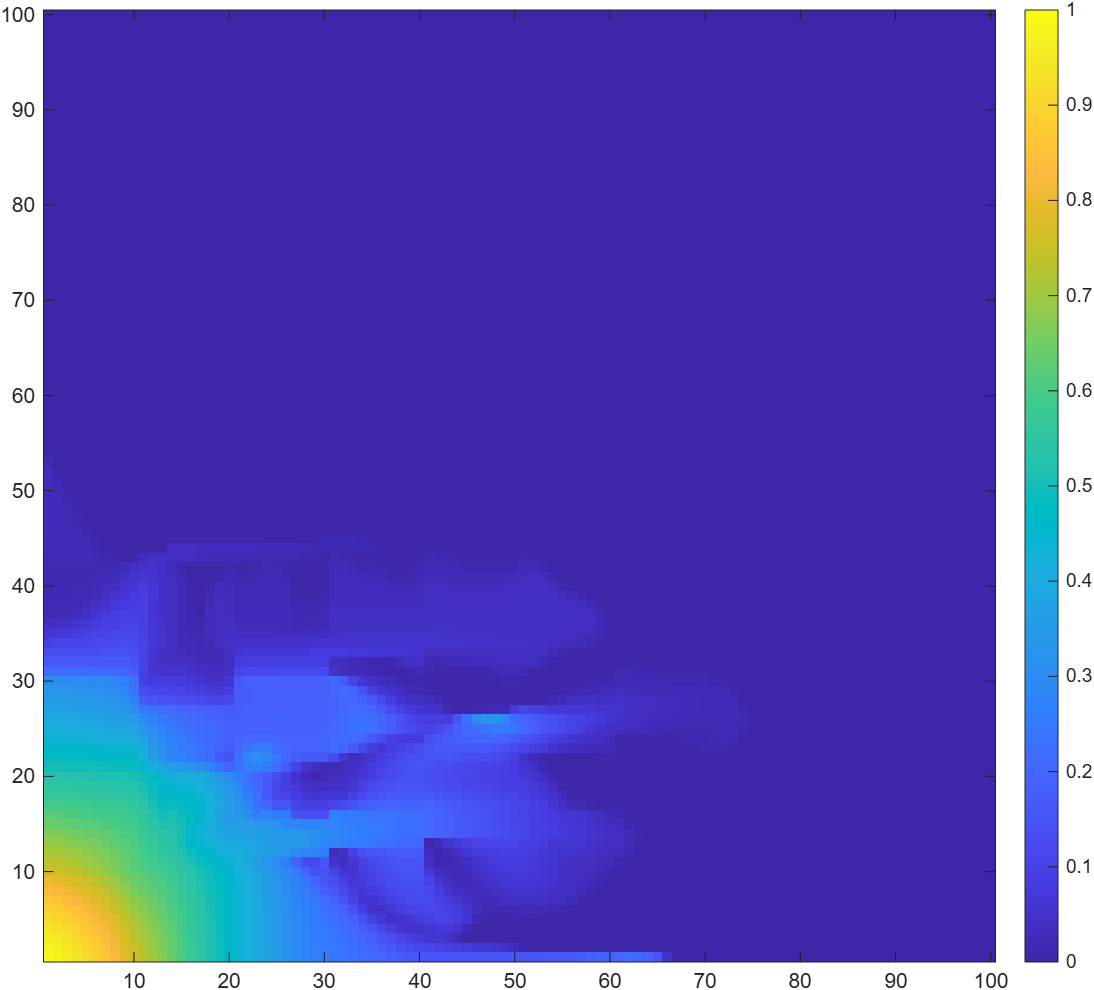}
        \caption{$B_c=10^{-2}$ at $T=1$.}
        \label{fig:bc1e-2_T1}
    \end{subfigure}
    \hfill
    \begin{subfigure}{0.32\textwidth}
        \centering
        \includegraphics[width=\linewidth]{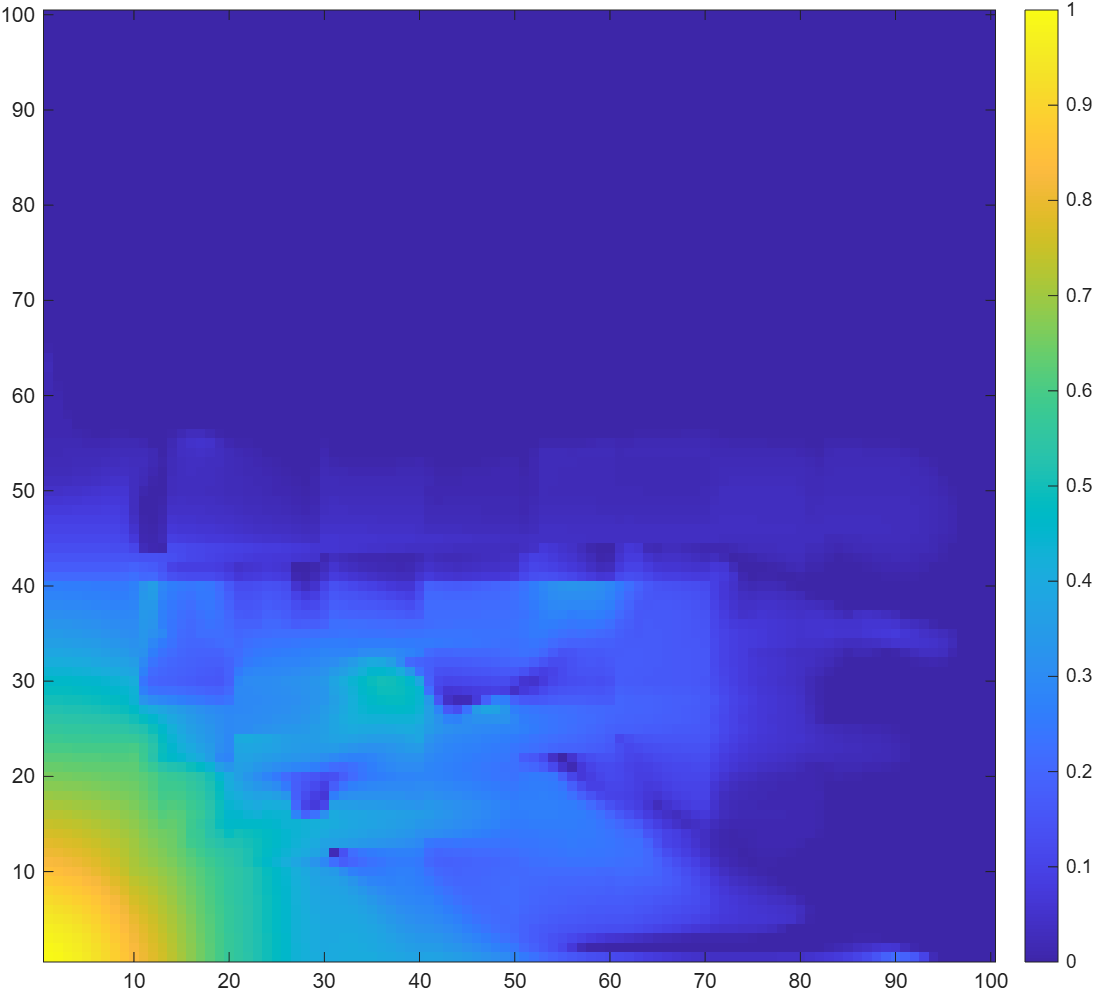}
        \caption{$B_c=10^{-2}$ at $T=2$.}
        \label{fig:bc1e-2_T2}
    \end{subfigure}
    \hfill
    \begin{subfigure}{0.32\textwidth}
        \centering
        \includegraphics[width=\linewidth]{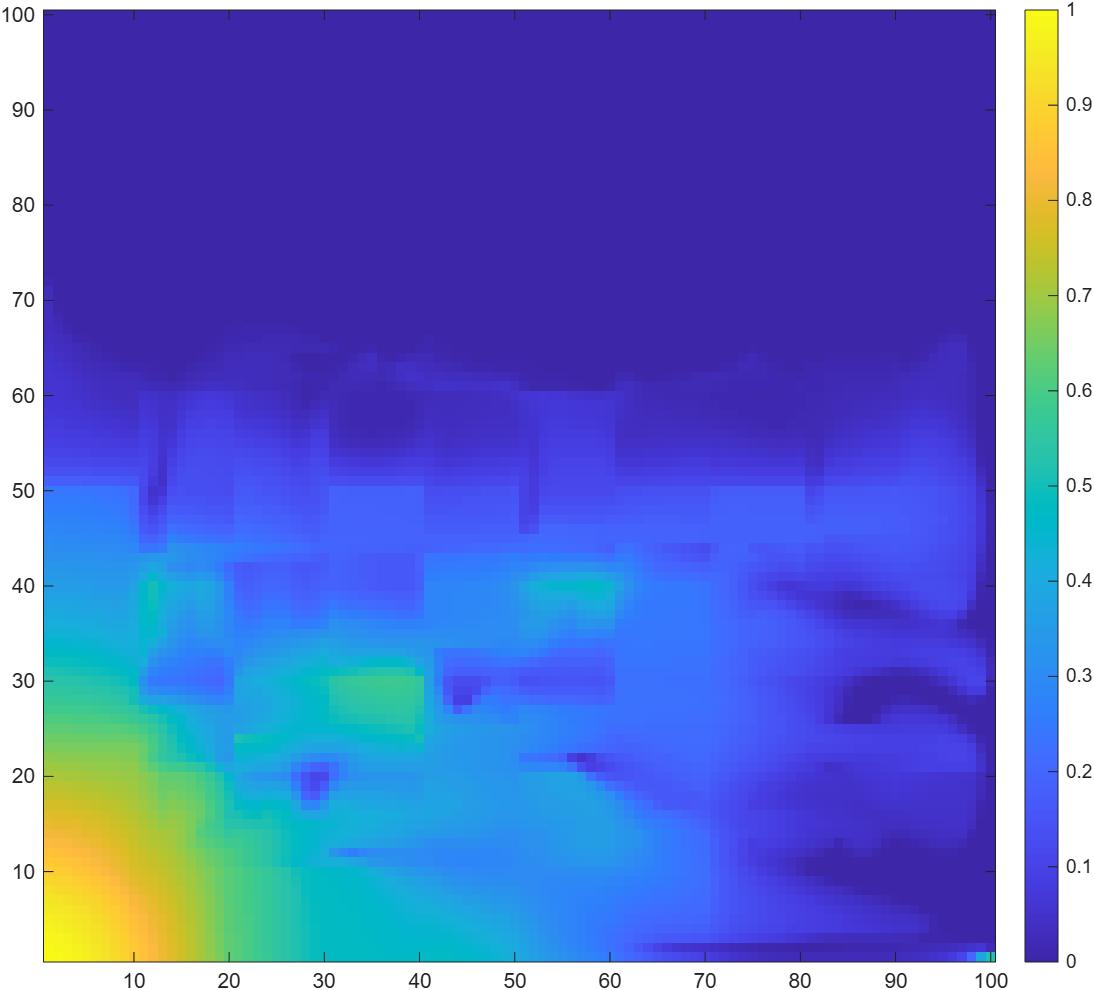}
        \caption{$B_c=10^{-2}$ at $T=3$.}
        \label{fig:bc1e-2_T3}
    \end{subfigure}

    \caption*{Figure~\thefigfour. Wetting-phase saturation $S_w$ for $B_c=10^{-2}$ at $T=1,2,3$.}
\end{figure}

\setcounter{figure}{4}

Figure 4.1-4.3 illustrate the influence of capillary pressure on the saturation evolution. When \(B_c=0\), the displacement is mainly dominated by the advection and the heterogeneous permeability field, so the wetting phase mainly moves through the high-permeability channels and the fronts remain relatively sharp. As \(B_c\) increases to \(10^{-3}\) and \(10^{-2}\), the capillary pressure introduces an additional diffusion-like effect, which smooths the saturation profile and broadens the transition region near the front. In particular, for \(B_c=10^{-2}\), the channelized displacement becomes less pronounced and the wetting phase spreads more uniformly into the surrounding region. These results show that stronger capillary pressure regularizes the saturation distribution and weakens sharp saturation discontinuities.

\textbf{Example 5.3.}
In this example, we study the influence of the adaptive update tolerance
$\varepsilon$, coarse element size $H$ and the number of multiscale basis functions on the velocity and saturation errors. We consider two representative high-contrast permeability fields. The first one is a high-permeability channel field, denoted by $\kappa_2$, the second one is taken from the top layer of the SPE10 permeability field and is denoted by $\kappa_3$. These two permeability fields are shown in the figure below. The capillary pressure parameter is set to be $B_c=0$ and fine-mesh size is still $h=0.01$. For other parameters, we keep the same as before.
\begin{figure}[H]
    \centering

    \begin{subfigure}{0.48\textwidth}
        \centering
        \includegraphics[width=\textwidth]{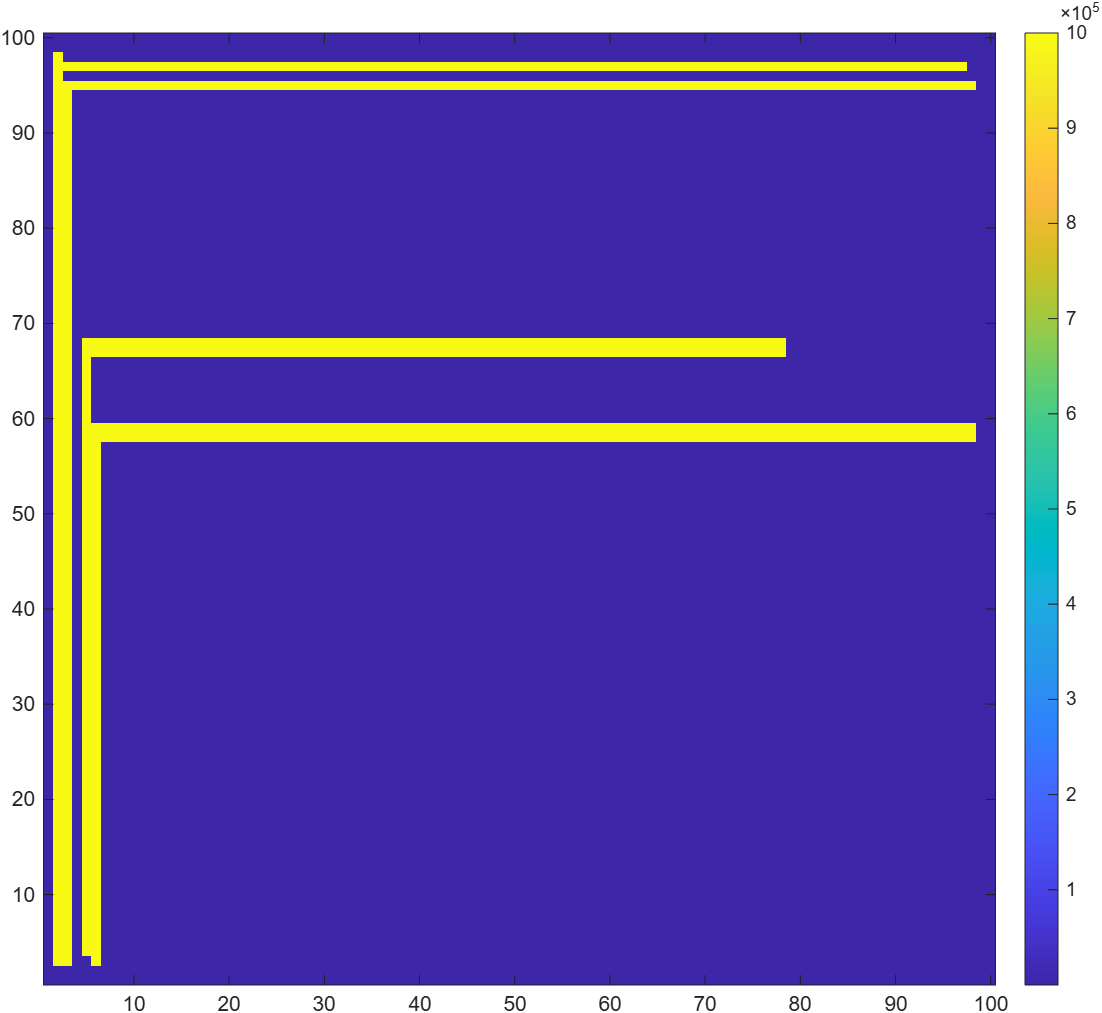}
        \caption{High-permeability channel: $\kappa_2$}
        \label{fig:initial_condition}
    \end{subfigure}
    \hfill
    \begin{subfigure}{0.48\textwidth}
        \centering
        \includegraphics[width=\textwidth]{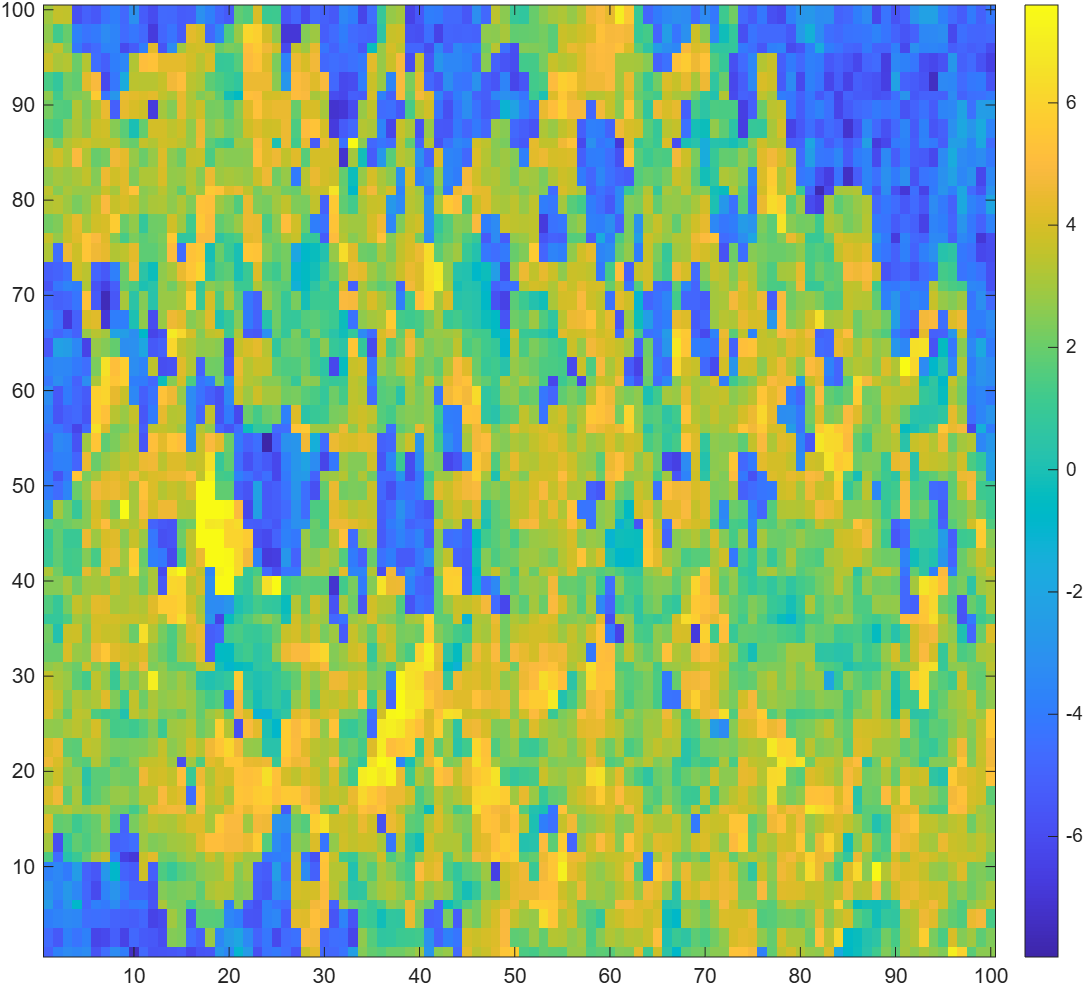}
        \caption{SPE10 model: $\log(\kappa_3)$}
        \label{fig:kappa_1}
    \end{subfigure}
\end{figure}

First, we investigate the influence of the coarse mesh size $H$ and the number of multiscale basis functions on the velocity and pressure errors at the initial time. The results are shown in Table 1 and Table 2, respectively.

\begin{table}[htbp]
\centering

\label{tab:channel_kappa2_convergence}
\small
\renewcommand{\arraystretch}{1.10}
\setlength{\tabcolsep}{4.5pt}
\begin{tabular}{c|cc|cc|cc|cc}
\hline
\multirow{3}{*}{$H$}
& \multicolumn{8}{c}{Number of basis functions per coarse element} \\
\cline{2-9}
& \multicolumn{2}{c|}{$n=1$}
& \multicolumn{2}{c|}{$n=2$}
& \multicolumn{2}{c|}{$n=3$}
& \multicolumn{2}{c}{$n=4$} \\
\cline{2-9}
& $e_u$ & $e_p$
& $e_u$ & $e_p$
& $e_u$ & $e_p$
& $e_u$ & $e_p$ \\
\hline
$0.10$ & $21.6223\%$ & $88.7208\%$ & $17.0759\%$ & $41.5622\%$ & $8.7309\%$ & $9.9324\%$ & $8.2117\%$ & $9.6430\%$ \\
$0.05$ & $29.1088\%$ & $38.6599\%$ & $13.1360\%$ & $16.3047\%$ & $2.6884\%$ & $12.5234\%$ & $1.9773\%$ & $2.8974\%$ \\
$0.04$ & $38.4517\%$ & $51.2609\%$ & $2.0160\%$ & $13.9070\%$ & $1.8241\%$ & $10.1284\%$ & $1.6149\%$ & $3.5814\%$ \\
$0.02$ & $0.0838\%$ & $21.1459\%$ & $0.2611\%$ & $14.0518\%$ & $0.0838\%$ & $4.4090\%$ & $0.0000\%$ & $0.0000\%$ \\
\hline
\end{tabular}
\caption{Convergence of the multiscale solution for $\kappa_2$.}
\end{table}

\begin{table}[H]
\centering

\label{tab:channel_kappa2_convergence}
\small
\renewcommand{\arraystretch}{1.10}
\setlength{\tabcolsep}{4.5pt}
\begin{tabular}{c|cc|cc|cc|cc}
\hline
\multirow{3}{*}{$H$}
& \multicolumn{8}{c}{Number of basis functions per coarse element} \\
\cline{2-9}
&\multicolumn{2}{c|}{$n=1$}
& \multicolumn{2}{c|}{$n=2$}
& \multicolumn{2}{c|}{$n=3$}
& \multicolumn{2}{c}{$n=4$} \\
\cline{2-9}
& $e_u$ & $e_p$
& $e_u$ & $e_p$
& $e_u$ & $e_p$
& $e_u$ & $e_p$ \\
\hline
$0.10$ & $0.3653\%$ & $21.0482\%$ & $0.2657\%$ & $17.2137\%$ & $0.2300\%$ & $11.4032\%$ & $0.2208\%$ & $7.0975\%$ \\
$0.05$  & $0.1002\%$ & $16.1050\%$ & $0.2788\%$ & $12.4962\%$ & $0.0769\%$ & $7.0368\%$  & $0.0846\%$ & $4.9290\%$ \\
$0.04$  & $0.0886\%$ & $15.0004\%$ & $0.2052\%$ & $9.9129\%$  & $0.0761\%$ & $6.0975\%$  & $0.0604\%$ & $4.6402\%$ \\
$0.02$  & $0.0324\%$ & $8.5261\%$  & $0.0515\%$ & $6.8928\%$  & $0.0231\%$ & $2.2568\%$  & $0.0153\%$ & $0.0000\%$ \\
\hline
\end{tabular}
\caption{Convergence of the multiscale solution for $\kappa_3$.}
\end{table}

Next, we investigate the evolution of the saturation error for the two
permeability fields $\kappa_2$ and $\kappa_3$. In particular, we study how
the saturation error $e_S$ depends on the adaptive update tolerance
$\varepsilon$. To balance computational time and accuracy, we select the parameters $H$ and $n$ according to the Table 1 and Table 2, where the corresponding velocity error $e_u$ is sufficiently small. These results are performed at different final times and for different permeability fields.

\begin{figure}[htbp]
    \centering

    \begin{subfigure}{0.32\textwidth}
        \centering
        \includegraphics[width=\linewidth]{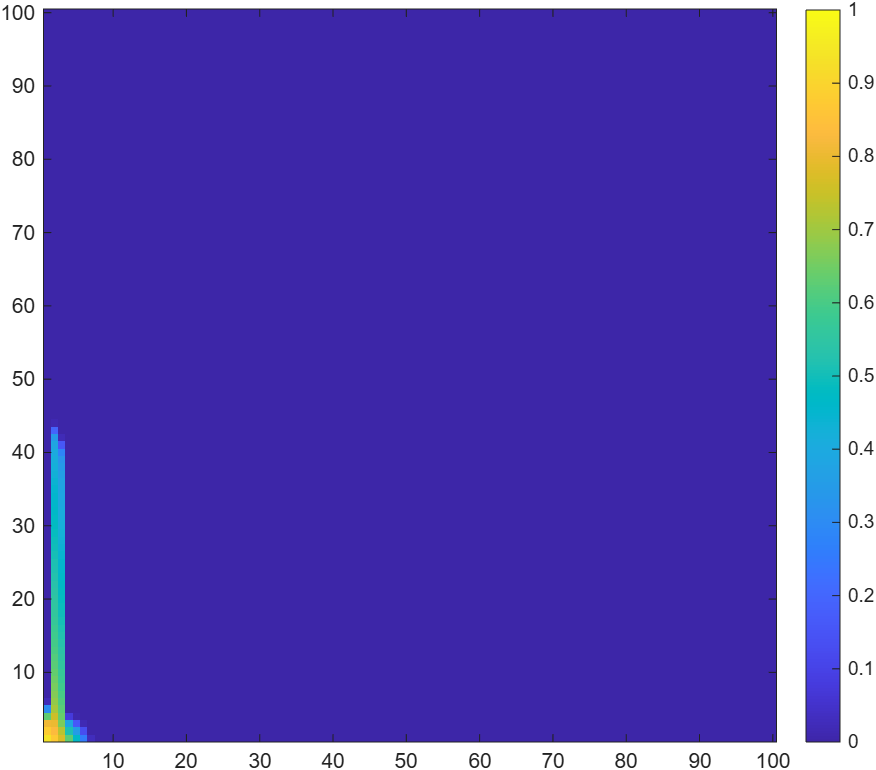}
        \caption{ $T=0.1$}
        \label{fig:ref_kappa2_T01}
    \end{subfigure}
    \hfill
    \begin{subfigure}{0.32\textwidth}
        \centering
        \includegraphics[width=\linewidth]{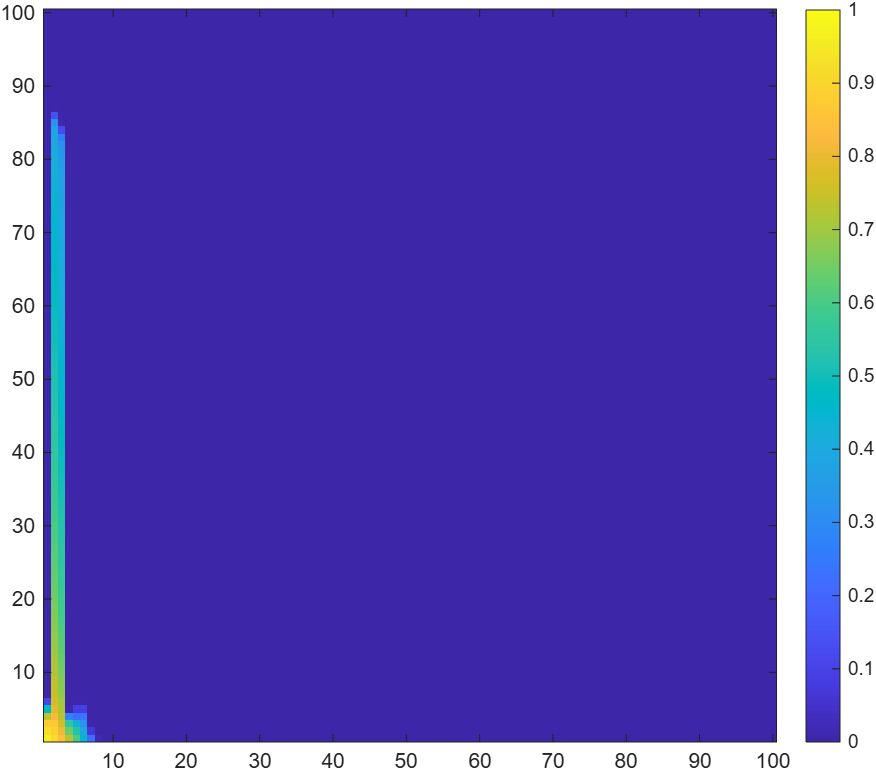}
        \caption{ $T=0.2$}
        \label{fig:ref_kappa2_T02}
    \end{subfigure}
    \hfill
    \begin{subfigure}{0.32\textwidth}
        \centering
        \includegraphics[width=\linewidth]{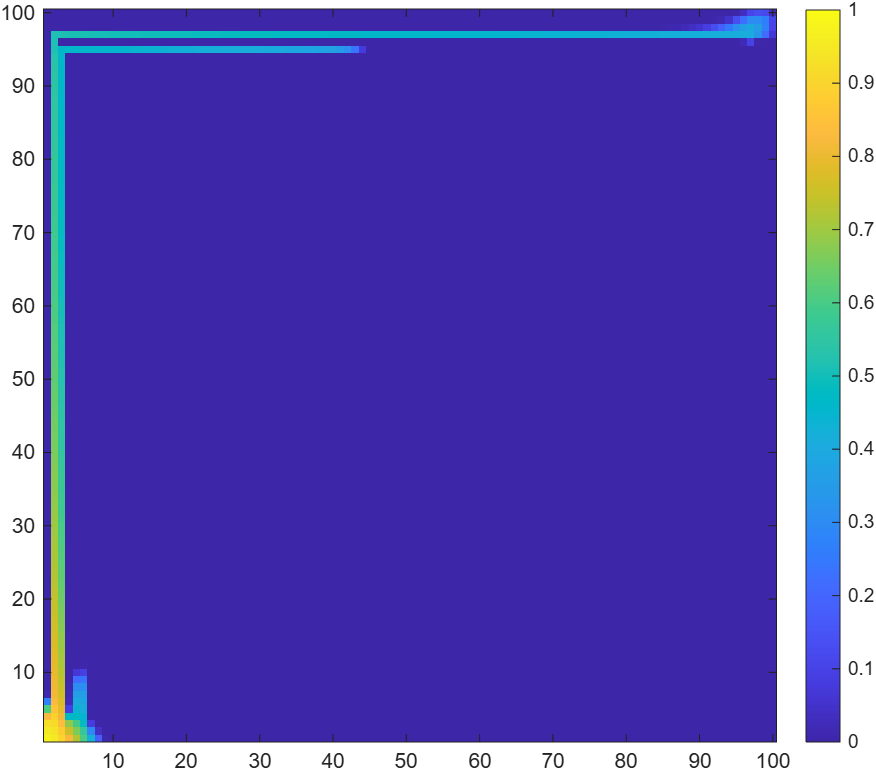}
        \caption{$T=0.4$}
        \label{fig:ref_kappa2_T03}
    \end{subfigure}

    \caption{Reference solutions of the wetting-phase saturation
    $S_w^h$ in the channel field $\kappa_2$ at different
    final times.}
    \label{fig:reference_solution_kappa2}
\end{figure}

\setcounter{figseven}{0}

\refstepcounter{figseven}
\begin{figure}[H]
    \centering

    \begin{subfigure}{0.32\textwidth}
        \centering
        \includegraphics[width=\linewidth]{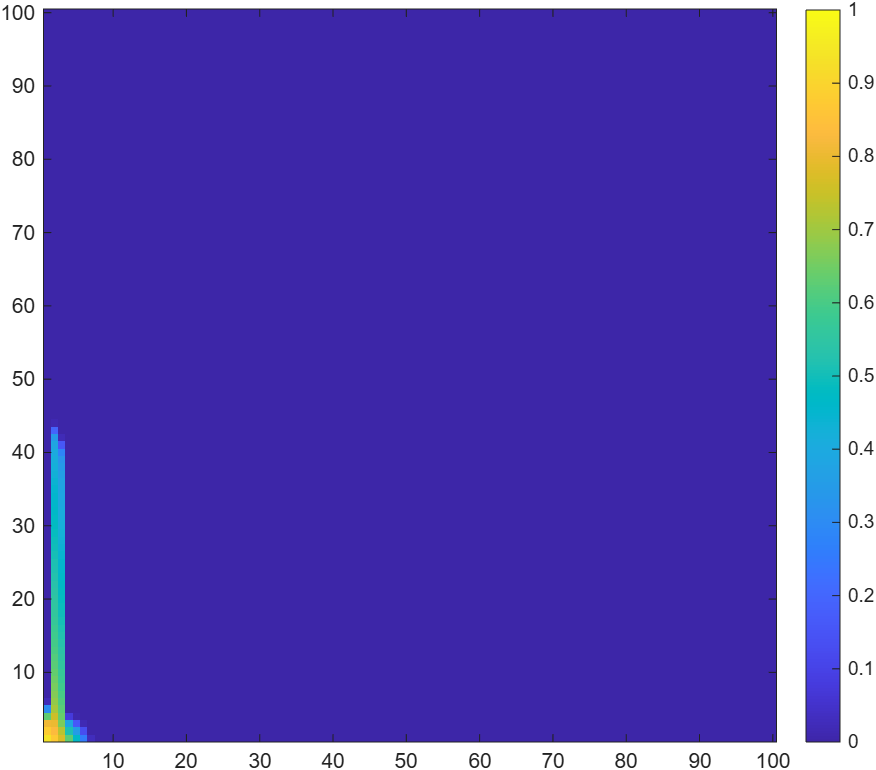}
        \caption{$\varepsilon=0.05$, $e_S=13.09\%$}
    \end{subfigure}
    \hfill
    \begin{subfigure}{0.32\textwidth}
        \centering
        \includegraphics[width=\linewidth]{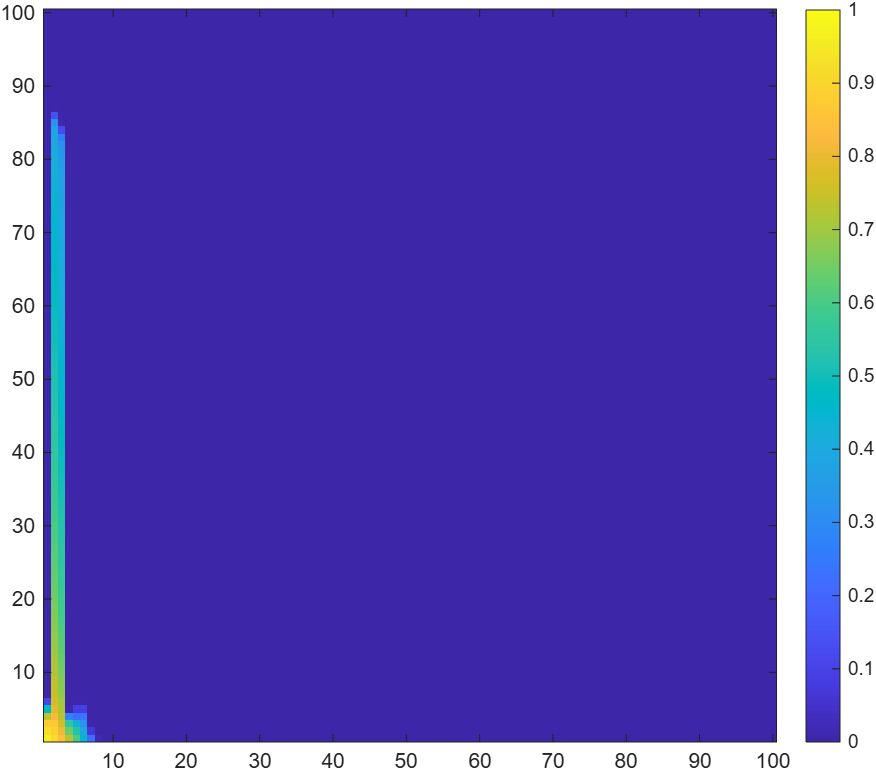}
        \caption{$\varepsilon=0.05$, $e_S=13.67\%$}
    \end{subfigure}
    \hfill
    \begin{subfigure}{0.32\textwidth}
        \centering
        \includegraphics[width=\linewidth]{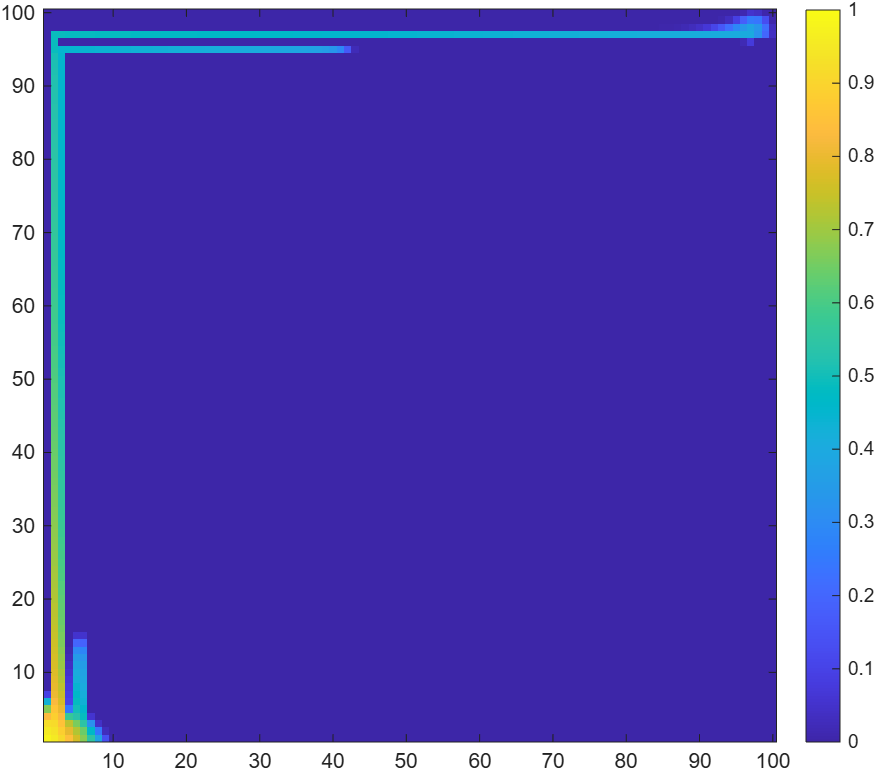}
        \caption{$\varepsilon=0.05$, $e_S=12.04\%$}
    \end{subfigure}

    \caption*{Figure~\thefigseven. Multiscale solutions with $H=0.02$ and one basis function per coarse element for $\varepsilon=0.05$ at the corresponding $T=0.1,0.2,0.4$.}
    \label{fig:7_1}
\end{figure}

\vspace{-3mm}

\refstepcounter{figseven}
\begin{figure}[H]
    \centering

    \begin{subfigure}{0.32\textwidth}
        \centering
        \includegraphics[width=\linewidth]{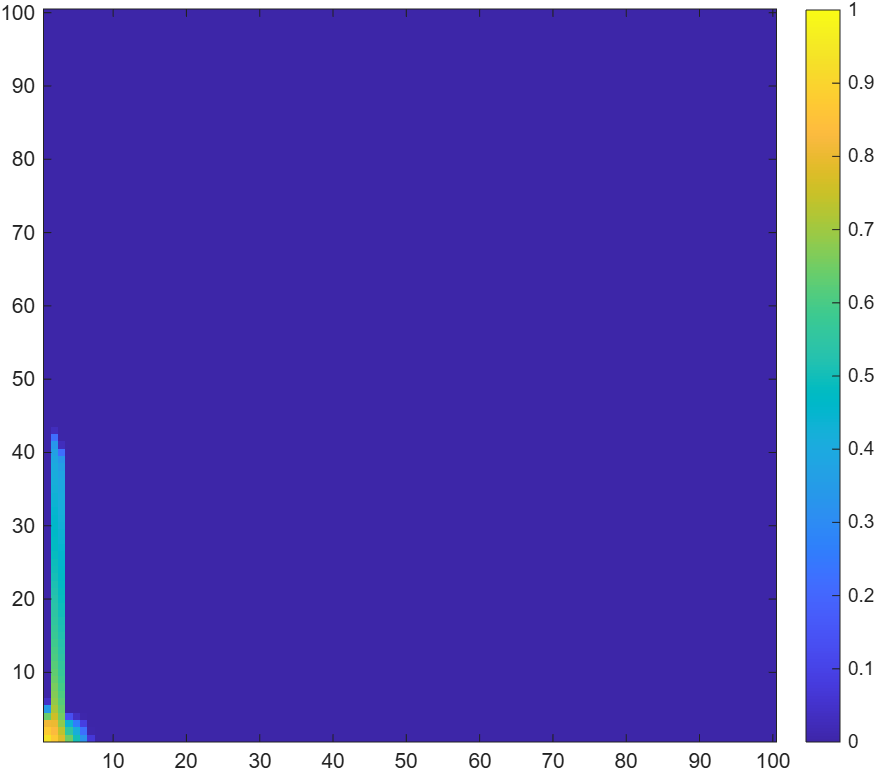}
        \caption{$\varepsilon=0.025$, $e_S=6.64\%$}
    \end{subfigure}
    \hfill
    \begin{subfigure}{0.32\textwidth}
        \centering
        \includegraphics[width=\linewidth]{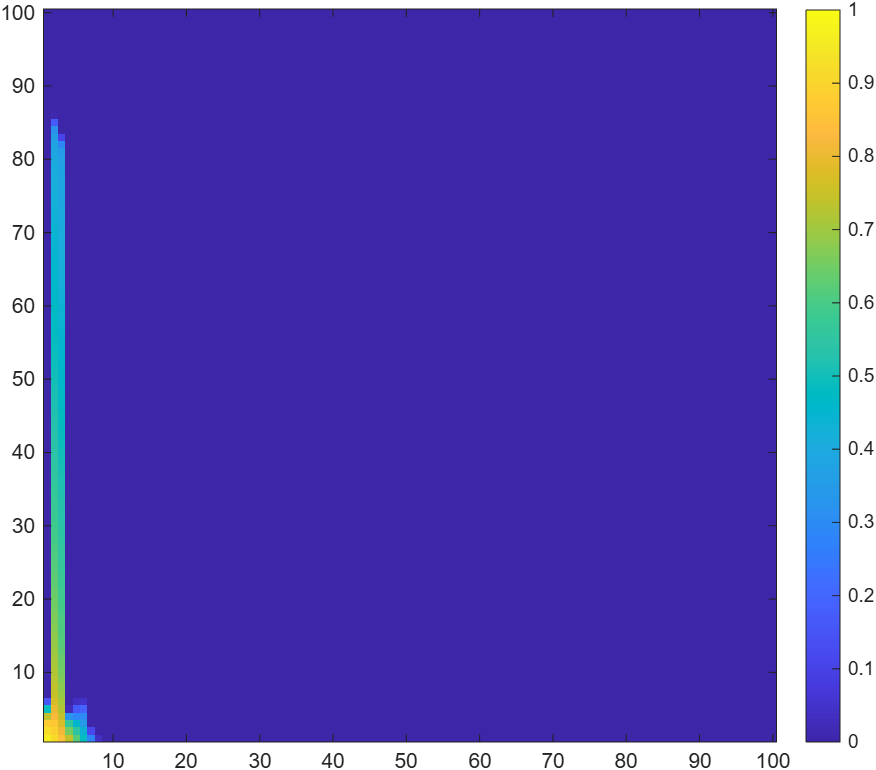}
        \caption{$\varepsilon=0.025$, $e_S=5.12\%$}
    \end{subfigure}
    \hfill
    \begin{subfigure}{0.32\textwidth}
        \centering
        \includegraphics[width=\linewidth]{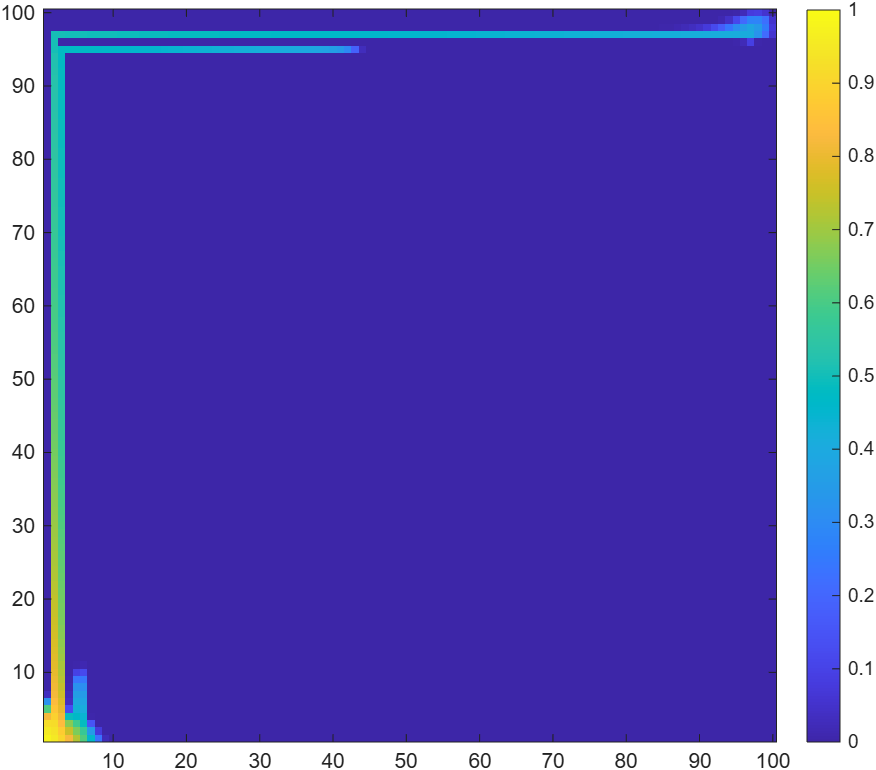}
        \caption{$\varepsilon=0.025$, $e_S=2.67\%$}
    \end{subfigure}

    \caption*{Figure~\thefigseven. Multiscale solutions with $H=0.02$ and one basis function per coarse element for $\varepsilon=0.025$ at the corresponding $T=0.1,0.2,0.4$.}
    \label{fig:7_2}
\end{figure}

\vspace{-3mm}

\refstepcounter{figseven}
\begin{figure}[H]
    \centering

    \begin{subfigure}{0.32\textwidth}
        \centering
        \includegraphics[width=\linewidth]{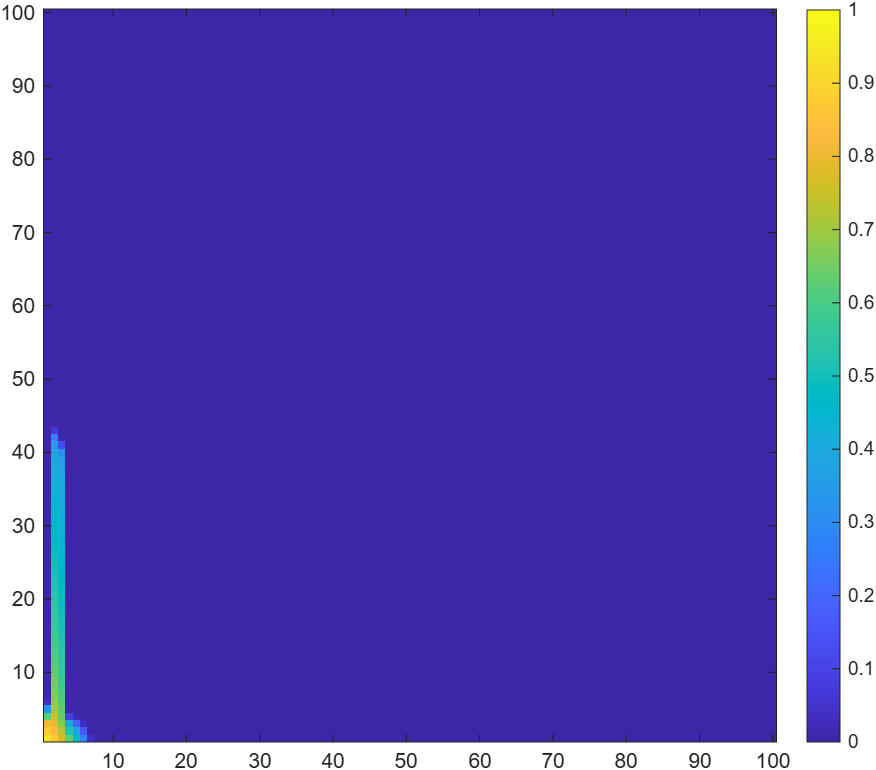}
        \caption{$\varepsilon=0.01$, $e_S=3.69\%$}
    \end{subfigure}
    \hfill
    \begin{subfigure}{0.32\textwidth}
        \centering
        \includegraphics[width=\linewidth]{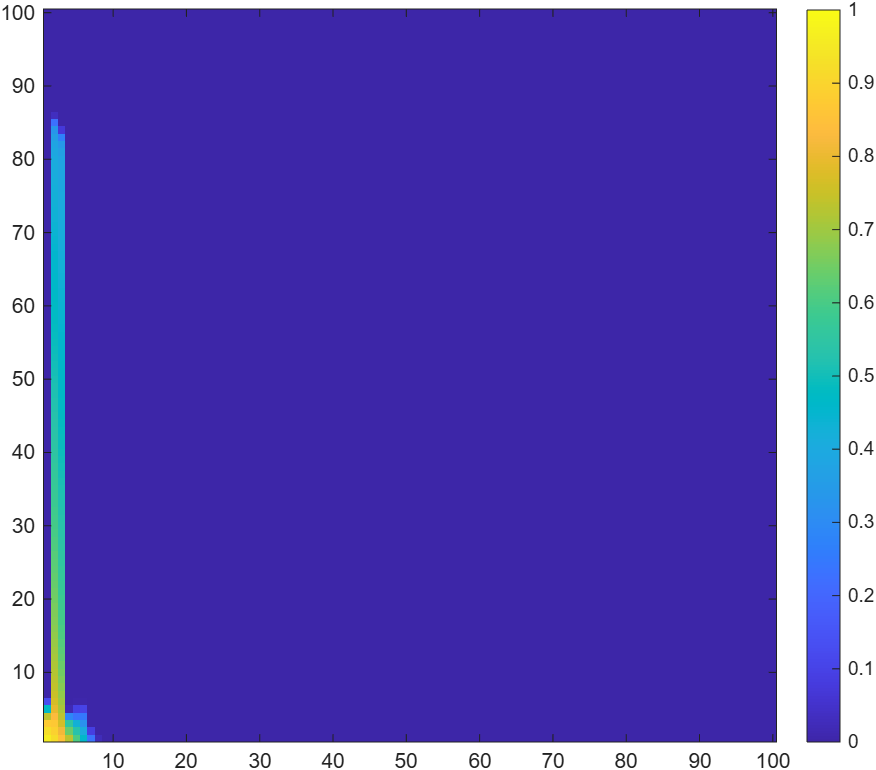}
        \caption{$\varepsilon=0.01$, $e_S=2.88\%$}
    \end{subfigure}
    \hfill
    \begin{subfigure}{0.32\textwidth}
        \centering
        \includegraphics[width=\linewidth]{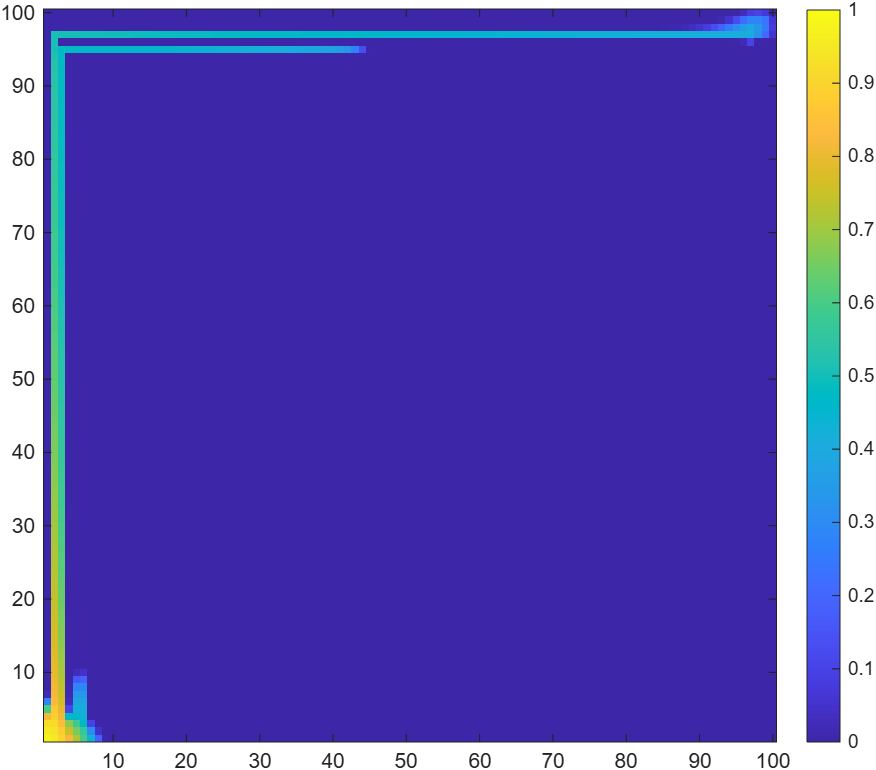}
        \caption{$\varepsilon=0.01$, $e_S=1.95\%$}
    \end{subfigure}

    \caption*{Figure~\thefigseven. Multiscale solutions with $H=0.02$ and one basis function per coarse element for $\varepsilon=0.01$ at the corresponding $T=0.1,0.2,0.4$.}
    \label{fig:7_3}
\end{figure}

Next, we use the SPE10 permeability field $\kappa_3$ to test the influence of different adaptive tolerances $\varepsilon$ at different final times.
\setcounter{figure}{7}
\begin{figure}[htbp]
    \centering

    \begin{subfigure}{0.32\textwidth}
        \centering
        \includegraphics[width=\linewidth]{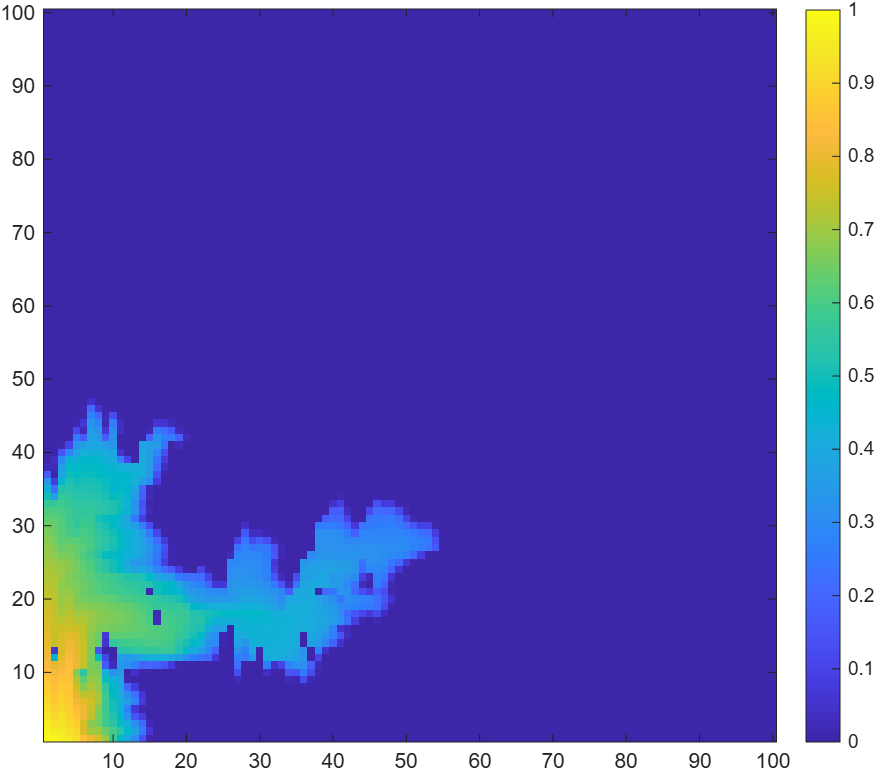}
        \caption{ $T=1$}
    
    \end{subfigure}
    \hfill
    \begin{subfigure}{0.32\textwidth}
        \centering
        \includegraphics[width=\linewidth]{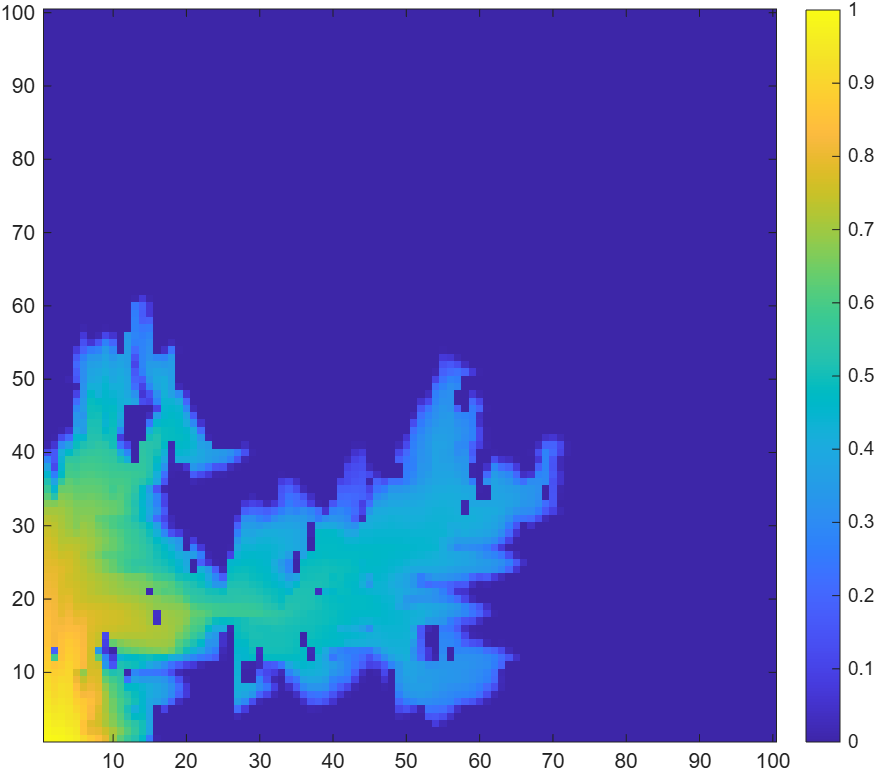}
        \caption{ $T=2$}
    
    \end{subfigure}
    \hfill
    \begin{subfigure}{0.32\textwidth}
        \centering
        \includegraphics[width=\linewidth]{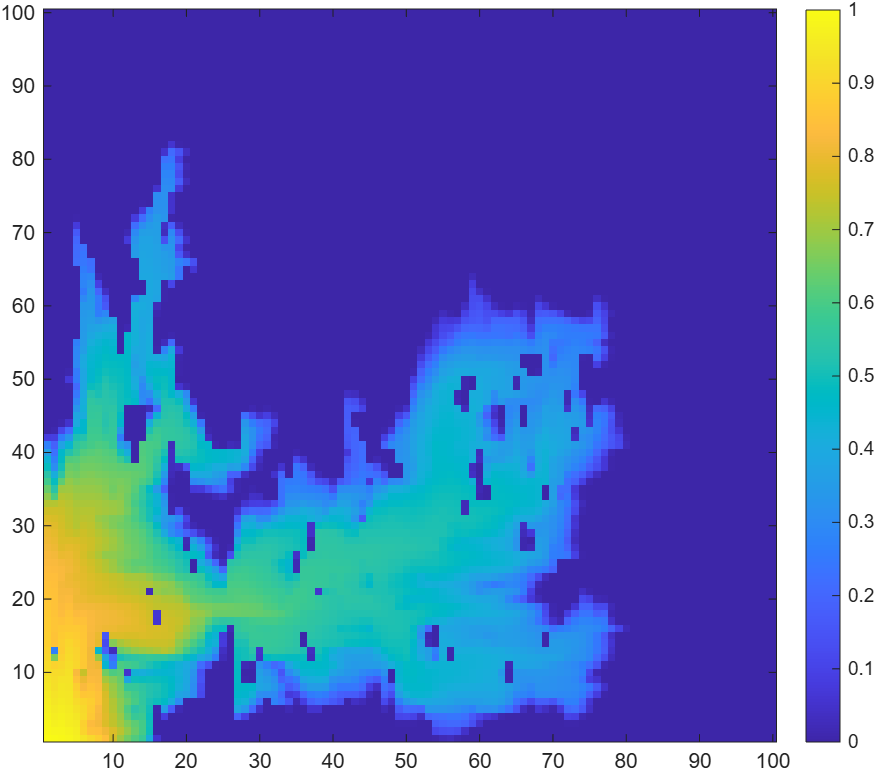}
        \caption{$T=3$}

    \end{subfigure}

    \caption{Reference solutions of the wetting-phase saturation
    $S_w^h$ in the SPE10 field $\kappa_3$ at different
    final times.}
\end{figure}

\refstepcounter{fignine}
\begin{figure}[H]
    \centering

    \begin{subfigure}{0.32\textwidth}
        \centering
        \includegraphics[width=\linewidth]{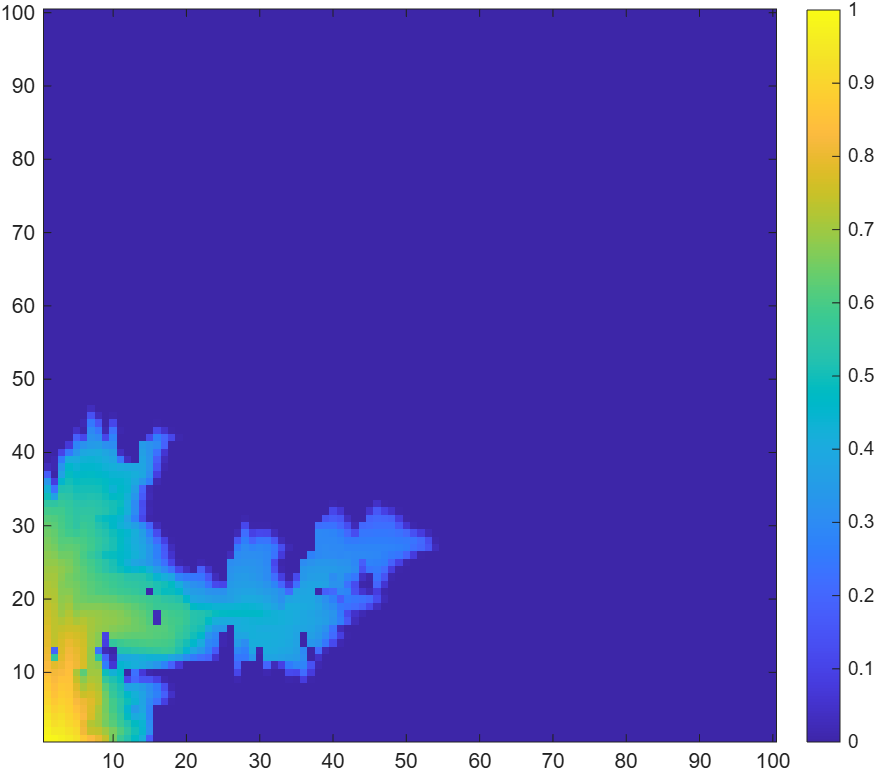}
        \caption{$\varepsilon=1$, $e_S=12.89\%$}
    \end{subfigure}
    \hfill
    \begin{subfigure}{0.32\textwidth}
        \centering
        \includegraphics[width=\linewidth]{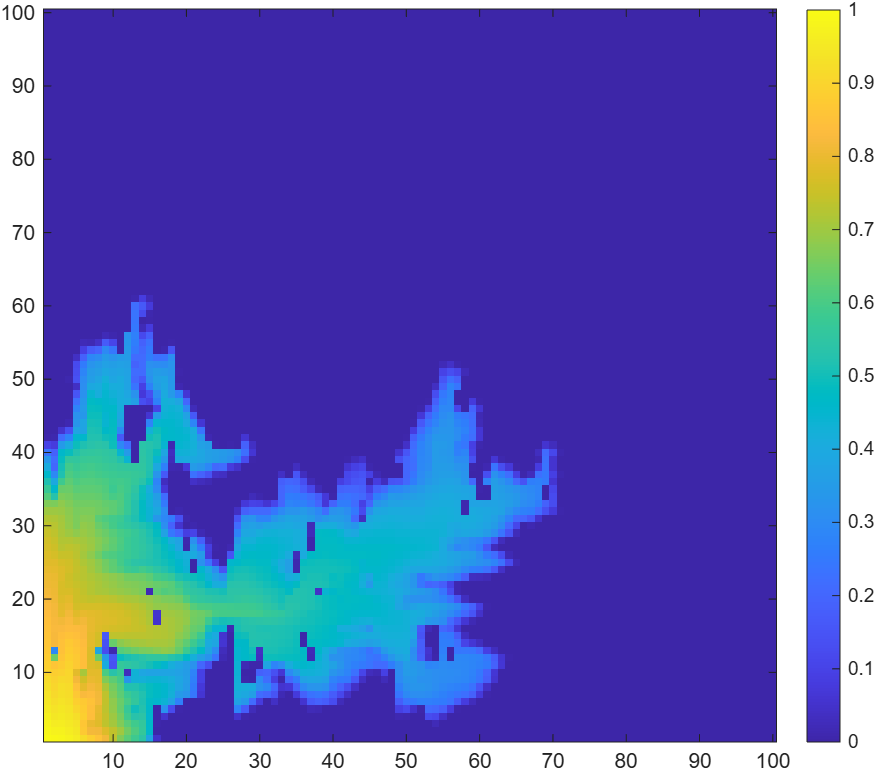}
        \caption{$\varepsilon=1$, $e_S=11.00\%$}
    \end{subfigure}
    \hfill
    \begin{subfigure}{0.32\textwidth}
        \centering
        \includegraphics[width=\linewidth]{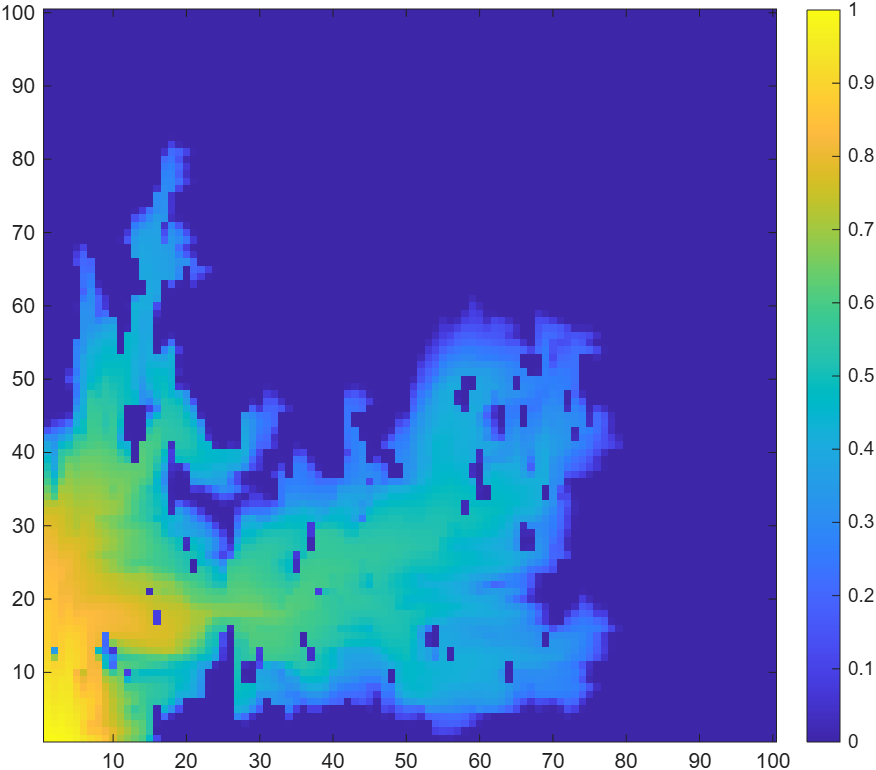}
        \caption{$\varepsilon=1$, $e_S=11.41\%$}
    \end{subfigure}

    \caption*{Figure~\thefignine. Multiscale solutions with $H=0.1$ and $3$ basis functions per coarse element for $\varepsilon=1$ at the corresponding $T=1,2,3$.}
    \label{fig:9_1}
\end{figure}

\refstepcounter{fignine}
\begin{figure}[H]
    \centering

    \begin{subfigure}{0.32\textwidth}
        \centering
        \includegraphics[width=\linewidth]{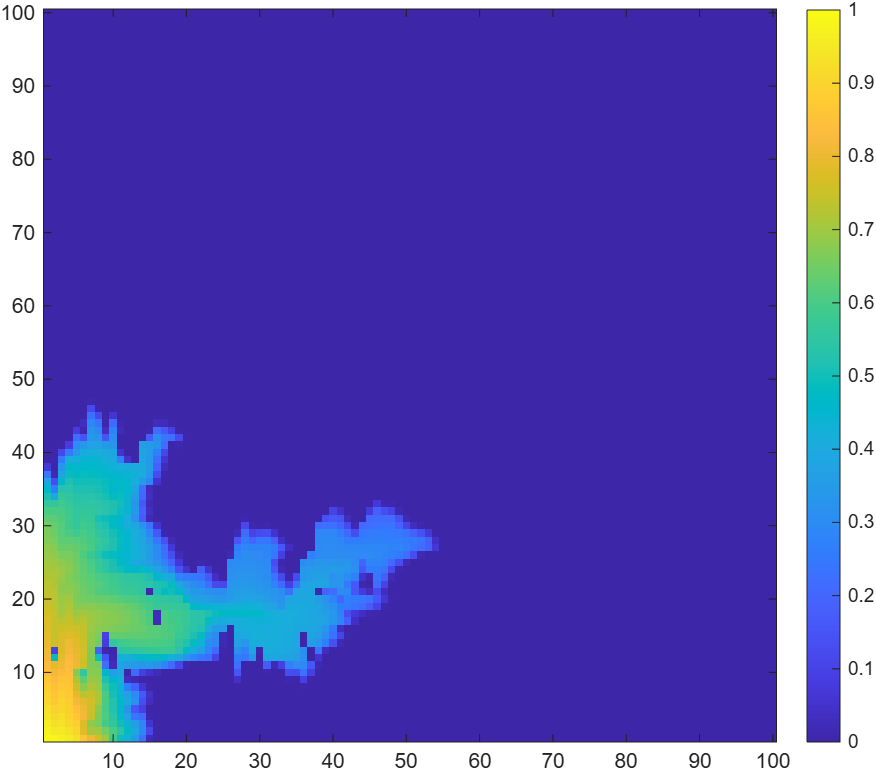}
        \caption{$\varepsilon=0.5$, $e_S=7.28\%$}
    \end{subfigure}
    \hfill
    \begin{subfigure}{0.32\textwidth}
        \centering
        \includegraphics[width=\linewidth]{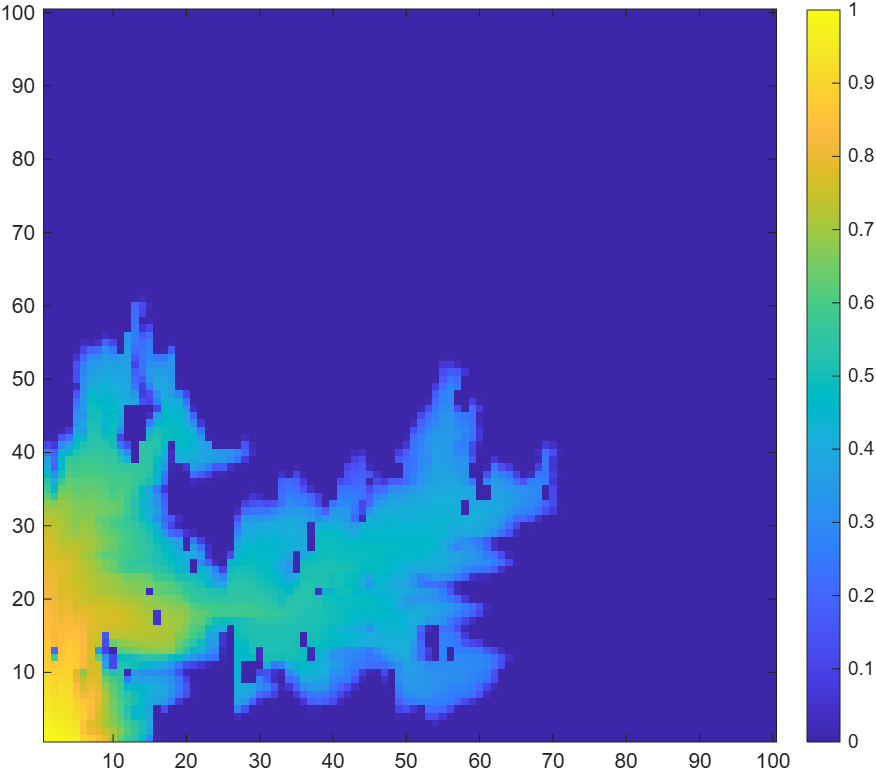}
        \caption{$\varepsilon=0.5$, $e_S=7.47\%$}
    \end{subfigure}
    \hfill
    \begin{subfigure}{0.32\textwidth}
        \centering
        \includegraphics[width=\linewidth]{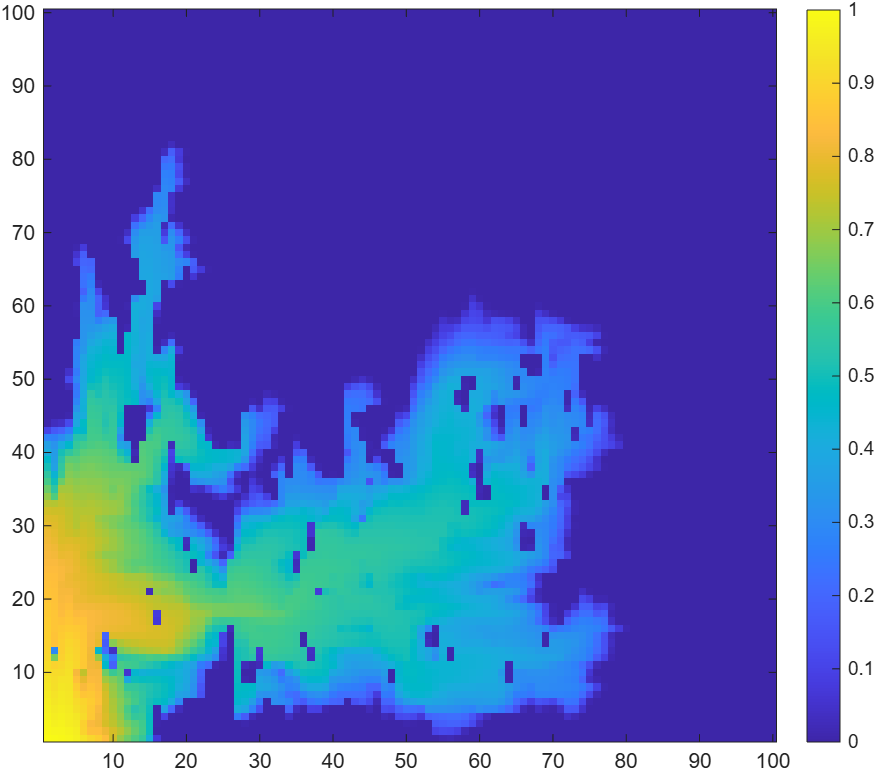}
        \caption{$\varepsilon=0.5$, $e_S=9.37\%$}
    \end{subfigure}

    \caption*{Figure~\thefignine. Multiscale solutions with $H=0.1$ and $3$ basis functions per coarse element for $\varepsilon=0.5$ at the corresponding $T=1,2,3$.}
    \label{fig:9_2}
\end{figure}

\refstepcounter{fignine}
\begin{figure}[H]
    \centering

    \begin{subfigure}{0.32\textwidth}
        \centering
        \includegraphics[width=\linewidth]{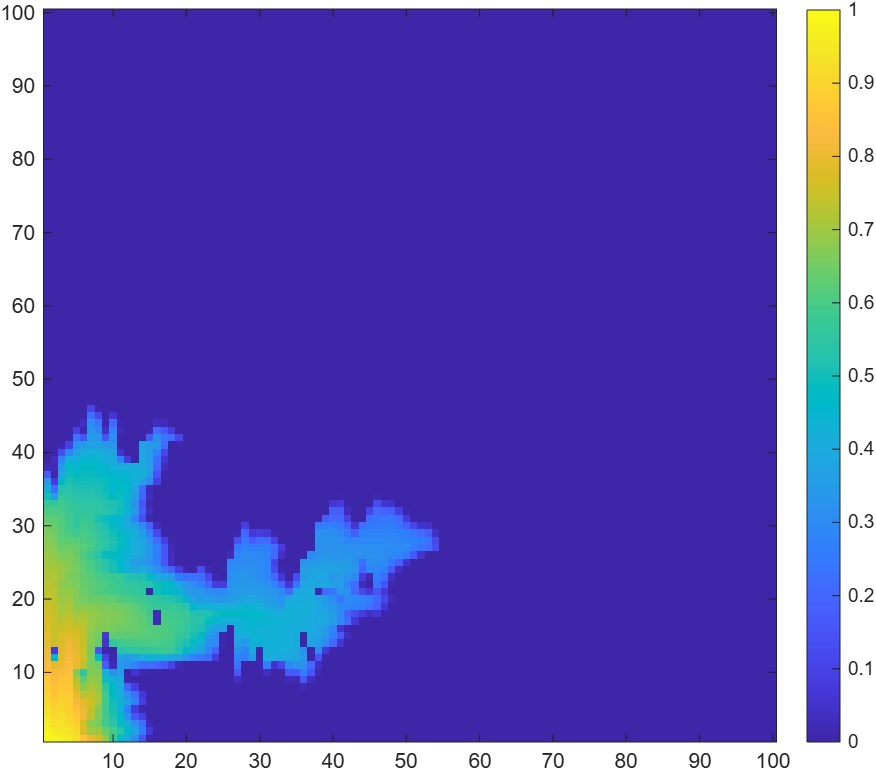}
        \caption{$\varepsilon=0.1$, $e_S=2.78\%$}
    \end{subfigure}
    \hfill
    \begin{subfigure}{0.32\textwidth}
        \centering
        \includegraphics[width=\linewidth]{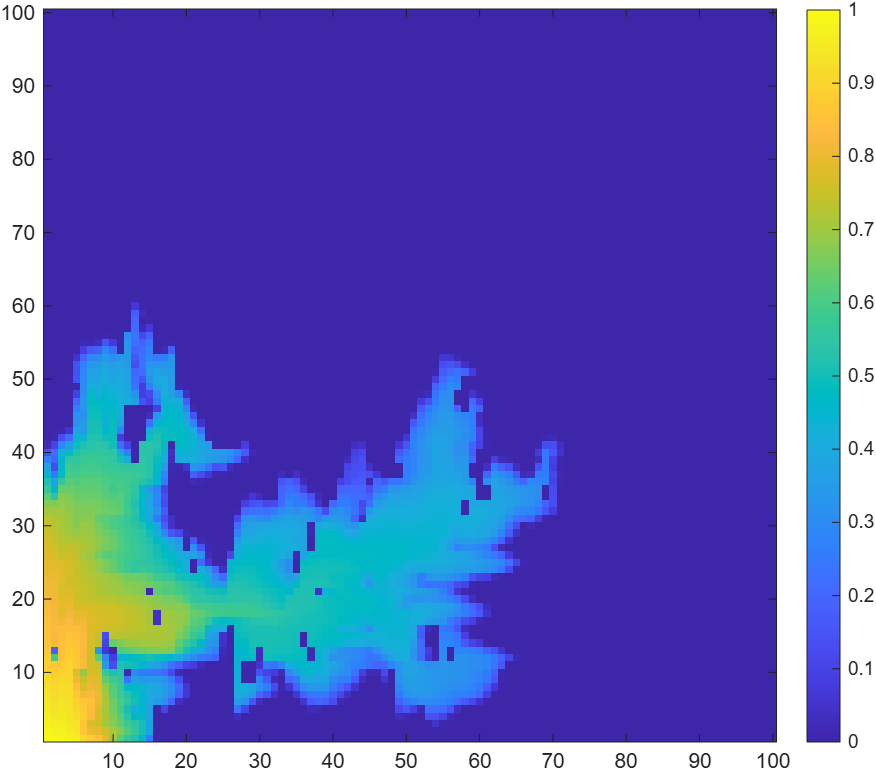}
        \caption{$\varepsilon=0.1$, $e_S=4.51\%$}
    \end{subfigure}
    \hfill
    \begin{subfigure}{0.32\textwidth}
        \centering
        \includegraphics[width=\linewidth]{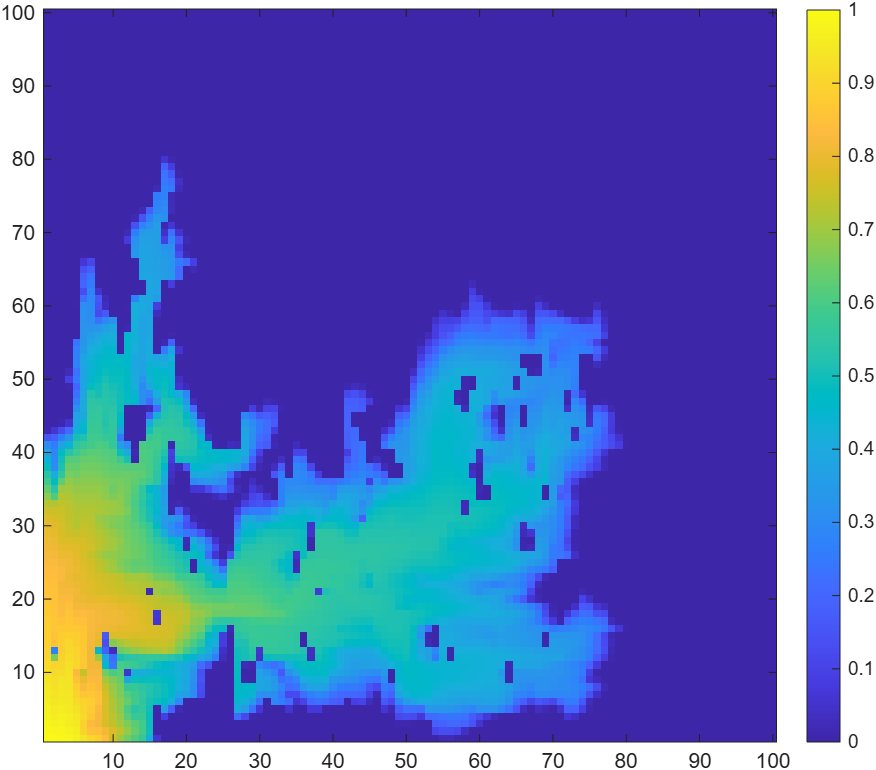}
        \caption{$\varepsilon=0.1$, $e_S=5.39\%$}
    \end{subfigure}

    \caption*{Figure~\thefignine. Multiscale solutions with $H=0.1$ and $3$ basis functions per coarse element for $\varepsilon=0.1$ at the corresponding $T=1,2,3$.}
    \label{fig:9_3}
\end{figure}

The numerical results show that the proposed adaptive multiscale method can provide accurate approximations for both permeability fields $\kappa_2$ and $\kappa_3$. In general, smaller coarse mesh size $H$ and more multiscale basis functions lead to smaller velocity and pressure errors. For the saturation equation, the numerical results show that the saturation error $e_S$ decreases as the adaptive tolerance $\varepsilon$ becomes smaller. This indicates that  updating of the multiscale space appropriately can improve the accuracy of the saturation approximation.

\section{Conclusion}
\label{sec 6}

In this paper, we developed an adaptive mixed CEM-GMsFEM for incompressible
and immiscible two-phase flow in heterogeneous porous media. The method combines with a physics-preserving IMPES scheme, and the multiscale spaces are
adaptively regenerated according to the variation of the effective permeability
induced by the saturation evolution. We proved the local mass conservation for
both phases, the unbiased property of the formulation, and the bounds-preserving
property under a suitable CFL condition. For the zero-capillary case, velocity
and saturation error estimates were derived, showing the influence of the
adaptive tolerance, coarse mesh size, spectral approximation error, and front
layer. Numerical results confirm the theoretical properties and show that the
capillary pressure enhances the diffusive nature of the advection-dominated
motion. Moreover, smaller adaptive tolerance generally leads to smaller
saturation errors, while suitable coarse mesh size and number of multiscale basis functions
provide accurate velocity and pressure approximations with reduced
computational cost.

\section*{Acknowledgement}

The research of Eric Chung is
partially supported by the Hong Kong
RGC General Research Fund
(Projects: 14305624 and 14304525).

\end{document}